\newtheorem{theo}{Theorem}[section]
\newtheorem{prop}[theo]{Proposition}
\newtheorem{coro}[theo]{Corollary}
\newtheorem{lemm}[theo]{Lemma}
\theoremstyle{definition}
\newtheorem{defi}[theo]{Definition}
\newtheorem*{rema*}{Remark}
\newtheorem*{claim*}{Claim}
\newtheorem{assump}{Assumption}
\newtheorem{notation}{Notation}
\author{Shota FUKUSHIMA\thanks{Graduate School of Mathematical Sciences, the University of Tokyo, 3-8-1 Komaba, Meguro-ku, Tokyo, 153-8914, Japan. 
Email: fukusima@ms.u-tokyo.ac.jp} \thanks{The author is supported by Leading Graduate Course for Frontiers of Mathematical Sciences and Physics (FMSP), at Graduate School of Mathematical Science, the University of Tokyo. }}
\title{$L^2$ boundedness of pseudodifferential operators on manifolds with ends}
\newcommand{\pprime}{{\prime\prime}}
\newcommand{\transp}[1]{{}^t\!{#1}}
\newcommand{\jbracket}[1]{\left\langle {#1} \right\rangle}
\newcommand{\Op}{\mathrm{Op}}
\begin{document}
\maketitle
\begin{abstract}
We investigate properties of pseudodifferential operators on $L^2$ space on manifold with ends including asymptotically conical or hyperbolic ends. Our pseudodifferential operators are a generalization of the canonical quantization which naturally appears in the quantum mechanics on curved spaces. We prove a Calder\'on-Vaillancourt type theorem for our pseudodifferential operators and discuss a construction of parametrix of elliptic differential operators on manifolds with ends. 
\end{abstract}

\section{Introduction}\label{def_manifold_with_ends}
We consider a class of non-compact manifolds with suitable properties: 
\begin{defi}
An $n$-dimensional connected oriented manifold $\mathcal{M}$ is called a \textit{manifold with ends} if there exists a compact subset $\mathcal{K}\subset \mathcal{M}$ such that $\mathcal{M}\setminus \mathrm{Int}\,\mathcal{K}$ is diffeomorphic as manifolds with boundaries to $[1, \infty)\times S$ for some $(n-1)$-dimensional oriented compact manifold $S$. Here $\mathrm{Int}\, \mathcal{K}$ is the interior of $\mathcal{K}$. 
$\mathcal{M}\setminus\mathcal{K}$ are called the \textit{ends} of $\mathcal{M}$. 
\end{defi}

If $\mathcal{M}\setminus \mathcal{K}$ is diffeomorphic to $(1, \infty)\times S$, we can consider the ``\textit{polar coordinates}'' $(r, \theta)\in (1, \infty)\times S$ using the local coordinates $\theta=(\theta^1, \ldots, \theta^{n-1})$ on $S$. 
This is defined as the image of local coordinates introduced below. We take an atlas on $\mathcal{M}$ which consists of finite local coordinates $\{\Psi_\iota: \mathcal{U}_\iota \to \mathcal{V}_\iota\}_{\iota\in I}$ in following way.  

\begin{enumerate}
\item Let $\mathcal{K}$ be a compact subset of $\mathcal{M}$ such that $\mathcal{M}\setminus\mathcal{K}$ forms the ends. We take a finite atlas $\{\Psi_\iota: \mathcal{U}_\iota\to \mathcal{V}_\iota\}_{\iota\in I_\mathcal{K}}$ on $\mathcal{K}$ where 
$\mathcal{U}_\iota$ and $\mathcal{V}_\iota$ are relatively compact open subsets of $\mathcal{M}$ and $\mathbb{R}^n$ respectively. 
\item Recall a diffeomorphism $\mathcal{M}\setminus\mathcal{K}\approx (1, \infty)\times S$. Since $S$ is a compact manifold, we can take a finite atlas $\{\Psi_\iota^\prime: \mathcal{U}_\iota^\prime\to \mathcal{V}_\iota^\prime\}_{\iota\in I_\infty}$. $\mathcal{U}_\iota^\prime$ and $\mathcal{V}_\iota^\prime$ are relatively compact open subsets of $S$ and $\mathbb{R}^{n-1}$ respectively. We set $\mathcal{U}_\iota:=(1, \infty)\times\mathcal{U}_\iota^\prime$, $\mathcal{V}_\iota:=(1, \infty)\times\mathcal{V}_\iota^\prime$ and $\Psi_\iota:=\mathrm{id}\times \Psi_\iota^\prime: \mathcal{U}_\iota\to \mathcal{V}_\iota$. 
\item Finally we put $I:=I_\mathcal{K}\cup I_\infty$. Obviously $I$ is a finite set. 
\end{enumerate}

We equip $\mathcal{M}$ with a metric $g$ with some boundedness conditions. 

\begin{assump}\label{assump_metric_M}
\begin{enumerate}
\item There exists a positive smooth function $\lambda: \mathcal{M}\to (0, \infty)$ such that, for each $\iota \in I_\infty$, $\lambda_\iota:=(\Psi_\iota^{-1})^*\lambda$ is a function dependent only on $r\in (1, \infty)$ and satisfies
\[
\partial_r^{j+1}\log \lambda_{\iota} \in L^\infty((1, \infty)) \quad \text{for all } j\geq 0. 
\] 
We also assume that $\inf_{x\in \mathcal{M}} \lambda(x)>0$. 
\item For each $\iota\in I_\infty$, the pullback of $g$ by $\Psi_\iota^{-1}: \mathcal{V}_\iota\to\mathcal{U}_\iota$ is the form
\begin{align*}
&(\Psi_\iota^{-1})^*g= \\
&G^\iota_{00}(r, \theta)dr^2+2\lambda_\iota(r)\sum_{j=1}^{n-1}G^\iota_{0j}(r, \theta)drd\theta^j+\lambda_\iota(r)^2\sum_{j, k=1}^{n-1}G^\iota_{jk}(r, \theta)d\theta^j d\theta^k. 
\end{align*}
$(G^\iota_{\mu\nu}(r, \theta))_{\mu, \nu=0}^{n-1}$ is a positive definite symmetric matrix with 
\begin{itemize}
\item $C^{-1}|v|^2\leq \sum_{\mu, \nu=0}^{n-1}G^\iota_{\mu\nu}(r, \theta)v^\mu v^\nu \leq C|v|^2$, for some $C>0$ independent of $(r, \theta)\in \mathcal{V}_\iota$ and $v\in \mathbb{R}^n$, 
\item For all $\mu$, $\nu\in \{0, \ldots, n-1\}$, integers $\alpha_0\in \mathbb{Z}_{\geq 0}$ and multiindices $\alpha\in \mathbb{Z}_{\geq 0}^{n-1}$, 
\[
(\lambda_\iota(r)^{-1}\partial_\theta)^\alpha \partial_r^{\alpha_0} G^\iota_{\mu\nu}(r, \theta)\in L^\infty(\mathcal{V}_\iota). 
\]
\end{itemize}
\end{enumerate}
\end{assump}

In following, we fix a Riemannian metric $g$ on $\mathcal{M}$ which satisfies Assumption \ref{assump_metric_M} and take $\lambda: \mathcal{M}\to (0, \infty)$ appearing there. 
This assumption includes not only the case of manifolds with conical ends $\lambda=r$ (for instance, the Euclidean space), but also that of manifolds with asymptotically hyperbolic ends $\lambda=\sinh r$ (for instance, the equivalent setting in Mazzeo-Melrose \cite{Mazzeo-Melrose87}). The above condition implies the global boundedness of the Riemann curvature tensor associated with $g$ and its covariant derivatives. 

Our main result is that we can construct parametrices of elliptic differential operators on $\mathcal{M}$. Let $\mathcal{P}: C^\infty(\mathcal{M})\to \mathcal{M})$ be a differential operator on $\mathcal{M}$. We assume that $\mathcal{P}$ is of the form 
\begin{equation}
(\Psi_\iota^{-1})^*\mathcal{P}\Psi_\iota^*=\sum_{\gamma_0+|\gamma|\leq m}a^\iota_{\gamma_0\gamma}(r, \theta)(i^{-1}\lambda_\iota^{-1}\partial_\theta)^\gamma (i^{-1}\partial_r)^{\gamma_0} \label{eq_P_local}
\end{equation}
on the coordinate neighborhood $\mathcal{U}_\iota$ ($\iota\in I_\infty$). $a^\iota_{\gamma_0\gamma}\in C^\infty(\mathcal{V}_\iota)$ are smooth functions such that, for all $(\alpha_0, \alpha)\in \mathbb{Z}_{\geq 0}\times \mathbb{Z}_{\geq 0}^{n-1}$, 
\[
(\lambda_\iota^{-1}\partial_\theta)^\alpha \partial_r^{\alpha_0}a^\iota_{\gamma_0\gamma}\in L^\infty(\mathcal{V}_\iota). 
\]
We denote the set consisting of such $\mathcal{P}$ by $\mathrm{Diff}^m(\mathcal{M}, \lambda)$. The principal symbol $\sigma(\mathcal{P}): T^*\mathcal{M}\to \mathbb{C}$ is 
\[
\sigma(\mathcal{P})(\tilde\Psi_\iota^{-1}(r, \theta, \rho, \eta))=\sum_{\gamma_0+|\gamma|= m}a^\iota_{\gamma_0\gamma}(r, \theta)(\lambda_\iota^{-1}\eta)^\gamma \rho^{\gamma_0} 
\]
if $(\Psi_\iota^{-1})^*\mathcal{P}\Psi_\iota^*$ is given by (\ref{eq_P_local}). Here $\tilde\Psi_\iota: T^*\mathcal{U}_\iota\to \mathcal{V}_\iota\times \mathbb{R}\times \mathbb{R}^{n-1}$ is the canonical coordinates associated with $\Psi_\iota: \mathcal{U}_\iota\to \mathcal{V}_\iota$. 
We call $\mathcal{P}\in \mathrm{Diff}^m(\mathcal{M}, \lambda)$ elliptic if, for all $z\in\mathbb{C}$ with $\mathrm{dist}(z, \sigma(\mathcal{P})(T^*\mathcal{M}))>0$, there exists a constant $C>0$ such that the inequality
\[
C^{-1}(1+|\xi|_{g^{-1}})^m\leq |z-\sigma(\mathcal{P})(x, \xi)| \leq C(1+|\xi|_{g^{-1}})^m
\]
holds for all $(x, \xi)\in T^*\mathcal{M}$. Here $g^{-1}$ is defined as $|\alpha|_{g^{-1}}^2:=\sum_{\mu, \nu} g^{\mu\nu}(x)\alpha_\mu \alpha_\nu$ if $\alpha=\sum_\mu \alpha_\mu dx^\mu\in T^*_x\mathcal{M}$, where $(g^{\mu\nu})$ is the inverse matrix of the metric tensor $(g_{\mu\nu})$. 

In the statement below, $L^2(\mathcal{M}, g)$ is the space of square integrable functions on $\mathcal{M}$ with respect to the measure $\sqrt{\det (g_{\mu\nu})}dx^1\cdots dx^n$ naturally defined by the metric $g$. 

\begin{theo}\label{thm_parametrix}
Let $\mathcal{P}\in \mathrm{Diff}^m(\mathcal{M}, \lambda)$ be an elliptic differential operator. Then, for each integer $N\geq 0$, we can construct $L^2(\mathcal{M}, g)$ bounded operators $\mathcal{Q}_N(z)$ such that 
\[
(z-\mathcal{P})\mathcal{Q}_N(z)u=u+\mathcal{R}_{N+1}(z)u
\]
for all $u\in C_c^\infty(\mathcal{M})$ and $z\in\mathbb{C}$ with $\delta(z, \sigma(\mathcal{P})):=\mathrm{dist}(z, \sigma(\mathcal{P})(T^*\mathcal{M}))>0$. $\mathcal{R}_{N+1}(z)$ are bounded operators on $L^2(\mathcal{M}, g)$ and 
\begin{align*}
&\|\mathcal{R}_{N+1}(z)\|_{L^2(\mathcal{M}, g)\to L^2(\mathcal{M}, g)} \\
&\leq
C\delta(z, \sigma(\mathcal{P}))^{-\frac{N+1}{m}}
\sum_{l=0}^{N+M}\sup_{\substack{(x, \xi)\in \\ T^*\mathcal{M}}}\left(\frac{(1+|\xi|_{g^{-1}})^m}{|z-\sigma(\mathcal{P})(x, \xi)|}\right)^{l+1-\frac{N+1}{m}}
\end{align*}
for some $C=C_N>0$ and $M\in \mathbb{N}$ independent of $z$ ($M$ is also independent of $N$). 
$\mathcal{Q}_N(z)$ and $\mathcal{R}_{N+1}(z)$ map $C_c^\infty(\mathcal{M})$ to $C^\infty(\mathcal{M})$ continuously. 
\end{theo}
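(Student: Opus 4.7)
The plan is the standard symbolic parametrix construction, adapted to the symbol class $\Psi^\mu$ on $\mathcal{M}$ that underlies the Calder\'on-Vaillancourt theorem established earlier in the paper. Recall that on an end chart, $\Psi^\mu$ consists of symbols $a(r,\theta,\rho,\eta)$ with
\[
|\partial_\rho^{\beta_0}(\lambda_\iota\partial_\eta)^\beta\partial_r^{\alpha_0}(\lambda_\iota^{-1}\partial_\theta)^\alpha a|\leq C_{\alpha_0\alpha\beta_0\beta}(1+|\xi|_{g^{-1}})^{\mu-\beta_0-|\beta|},
\]
with an analogous unweighted version on the compact charts, and with a composition formula that admits a finite asymptotic expansion in decreasing orders.

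I would first put $q_0(x,\xi;z):=(z-\sigma(\mathcal{P})(x,\xi))^{-1}$, which is a well-defined smooth function because $\delta(z,\sigma(\mathcal{P}))>0$. A Fa\`a di Bruno induction on the weighted derivatives gives the quantitative bound
\[
|\partial_\rho^{\beta_0}(\lambda_\iota\partial_\eta)^\beta\partial_r^{\alpha_0}(\lambda_\iota^{-1}\partial_\theta)^\alpha q_0|\leq C\sum_{k=0}^{K}\left(\frac{(1+|\xi|_{g^{-1}})^m}{|z-\sigma(\mathcal{P})|}\right)^{k+1}(1+|\xi|_{g^{-1}})^{-m-\beta_0-|\beta|},
\]
with $K=K(\alpha_0,|\alpha|,\beta_0,|\beta|)$, so $q_0\in\Psi^{-m}$ and the ratio $(1+|\xi|_{g^{-1}})^m/|z-\sigma(\mathcal{P})|$ appearing in the statement is already visible. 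Correction symbols $q_j\in\Psi^{-m-j}$ ($j=1,\ldots,N$) are then constructed recursively so that the left-symbol expansion of $(z-\mathcal{P})\circ\mathrm{Op}(q_0+\cdots+q_N)$ equals $1$ up to order $-N-1$. Since $\mathcal{P}$ is a differential operator, the composition expansion is finite at each order, and each transport equation has the explicit algebraic solution $q_j=q_0\cdot R_j$ where $R_j$ is a polynomial in $\lambda_\iota$-weighted derivatives of the full symbol of $\mathcal{P}$ and of $q_0,\ldots,q_{j-1}$. The estimate above propagates inductively to each $q_j$.

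Setting $\mathcal{Q}_N(z):=\mathrm{Op}(q_0+\cdots+q_N)$, the operator $\mathcal{R}_{N+1}(z):=(z-\mathcal{P})\mathcal{Q}_N(z)-\mathrm{Id}$ is by construction the quantization of a symbol in $\Psi^{-N-1}$. Applying the Calder\'on-Vaillancourt theorem with its fixed seminorm count $M$ bounds $\|\mathcal{R}_{N+1}(z)\|_{L^2\to L^2}$ by a finite sum of terms of the form $(\mathrm{ratio})^{l+1}(1+|\xi|_{g^{-1}})^{-N-1}$, and the factor $\delta(z,\sigma(\mathcal{P}))^{-(N+1)/m}$ is extracted using the elementary identity
\[
(1+|\xi|_{g^{-1}})^{-N-1}=|z-\sigma(\mathcal{P})|^{-(N+1)/m}\left(\frac{(1+|\xi|_{g^{-1}})^m}{|z-\sigma(\mathcal{P})|}\right)^{-(N+1)/m}\leq \delta(z,\sigma(\mathcal{P}))^{-(N+1)/m}\left(\frac{(1+|\xi|_{g^{-1}})^m}{|z-\sigma(\mathcal{P})|}\right)^{-(N+1)/m},
\]
which converts each term to $\delta(z,\sigma(\mathcal{P}))^{-(N+1)/m}(\mathrm{ratio})^{l+1-(N+1)/m}$ and produces the stated bound.

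The main obstacle is the quantitative bookkeeping of the ratio powers: one must verify that the $\lambda_\iota$-weighted derivative structure is preserved at every step of the recursion (guaranteed by Assumption~\ref{assump_metric_M} and the hypothesis on the coefficients $a^\iota_{\gamma_0\gamma}$), and track how many powers of the ratio arise in the seminorms of $\mathcal{R}_{N+1}(z)$ so that the maximal index $l$ in the sum is bounded by $N+M$ with $M$ independent of $N$. The regularity claim $\mathcal{Q}_N(z),\mathcal{R}_{N+1}(z):C_c^\infty(\mathcal{M})\to C^\infty(\mathcal{M})$ then follows from pseudolocality and Schwartz kernel regularity of the quantization established in the earlier sections.
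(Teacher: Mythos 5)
Your proposal is the right conceptual route and matches what the paper does at heart: an iterative algebraic construction of the symbols $b_0(z),\ldots,b_N(z)$ with $b_0(z)$ proportional to $(z-\sigma(\mathcal{P}))^{-1}$, quantitative tracking of the powers of the ratio $(1+|\xi|_{g^{-1}})^m/|z-\sigma(\mathcal{P})|$ (your extraction of $\delta(z,\sigma(\mathcal{P}))^{-(N+1)/m}$ is exactly the Step-5 manipulation in the paper's Proposition~\ref{prop_parametrix}), and then an appeal to the Calder\'on--Vaillancourt theorem to convert the seminorm bound on the remainder symbol into an $L^2$ operator bound. The regularity claim indeed comes from the mapping property established earlier (Proposition~\ref{prop_smoothness}), not from pseudolocality, but this is a minor misattribution.

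The genuine gap is that you set $\mathcal{Q}_N(z):=\mathrm{Op}(q_0+\cdots+q_N)$ as though a global quantization $\mathrm{Op}$ on $\mathcal{M}$ were available, with a global composition formula whose expansion can be truncated. The paper never defines a global symbol class on $T^*\mathcal{M}$ or a global quantization; all of its pseudodifferential machinery (the classes $S^m_\sigma(\lambda)$, $BS^m_\sigma(\lambda)$, the quantization $\Op^{g,t}$, the composition Proposition~\ref{prop_composition_do_pdo}, and the local parametrix Proposition~\ref{prop_parametrix}) lives on $\mathbb{R}^n$ in a single chart. Because of this, the actual proof must glue: it fixes a partition of unity $\{\kappa_\iota\}_{\iota\in I}$ normalized so that $\sum_\iota\kappa_\iota^2=1$, builds for each $\iota$ a local parametrix $\mathcal{Q}^\iota_N(z)=\Psi_\iota^*\Op^{g_\iota}(b^\iota_N(z))\tilde\kappa_\iota(\Psi_\iota^{-1})^*$ where $b^\iota_N(z)$ is supported in $\mathrm{supp}(\tilde\kappa_\iota)\times\mathbb{R}^n$, verifies $(z-\mathcal{P})\mathcal{Q}^\iota_N(z)u=\tilde\kappa_\iota^2 u+\mathcal{R}^\iota_{N+1}(z)u$ on $C_c^\infty$, and then sums over $\iota$. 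The cutoff $\chi$ in Proposition~\ref{prop_parametrix} is precisely what makes this patching work: it localizes each parametrix so the composition with the global differential operator $\mathcal{P}$ makes sense chart by chart, and the choice $\sum\kappa_\iota^2=1$ recovers the identity without commutator cross-terms. This step is not just bookkeeping; it is the argument that lets the paper avoid ever proving a composition or change-of-variables theorem for its (nonstandard, possibly non-temperate) calculus. You should either carry out this gluing explicitly or state that you are developing a global calculus with composition expansion, which would be additional work beyond what is already in the paper.
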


In order to construct parametrices, we introduce a suitable class of pseudodifferential operators in order to investigate such differential operators in polar coordinates. 

\begin{notation}
It is convenient to introduce shorthand notations 
\[
q=(r, \theta)\in \mathbb{R}^n=\mathbb{R}\times \mathbb{R}^{n-1}
\]
and for the dual variable $(\rho, \eta)\in \mathbb{R}\times\mathbb{R}^{n-1}$ 
\[
p=(\rho, \eta)=\rho\oplus\eta\in \mathbb{R}^n=\mathbb{R}\times\mathbb{R}^{n-1}. 
\]
We also introduce notations
\[
\jbracket{p}:=(1+|p|^2)^\frac{1}{2}
\]
and
\[
\jbracket{\rho\oplus\eta}:=(1+\rho^2+|\eta|^2)^\frac{1}{2}. 
\]

We always split a multiindex as $A=(\alpha_0, \alpha)\in \mathbb{Z}_{\geq 0}\times\mathbb{Z}_{\geq 0}^{n-1}$. The length $|A|$ is defined as
\[
|A|=\alpha_0+|\alpha|=\alpha_0+\alpha_1+\cdots +\alpha_{n-1}
\]
if $\alpha=(\alpha_1, \ldots. \alpha_{n-1})$. 
We denote
\[
\partial_q^A=\partial_r^{\alpha_0}\partial_\theta^\alpha, \, \partial_p^B=\partial_\rho^{\beta_0}\partial_\eta^\beta. 
\]
\end{notation}

First we define a symbol class. 
\begin{defi}
Let $m\in \mathbb{R}$, $\sigma\in [0, 1]$. We fix a positive smooth function $\lambda: \mathbb{R}\to (0, \infty)$ satisfying the property 
\begin{equation}
C^{-1}\leq \frac{\lambda (r)}{\lambda(r^\prime)}\leq C \label{assump_lambda}
\end{equation}
for some $C>0$ independent of $r$, $r^\prime\in \mathbb{R}$ such that $|r-r^\prime|\leq 1$. 
This assumption also permits $\lambda(r)$ with $\inf \lambda(r)=0$ or without smoothness, for instance, $\lambda(r)=e^{-|r|}$. 

We define symbol classes $S^m_\sigma(\lambda)$ by the following condition: 
a smooth function $a\in C^\infty(\mathbb{R}^{2n})$ is in $S^m_\sigma(\lambda)$ if, for all $M\geq 0$, there exists a constant $C_M>0$ such that 
\[
|(\lambda(r)^{-1}\partial_\theta)^\alpha (\lambda(r)\partial_\eta)^\beta \partial_r^{\alpha_0} \partial_\rho^{\beta_0}a(q, p)|\leq C_M\jbracket{\rho\oplus \lambda(r)^{-1}\eta}^{m-\sigma |B|}
\]
for all $(q, p)=(r, \theta, \rho, \eta)\in \mathbb{R}^{2n}$ and $A=(\alpha_0, \alpha)$, $B=(\beta_0, \beta)\in \mathbb{Z}_{\geq 0}^n$ with $|A|+|B|\leq M$. 

The seminorms $|a|_{\lambda: M, m, \sigma}$ are defined by 
\begin{align*}
&|a|_{\lambda: M, m, \sigma} \\
&:=
\sum_{|A|+|B|\leq M}\|\jbracket{\rho\oplus \lambda(r)^{-1}\eta}^{-m+\sigma |B|}(\lambda(r)^{-1}\partial_\theta)^\alpha (\lambda(r)\partial_\eta)^\beta \partial_r^{\alpha_0} \partial_\rho^{\beta_0}a\|_{L^\infty(\mathbb{R}^{2n})}. 
\end{align*}
\end{defi}

\begin{rema*}
Our symbol class $S^m_\sigma(\lambda)$ is represented as $S(\jbracket{\rho\oplus \lambda^{-1}\eta}^m, \tilde g_\sigma)$, where 
\[
\tilde g_\sigma:=dr^2+\lambda(r)^2\sum_{j=1}^{n-1} (d\theta^j)^2+\jbracket{\rho\oplus \lambda(r)^{-1}\eta}^{-2\sigma}\left(d\rho^2+\lambda(r)^{-2}\sum_{j=1}^{n-1} (d\eta_j)^2\right), 
\]
in H\"ormander's notation \cite{Hormander85}. 
Since we permit the exponential increase for $\lambda(r)$, the metric $\tilde g_\sigma$ is not necessarily \textit{temperate}, that is, roughly speaking, the coefficients of the metric tensor are at most polynomially increasing. 
\end{rema*}

Next we define pseudodifferential operators with symbols in $S^m_\sigma(\lambda)$. We fix a positive function $g: \mathbb{R}^n\to (0, \infty)$. A typical example is $g(q)=\det (g_{\mu\nu}(q))_{\mu, \nu}$. In this case, the volume density associated with the metric tensor $(g_{\mu\nu})$ is $g(q)^{1/2}dq$. 

\begin{defi}
A pseudodifferential operator $\Op^{g, t}(a)$ with symbol $a\in S^m_\sigma(\lambda)$ for some $t\in [0, 1]$, $m\in \mathbb{R}$ and $\sigma\in [0, 1]$ is 
\begin{align*}
&\Op^{g, t}(a)u(q) \\
&:=\frac{1}{(2\pi)^n}g(q)^{-\frac{1}{4}}\int_{\mathbb{R}^n}dp \int_{\mathbb{R}^n}dq^\prime \, a\left(tq+(1-t)q^\prime, p\right)e^{ip\cdot (q-q^\prime)}u(q^\prime)g(q^\prime)^\frac{1}{4}
\end{align*}
\end{defi}

\begin{rema*}
\begin{itemize}
\item Our pseudodifferential operators are not necessarily properly supported. We can find the microlocal analysis on asymptotically hyperbolic manifolds by properly supported pseudodifferential operators in Bouclet \cite{Bouclet11} for instance. He used them in order to prove the Strichartz estimates on manifolds with asymptotically hyperbolic ends \cite{Bouclet13}. 
\item In the model case $g(q)=\det (g_{\mu\nu}(q))_{\mu, \nu}$, it is natural to regard $\Op^{g, t}(a)$ as an operator acting on half-densities $u|\mathrm{vol}_g|^{1/2}$ (see Guillemin-Sternberg \cite{Guillemin-Sternberg13} for more details on half-densities) as 
\[
\Op^{g, t}(a)(u|\mathrm{vol}_g|^\frac{1}{2})=(\Op^{g, t}(a)u)|\mathrm{vol}_g|^\frac{1}{2}. 
\]
Here $\mathrm{vol}_g$ is a volume form associated with the metric $g$ and $|\mathrm{vol}_g|^{1/2}$ is the non-vanishing section of the vector bundle of half-densities on $\mathbb{R}^n$ defined by $\mathrm{vol}_g$. The integral kernel of $\Op^{g, t}(a)$ as an operator on half-densities is 
\[
\frac{1}{(2\pi)^n}\int_{\mathbb{R}^n} a(tq+(1-t)q^\prime, p)e^{ip\cdot (q-q^\prime)}\, dp |dq|^\frac{1}{2}|dq^\prime|^\frac{1}{2} 
\]
in the distributional sense. One can interpret ``$dp|dq|^{1/2}|dq^\prime|^{1/2}\approx dp dq$'' near diagonal. The right hand side $dpdq$ is the Liouville measure associated with the canonical symplectic structure on $T^*\mathbb{R}^n$. 
\item One can regard our pseudodifferential operators as a generalization of the canonical quantization on curved spaces \cite{DeWitt57}. Although $q_j$ and $\eta_j$ do not belong to $S^m_\sigma(\lambda)$ in general, we can calculate their quantization directly and obtain
\[
\Op^{g, t}(q_j)u(q)=q_ju(q), \quad \Op^{g, t}(p_j)u(q)=\frac{1}{i}\partial_{q_j}u(q)+\frac{1}{4i}(\log g(q))u(q) 
\]
for fundamental observables $q$ and $p$. 

As operators on half-densities, $\Op^{g, t}(p_j)$ are the Lie derivative operators on half-densities: 
\[
\Op^{g, t}(p_j)(u|\mathrm{vol}_g|^\frac{1}{2})=\frac{1}{i}\mathcal{L}_{\partial_{q_j}}(u|\mathrm{vol}_g|^\frac{1}{2}). 
\]
\end{itemize}
\end{rema*}

We prove and use the Calder\'on-Vaillancourt type $L^2$ boundedness theorem \cite{Calderon-Vaillancourt71} for the estimate of our pseudodifferential operators. The difficulty in treating our pseudodifferential operators is that it is unknown whether we can obtain the off-diagonal decay of the integral kernel of them in the case of $\lambda$ with exponential increasing. The simplest version of $L^2$ boundedness theorem is as follows. 

\begin{theo}[$L^2$ boundedness]\label{thm_L2_bdd_simplest}
Let $\lambda: \mathbb{R}\to (0, \infty)$ be a function satisfying (\ref{assump_lambda}). Then there exist  constants $C>0$ and $M\geq 0$ such that, for all $a\in S^0_0(\lambda)$, all $u\in C_c^\infty(\mathbb{R}^n)$ and all $t\in [0, 1]$, the inequality
\[
\|\Op^{g, t}(a)u\|_{L^2(g^{1/2}dq)}\leq C|a|_{\lambda: M, 0, 0}\|u\|_{L^2(g^{1/2}dq)}
\]
holds. 
\end{theo}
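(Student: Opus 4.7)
The strategy has three stages: (i) a unitary reduction that eliminates the volume density $g$; (ii) a slab-by-slab rescaling that converts our symbol into a uniformly bounded family in the standard H\"ormander class $S^0_{0,0}(\mathbb{R}^{2n})$, so that the classical Calder\'on--Vaillancourt theorem applies; (iii) a Cotlar--Stein almost-orthogonality argument to sum the slab contributions.

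\emph{Step 1 (reduction to $L^2(dq)$).} Multiplication by $g^{1/4}$ is a unitary isomorphism $U:L^2(g^{1/2}dq)\to L^2(dq)$, and a direct inspection of the definition of $\Op^{g,t}(a)$ shows that $U\Op^{g,t}(a)U^{-1}=\Op^t(a)$, the standard $t$-quantization on $L^2(\mathbb{R}^n, dq)$. So the density $g$ disappears from the problem and it suffices to prove the estimate for $\Op^t(a)$.

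\emph{Step 2 (rescaling on unit $r$-slabs).} Choose $\phi\in C_c^\infty((-1,1))$ with $\sum_{k\in\mathbb{Z}}\phi(r-k)\equiv 1$ and set $a_k(q,p):=\phi(r-k)a(q,p)$, $A_k:=\Op^t(a_k)$. On $\operatorname{supp} a_k$ hypothesis (\ref{assump_lambda}) gives $\lambda(r)\sim\lambda(k)$. The linear symplectic change of coordinates $\tilde\theta=\lambda(k)\theta$, $\tilde\eta=\lambda(k)^{-1}\eta$ is implemented on $L^2$ by the unitary dilation $V_ku(r,\tilde\theta):=\lambda(k)^{-(n-1)/2}u(r,\lambda(k)^{-1}\tilde\theta)$, and a direct computation yields $V_kA_kV_k^{-1}=\Op^t(\tilde a_k)$ where
\[
\tilde a_k(r,\tilde\theta,\rho,\tilde\eta):=\phi(r-k)\,a(r,\lambda(k)^{-1}\tilde\theta,\rho,\lambda(k)\tilde\eta).
\]
Because $\partial_{\tilde\theta}=\lambda(k)^{-1}\partial_\theta$ and $\partial_{\tilde\eta}=\lambda(k)\partial_\eta$ act on the original $a$, and $\lambda(r)\sim\lambda(k)$ on $\operatorname{supp}\phi(r-k)$, the $S^0_0(\lambda)$-bounds on $a$ translate into uniform standard $S^0_{0,0}(\mathbb{R}^{2n})$-bounds on $\tilde a_k$ controlled by $|a|_{\lambda:M,0,0}$. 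The classical Calder\'on--Vaillancourt theorem (valid for all $t\in[0,1]$) gives
\[
\|A_k\|_{L^2(dq)\to L^2(dq)}\le C\,|a|_{\lambda:M,0,0}
\]
uniformly in $k$ and $t$.

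\emph{Step 3 (Cotlar--Stein).} Sum $\Op^t(a)=\sum_k A_k$ via the Cotlar--Stein lemma, which reduces matters to the almost-orthogonality estimate
\[
\|A_j^*A_k\|+\|A_jA_k^*\|\le C_N\jbracket{j-k}^{-2N}\qquad(\text{all }N).
\]
Since $\operatorname{supp} a_j$ and $\operatorname{supp} a_k$ are disjoint in the $r$-direction once $|j-k|\ge 3$, writing the composed kernel as an oscillatory integral and integrating by parts repeatedly in the variable $\rho$ (the dual to $r$, which carries no $\lambda$-weight) produces the factor $\jbracket{j-k}^{-N}$, with the extra $\rho$-derivatives landing harmlessly on $a_j,a_k$ and being absorbed by a fixed $S^0_0(\lambda)$-seminorm of $a$. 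Cotlar--Stein then delivers $\|\Op^t(a)\|\le C|a|_{\lambda:M,0,0}$ with $M$ depending only on $n$.

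\emph{Main obstacle.} Step 3 is the delicate point: because $\tilde g_0$ is not a temperate metric when $\lambda$ grows exponentially, H\"ormander's general Weyl calculus is unavailable off the shelf, and the almost orthogonality must be proved directly from the defining oscillatory integrals for the composed kernels. The decisive observation that makes this work is that all the integrations by parts needed for the $\jbracket{j-k}^{-N}$ decay can be carried out solely in $\rho$, so the anisotropic $\theta,\eta$-scales at $\lambda(j)$ and $\lambda(k)$---potentially exponentially different---never interfere with the decay estimate. Uniformity in $t\in[0,1]$ introduces no further difficulty since every bound only uses that $tq+(1-t)q'$ lies in the convex hull of $q$ and $q'$.
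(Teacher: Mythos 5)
Your Steps 1 and 2 are fine and match the paper's reduction to $L^2(dq)$ and the per-slab rescaling. The gap is in Step 3, precisely at the point you single out in your ``Main obstacle'' paragraph and claim is harmless.

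Take $t=1$ for concreteness. Then $A_j^*A_k=0$ once $|j-k|\ge 2$, which is good, but $A_jA_k^*=\Op(b_{jk})$ with bisymbol
\[
b_{jk}(q,p,q'')=\phi(r-j)\phi(r''-k)\,a(q,p)\,\overline{a(q'',p)},
\]
supported where $r\approx j$ and $r''\approx k$. Integrating by parts in $\rho$ indeed produces the factor $|r-r''|^{-N}\lesssim\jbracket{j-k}^{-N}$ and the extra $\rho$-derivatives are harmless. But a pointwise oscillatory-integral bound on the composed kernel is not an $L^2\to L^2$ bound: you must still feed $b_{jk}$ into the Calder\'on--Vaillancourt theorem, which requires uniform control of $\theta$, $\theta''$ and $\eta$ derivatives of the composed bisymbol. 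Those derivatives are anisotropic at two incompatible scales: $\partial_\theta$ on $a(q,p)$ costs a factor $\lambda(j)$, $\partial_{\theta''}$ on $\overline{a(q'',p)}$ costs $\lambda(k)$, while $\partial_\eta$ costs $\lambda(j)^{-1}$ on one factor and $\lambda(k)^{-1}$ on the other. Any admissible rescaling of $\Op(b_{jk})$ must apply the same dilation to the output $\theta$ and input $\theta''$ (and the reciprocal to $\eta$) — otherwise the conjugated object is no longer a pseudodifferential operator and the unitarity argument breaks — and a single scale $\mu$ cannot simultaneously normalize $\lambda(j)/\mu$, $\lambda(k)/\mu$, $\mu/\lambda(j)$, and $\mu/\lambda(k)$ when $\lambda(j)\neq\lambda(k)$. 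Condition (\ref{assump_lambda}) only gives $\lambda(j)/\lambda(k)\le C^{|j-k|}$, which can be genuinely exponential, so after the best possible rescaling the Calder\'on--Vaillancourt constant for $b_{jk}$ can be of order $C^{|j-k|}\jbracket{j-k}^{-N}$, which does not decay for any $N$. The same obstruction persists for general $t\in[0,1]$. In short, integration by parts in $\rho$ alone yields a polynomial factor, but the composed bisymbol carries an exponential mismatch between the two slabs that no polynomial gain can absorb.

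The paper avoids this by using a \emph{doubly}-indexed decomposition $\Op(a)=\sum_{j,k}\psi_j\Op(a)\psi_k$, with cutoffs on the input and output of the operator (not on the symbol), and a jointly-indexed scaling $\lambda^t_{jk}=\lambda(tj+(1-t)k)$. With two-sided cutoffs the Cotlar--Stein products $A_{jk}A_{lm}^*$ and $A_{jk}^*A_{lm}$ vanish identically except for finitely many neighbouring indices (since $\psi_k\psi_m=0$ for $|k-m|\geq 2$), so almost-orthogonality is automatic. What is left is the decay of a \emph{single} block $\|\psi_j\Op(a)\psi_k\|$, and after conjugating by the $(j,k)$-adapted $U^t_{jk}$ that block becomes a standard bisymbol in $BS^0_0(1)$ (Lemma \ref{lemm_scaling_bisymbols}) squeezed between two cutoffs with disjoint supports, to which the standard disjoint-support estimate applies. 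Inside any one block there is a single $\lambda$-scale, never two, which is exactly the structural point your proposal is missing.
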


We prove $L^2$ boundedness theorem for bounded bisymbols in Theorem \ref{thm_L2_bdd_bi}. Theorem \ref{thm_L2_bdd_simplest} above is a special case of the $L^2$ boundedness theorem for bisymbols. The method of proof is a scaling. This enables us to avoid the argument of off-diagonal decay of integral kernels. The bisymbols are introduced in Section \ref{sec_bisymbol} and the proof of $L^2$ boundedness is in Section \ref{sec_L2bdd}. 
In Section \ref{sec_parametrix} we investigate a composition of differential operators and pseudodifferential operators in local coordinates. In the final section \ref{sec_ess_selfadj}, we prove the main theorem \ref{thm_parametrix} and  we investigate the essential self-adjointness of symmetric differential operators on manifolds with ends as an application. 

\begin{rema*}
Our setting is related to manifolds with bounded geometry. A complete Riemannian manifold has bounded geometry if and only if it has the positive injectivity radius and its Riemannian curvature tensor and its covariant derivative are globally bounded. Analysis on manifolds with bounded geometry is in Grosse-Schneider \cite{Grosse-Schneider13}, Shubin \cite{Shubin92}, for example. 

Geometric definition of pseudodifferential operators on manifolds is also investigated recently. We can find some of them in Derezi\'nski-Latosi\'nski-Siemssen \cite{Derezinski-Latosinski-Siemssen20}, Levy \cite{Levy}. 
\end{rema*}

\section{Bisymbols}\label{sec_bisymbol}

\begin{defi}
Let $m\in\mathbb{R}$, $\sigma\in[0, 1]$ and $t\in[0, 1]$. 
A smooth function $a: \mathbb{R}^{3n} \to \mathbb{C}$ is an element of $BS^m_\sigma(\lambda; t)$ if and only if, for all indices $A=(\alpha_0, \alpha), B=(\beta_0, \beta)$ and $A^\prime=(\alpha_0^\prime, \alpha^\prime)\in \mathbb{Z}_{\geq 0}\times\mathbb{Z}_{\geq0}^{n-1}$, there exists a constant $C_{ABA^\prime t}>0$ such that
\[
|\partial_{q, l t}^A\partial_{p, l t}^B\partial_{q^\prime, l t}^{A^\prime}a(q, p, q^\prime)|\leq C_{ABA^\prime t}\jbracket{\rho\oplus (\lambda^t_{jk})^{-1}\eta}^{m-\sigma|B|}
\]
for all $|r-j|\leq 1$, $|r^\prime-k|\leq 1$ and $l=(j, k) \in \mathbb{Z}^2$. Here we introduced shorthand notations
\[
\partial_{q, l t}^A:=\partial_r^{\alpha_0}((\lambda^t_{jk})^{-1}\partial_\theta)^\alpha, 
\partial_{p, l t}^B:=\partial_\rho^{\beta_0}(\lambda^t_{jk}\partial_\eta)^\beta, 
\partial_{q^\prime, l t}^{A^\prime}:=\partial_{r^\prime}^{\alpha_0^\prime}((\lambda^t_{jk})^{-1}\partial_{\theta^\prime})^{\alpha^\prime}, 
\]
and 
\[
\lambda^t_{jk}:=\lambda(tj+(1-t)k). 
\]

We introduce seminorms $\{|\cdot|_{m, \sigma, M, t}\}_{M=0}^\infty$ on $BS^m_\sigma(\lambda; t)$ by
\begin{align*}
&|a|_{\lambda: M, m, \sigma, t} \\
&:=\sum_{\substack{|A|+|B|+|A^\prime| \\ \leq M}} \sup_{\substack{l=(j, k) \\ \in\mathbb{Z}^2}} \sup_{\substack{q, p, q^\prime \\ |r-j|\leq 1 \\ |r^\prime-k|\leq 1}}
\jbracket{\rho\oplus (\lambda^t_{jk})^{-1}\eta}^{-m+\sigma|B|}|\partial_{q, l t}^A\partial_{p, l t}^B\partial_{q^\prime, l t}^{A^\prime}a(q,p, q^\prime)|. 
\end{align*}
If $\lambda$ is clear from the context, we omit writing the letter $\lambda$ from $|a|_{\lambda: M, m, \sigma, t}$ such as $|a|_{M, m, \sigma, t}$. 
We put $BS^m_\sigma(\lambda):=\bigcup_{t\in[0, 1]}BS^m_\sigma(\lambda; t)$. 
\end{defi}

In arguments on $L^2$ boundedness of pseudodifferential operators, it is enough to assume the boundedness of the increasing or decreasing rate. 

The following proposition says that this concept of bisymbols is a generalization of the setting in the introduction. 
\begin{prop}\label{prop_s_sub_bs}
Let $\lambda$ be a positive function on $\mathbb{R}$ satisfying the condition (\ref{assump_lambda}). For any integers $M\geq 0$ and real parameters $t\in [0, 1]$, there exists a constant $C_{M t}>0$ such that  
\begin{equation}
|a_t|_{\lambda: M, m, \sigma, t}\leq C_{Mt}|a|_{\lambda: M, m, \sigma} \label{bs<s}
\end{equation}
for all $a\in S^m_\sigma(\lambda)$ if we put $a_t(q, p, q^\prime):=a(tq+(1-t)q^\prime, p)$. 
\end{prop}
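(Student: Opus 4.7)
The plan is to prove (\ref{bs<s}) by direct chain rule, bookkeeping the $\lambda$-weights produced by the normalized derivatives $\partial_{q,lt}^A$, $\partial_{p,lt}^B$, $\partial_{q',lt}^{A'}$ and using only the slow-variation hypothesis (\ref{assump_lambda}) on $\lambda$ to pass between the local model $\lambda^t_{jk}=\lambda(tj+(1-t)k)$ and the actual scale $\lambda(\tilde r)$ at the evaluation point $\tilde r := tr+(1-t)r'$. Since $a_t(q,p,q')=a(\tilde q, p)$ depends on $(q,q')$ only through $\tilde q := tq+(1-t)q'$, each $q$-derivative produces a factor $t$ and each $q'$-derivative a factor $1-t$, while $p$-derivatives pass through; writing out the normalized derivatives I get
\[
\partial_{q,lt}^A\partial_{p,lt}^B\partial_{q',lt}^{A'}a_t(q,p,q') = t^{|A|}(1-t)^{|A'|}\,(\lambda^t_{jk})^{-|\alpha|-|\alpha'|+|\beta|}\,\bigl(\partial_r^{\alpha_0+\alpha_0'}\partial_\theta^{\alpha+\alpha'}\partial_\rho^{\beta_0}\partial_\eta^\beta a\bigr)(\tilde q, p).
\]

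Next I would apply the $S^m_\sigma(\lambda)$ bound on $a$ at $(\tilde q,p)$ with the combined multiindex $(\alpha_0+\alpha_0',\alpha+\alpha',\beta_0,\beta)$, whose total length is $\leq M$. Undoing the $\lambda(\tilde r)^{\pm 1}$ scalings built into the seminorm $|\cdot|_{\lambda:M,m,\sigma}$ supplies the bound
\[
\bigl|\partial_r^{\alpha_0+\alpha_0'}\partial_\theta^{\alpha+\alpha'}\partial_\rho^{\beta_0}\partial_\eta^\beta a(\tilde q,p)\bigr|\leq |a|_{\lambda:M,m,\sigma}\,\lambda(\tilde r)^{|\alpha+\alpha'|-|\beta|}\jbracket{\rho\oplus\lambda(\tilde r)^{-1}\eta}^{m-\sigma|B|}.
\]
The convex-combination estimate $|\tilde r-(tj+(1-t)k)|\leq t|r-j|+(1-t)|r'-k|\leq 1$ now activates (\ref{assump_lambda}) and yields the uniform two-sided comparison $\lambda(\tilde r)/\lambda^t_{jk}\in [C^{-1},C]$. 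This lets me replace $\lambda(\tilde r)^{|\alpha+\alpha'|-|\beta|}$ by $(\lambda^t_{jk})^{|\alpha+\alpha'|-|\beta|}$ up to a factor $\leq C^M$, cancelling the $\lambda^t_{jk}$-scalings in the preceding display, and also gives $\jbracket{\rho\oplus\lambda(\tilde r)^{-1}\eta}\asymp\jbracket{\rho\oplus(\lambda^t_{jk})^{-1}\eta}$ uniformly in $(\rho,\eta)$; raising this to the bounded exponent $m-\sigma|B|$ costs only another constant depending on $M$, $m$, $\sigma$, $C$. Taking the supremum over $l=(j,k)\in\mathbb Z^2$ and over $(q,p,q')$ with $|r-j|\leq 1$, $|r'-k|\leq 1$, and using $t^{|A|}(1-t)^{|A'|}\leq 1$, yields (\ref{bs<s}) with a constant $C_{Mt}$ that, in fact, is $t$-independent.

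I do not anticipate any substantial obstacle: the proposition is essentially bookkeeping that balances the chain-rule factors $t,1-t$ and the symmetric $(\lambda^t_{jk})^{\pm 1}$-scalings against the $\lambda(\tilde r)^{\pm 1}$-scalings built into the $S^m_\sigma(\lambda)$ seminorm. The only mildly delicate step is verifying, once and for all, that the comparison of Japanese brackets is uniform in $(\rho,\eta)\in\mathbb R^n$ and respects a bounded exponent of either sign, which follows directly from $|\lambda(\tilde r)^{-1}\eta|\asymp|(\lambda^t_{jk})^{-1}\eta|$ and elementary monotonicity of $\jbracket{\cdot}^s$ under equivalent arguments.
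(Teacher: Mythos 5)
Your proof is correct and follows essentially the same route as the paper: compute the normalized derivatives of $a_t$ by the chain rule, undo the $\lambda(\tilde r)$-normalization built into the $S^m_\sigma(\lambda)$ seminorm, and use the slow-variation hypothesis (\ref{assump_lambda}) together with $|\tilde r-(tj+(1-t)k)|\le 1$ to compare $\lambda(\tilde r)$ with $\lambda^t_{jk}$ and the corresponding Japanese brackets uniformly. Incidentally, your leading factor $t^{|A|}(1-t)^{|A'|}$ is the precise one produced by the chain rule, whereas the paper's display writes only $t^{|\alpha|}(1-t)^{|\alpha'|}$ and omits the radial contributions $t^{\alpha_0}(1-t)^{\alpha_0'}$; this is immaterial for the estimate since all these factors are $\le 1$.
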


\begin{proof}
If $|A|+|B|+|A^\prime|\leq M$, $l=(j, k)\in\mathbb{Z}^2$, $|r-j|\leq 1$ and $|r^\prime-k|\leq 1$, then 
\begin{align}
&|\partial_{q, l t}^A\partial_{p, l t}^B\partial_{q^\prime, l t}^{A^\prime}a_t(q,p, q^\prime)| \nonumber \\ =
&t^{|\alpha|}(1-t)^{|\alpha^\prime|}\left(\frac{\lambda(tr+(1-t)r^\prime)}{\lambda^t_{jk}}\right)^{|\alpha|+|\alpha^\prime|-|\beta|} \\
&\times |\partial_r^{\alpha_0+\alpha_0^\prime}(\lambda^{-1}\partial_\theta)^{\alpha+\alpha^\prime}\partial_\rho^{\beta_0}(\lambda\partial_\eta)^\beta a(tq+(1-t)q^\prime, p)| \nonumber \\ \leq 
&t^{|\alpha|}(1-t)^{|\alpha^\prime|}\left(\frac{\lambda(tr+(1-t)r^\prime)}{\lambda^t_{jk}}\right)^{|\alpha|+|\alpha^\prime|-|\beta|}|a|_{\lambda: M, m, \sigma}
\jbracket{\rho\oplus \lambda^{-1}\eta}^{m-\sigma|B|} \nonumber \\ =
&t^{|\alpha|}(1-t)^{|\alpha^\prime|}\left(\frac{\lambda(tr+(1-t)r^\prime)}{\lambda^t_{jk}}\right)^{|\alpha|+|\alpha^\prime|-|\beta|} \label{bdd_bs} \\
&\times\left(\frac{\jbracket{\rho\oplus \lambda(tr+(1-t)r^\prime)^{-1}\eta}}{\jbracket{\rho\oplus (\lambda^t_{jk})^{-1}\eta}}\right)^{m-\sigma|B|} 
|a|_{\lambda: M, m, \sigma}
\jbracket{\rho\oplus (\lambda^t_{jk})^{-1}\eta}^{m-\sigma|B|}. \nonumber
\end{align}
The term (\ref{bdd_bs}) above has an upper bound independent of $l=(j, k)$ since the condition (\ref{assump_lambda}) for $\lambda$ and $|(tj+(1-t)k)-(tr+(1-t)r^\prime)|\leq 1$ if $|r-j|$ and $|r^\prime-k|\leq 1$. Hence 
\[
|a_t|_{\lambda: |A|+|B|+|A^\prime|, m, \sigma, t}\leq C_{ABA^\prime t}|a|_{\lambda: M, m, \sigma}. 
\]
This completes the proof. 
\end{proof}

We define pseudodifferential operators with bisymbols. 
\begin{defi}
We define $\Op^g(a)$ associated with a bisymbol $a(q, p, q^\prime)\in BS^m_\sigma(\lambda)$ by 
\[
\Op^g(a)u(q):=\frac{1}{(2\pi)^n}g(q)^{-\frac{1}{4}}\int_{\mathbb{R}^n}dp \int_{\mathbb{R}^n}dq^\prime \, a\left(q, p, q^\prime\right)e^{ip\cdot (q-q^\prime)}u(q^\prime)g(q^\prime)^\frac{1}{4}. 
\]
$\Op^1(a)$ is simply denoted as $\Op(a)$. Clearly we have $\Op^g(a)=g^{-1/4}\Op(a)g^{1/4}$. 
\end{defi}

We first prove the smoothness of $\mathrm{Op}(a)u$ for $u\in C_c^\infty(\mathbb{R}^n)$. 

\begin{prop}\label{prop_smoothness}
If $a\in BS^m_0(\lambda)$, then $\Op(a)$ defines a continuous linear operator from $C_c^\infty(\mathbb{R}^n)$ to $C^\infty(\mathbb{R}^n)$. Thus so does $\Op^g(a)$. 
\end{prop}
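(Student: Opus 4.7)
My plan is to render the oscillatory double integral defining $\mathrm{Op}(a)u(q)$ absolutely convergent via integration by parts in $q'$, and then obtain smoothness and continuity by differentiating under the integral sign. Fix $u\in C_c^\infty(\mathbb{R}^n)$ with $\mathrm{supp}\,u\subset L$ for some compact $L$, and an arbitrary compact $K\subset\mathbb{R}^n$ on which I want to control $\mathrm{Op}(a)u$ and its derivatives. Using the identity $e^{ip\cdot(q-q')}=\jbracket{p}^{-2N}(1-\Delta_{q'})^N e^{ip\cdot(q-q')}$ and integrating by parts in $q'$ (legitimate because $u$ is compactly supported, so boundary terms vanish), I rewrite
\[
\mathrm{Op}(a)u(q)=\frac{1}{(2\pi)^n}\int dp\,\jbracket{p}^{-2N}\int dq'\,(1-\Delta_{q'})^N\bigl[a(q,p,q')u(q')\bigr]e^{ip\cdot(q-q')}.
\]

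Next I estimate the integrand. For $q\in K$ and $q'\in L$, the first coordinates $r,r'$ lie in a bounded interval, so the integers $j,k$ with $|r-j|,|r'-k|\leq1$ are confined to a finite set; iterating the comparison property (\ref{assump_lambda}) yields positive constants $c,C$ depending only on $K,L$ with $c\leq\lambda^t_{jk}\leq C$. Consequently $\jbracket{\rho\oplus(\lambda^t_{jk})^{-1}\eta}$ and $\jbracket{p}$ are uniformly comparable on this region, and converting the scaled derivatives $\partial^{A'}_{q',lt}$ appearing in the bisymbol estimate back to Euclidean derivatives $\partial_{q'}^{A'}$ only loses further bounded powers of $\lambda^t_{jk}$. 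The $a\in BS^m_0(\lambda)$ assumption therefore gives $|\partial_{q'}^{A'}a(q,p,q')|\leq C'_{A',K,L}\,\jbracket{p}^{\max(m,0)}$ uniformly on $q\in K$, $q'\in L$, so for $|A'|\leq2N$ the full integrand is bounded by a constant times $\jbracket{p}^{\max(m,0)-2N}\|u\|_{C^{2N}(L)}$, which is integrable over $\mathbb{R}^n\times L$ as soon as $2N>\max(m,0)+n$.

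Smoothness then follows by differentiation under the integral. A derivative $\partial_q^C$ either lands on $a(q,p,q')$ (bounded again by a constant times $\jbracket{p}^{\max(m,0)}$ by the bisymbol estimate applied to $q$-derivatives) or on $e^{ip\cdot(q-q')}$, producing a polynomial factor $p^{C'}$ with $|C'|\leq|C|$. Increasing $N$ so that $2N>\max(m,0)+|C|+n$ keeps the differentiated integrand absolutely integrable and yields
\[
\sup_{q\in K}\bigl|\partial^C\mathrm{Op}(a)u(q)\bigr|\leq C_{K,L,C,a}\,\|u\|_{C^{|C|+2N}(L)},
\]
which is precisely the continuity of $\mathrm{Op}(a):C_c^\infty(\mathbb{R}^n)\to C^\infty(\mathbb{R}^n)$ with respect to the inductive-limit and Fréchet topologies. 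The corresponding claim for $\mathrm{Op}^g(a)=g^{-1/4}\mathrm{Op}(a)g^{1/4}$ is then immediate from the smoothness and positivity of $g$, since multiplication by $g^{\pm1/4}$ preserves both $C_c^\infty(\mathbb{R}^n)$ and $C^\infty(\mathbb{R}^n)$ continuously.

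The main technical point to be careful about is the mismatch between the bisymbol bound, phrased in terms of the twisted weight $\jbracket{\rho\oplus(\lambda^t_{jk})^{-1}\eta}$, and the plain weight $\jbracket{p}$ needed for the integration-by-parts identity. Because $\lambda$ is allowed to grow exponentially, the two weights are not globally comparable; the saving point is that only compactly supported $u$ and compact $q$-sets appear in this statement, on which $\lambda^t_{jk}$ is pinched between positive constants by iterating (\ref{assump_lambda}), so the two weights become uniformly comparable on the region that actually contributes and the straightforward Schwartz-kernel argument goes through.
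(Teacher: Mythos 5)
Your proposal is correct and takes essentially the same route as the paper: both integrate by parts in $q'$ to generate decay in $\jbracket{p}$ (you use $(1-\Delta_{q'})^N$, the paper uses $L=(1+ip\cdot\partial_{q'})/\jbracket{p}^2$, a cosmetic difference), both invoke the key observation that on a compact $q,q'$-region the relevant $\lambda^t_{jk}$ are pinched between positive constants so the twisted weight $\jbracket{\rho\oplus(\lambda^t_{jk})^{-1}\eta}$ becomes comparable to $\jbracket{p}$ (this is exactly the paper's Lemma \ref{lemm_bs_comp}), and both conclude by differentiating under the integral with $N$ taken large enough to dominate any polynomial growth in $p$ coming from the $q$-derivatives.
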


This is proved by the same method as that for usual pseudodifferential operators, but we give the proof, since we need some care for the compact support condition of the test functions. 
We prepare a lemma for the proof. 
\begin{lemm}\label{lemm_bs_comp}
Let $K$ and $K^\prime$ be compact subsets of $\mathbb{R}^n$ and $t\in[0, 1]$. Then for all $M\geq 0$, there exists a constant $C_{MKK^\prime t}$ such that 
\[
|\partial_q^A\partial_p^B\partial_{q^\prime}^{A^\prime} a(q, p, q^\prime)|\leq C_{MKK^\prime t} |a|_{\lambda: M, m, 0, t}\jbracket{p}^m
\]
for all $a\in BS^m_0(\lambda; t)$, $|A|+|B|+|A^\prime|\leq M$ and $(q, q^\prime)\in K\times K^\prime$. 
\end{lemm}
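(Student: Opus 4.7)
The plan is to rewrite ordinary partial derivatives in terms of the scaled derivatives $\partial_{q, lt}^A$, $\partial_{p, lt}^B$, $\partial_{q^\prime, lt}^{A^\prime}$ that appear in the seminorm $|\cdot|_{\lambda: M, m, 0, t}$, and then use compactness of $K$ and $K^\prime$ to bound the resulting powers of $\lambda_{jk}^t$ by a finite constant. Given $(q, q^\prime) = (r, \theta, r^\prime, \theta^\prime) \in K \times K^\prime$, I would first fix integers $j, k \in \mathbb{Z}$ with $|r - j| \leq 1$ and $|r^\prime - k| \leq 1$ (for instance the nearest integers); since $K, K^\prime$ are compact, only finitely many such pairs $l = (j, k)$ occur, so $\lambda_{jk}^t = \lambda(tj + (1-t)k)$ ranges over a finite set and is bounded above and below by constants depending on $K$, $K^\prime$, $t$.

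Next, from the identities
\[
\partial_\theta^\alpha = (\lambda_{jk}^t)^{|\alpha|} \bigl((\lambda_{jk}^t)^{-1}\partial_\theta\bigr)^\alpha, \quad
\partial_\eta^\beta = (\lambda_{jk}^t)^{-|\beta|} (\lambda_{jk}^t \partial_\eta)^\beta, \quad
\partial_{\theta^\prime}^{\alpha^\prime} = (\lambda_{jk}^t)^{|\alpha^\prime|} \bigl((\lambda_{jk}^t)^{-1}\partial_{\theta^\prime}\bigr)^{\alpha^\prime},
\]
I get
\[
\partial_q^A \partial_p^B \partial_{q^\prime}^{A^\prime} a = (\lambda_{jk}^t)^{|\alpha| + |\alpha^\prime| - |\beta|} \, \partial_{q, lt}^A \partial_{p, lt}^B \partial_{q^\prime, lt}^{A^\prime} a.
\]
By the definition of $BS^m_0(\lambda; t)$, the last factor is bounded by $|a|_{\lambda: M, m, 0, t} \jbracket{\rho \oplus (\lambda_{jk}^t)^{-1} \eta}^m$, and the prefactor $(\lambda_{jk}^t)^{|\alpha| + |\alpha^\prime| - |\beta|}$ is controlled uniformly in $(q, q^\prime) \in K \times K^\prime$ by a constant depending only on $M$, $K$, $K^\prime$, $t$.

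Finally I would compare $\jbracket{\rho \oplus (\lambda_{jk}^t)^{-1}\eta}$ with $\jbracket{p}$. Writing $s = (\lambda_{jk}^t)^{-1}$, a direct computation gives $\min(1, s) \jbracket{p} \leq \jbracket{\rho \oplus s\eta} \leq \max(1, s)\jbracket{p}$, hence
\[
\jbracket{\rho \oplus (\lambda_{jk}^t)^{-1} \eta}^m \leq C \jbracket{p}^m
\]
with a constant $C$ depending only on $m$ and on the range of $\lambda_{jk}^t$, i.e.\ on $K$, $K^\prime$, $t$. Combining the three estimates yields the claim.

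There is no substantial obstacle; the entire argument is bookkeeping. The only point requiring a bit of care is the two-sided comparison between $\jbracket{\rho \oplus s\eta}$ and $\jbracket{p}$, which must be valid for \emph{all} real $m$ (positive or negative), and the observation that compactness of $K$ and $K^\prime$ is what prevents the constants from blowing up even when $\lambda$ itself is unbounded or tends to zero at infinity.
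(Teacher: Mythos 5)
Your proof is correct and follows essentially the same argument as the paper: choose $l=(j,k)$ near $(r,r')$, use compactness to make $\lambda^t_{jk}$ range over a finite set, convert ordinary derivatives to scaled ones picking up the factor $(\lambda^t_{jk})^{|\alpha|+|\alpha'|-|\beta|}$, and compare $\jbracket{\rho\oplus(\lambda^t_{jk})^{-1}\eta}$ with $\jbracket{p}$. Your version is in fact slightly more careful than the paper's in making the two-sided comparison $\min(1,s)\jbracket{p}\leq\jbracket{\rho\oplus s\eta}\leq\max(1,s)\jbracket{p}$ explicit, which is what is needed when $m<0$; the paper records only the one-sided ratio bound, though the other direction holds by the same compactness.
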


\begin{proof}
Take $(q, q^\prime)\in K\times K^\prime$ and choose $l=(j, k)\in \mathbb{Z}^2$ such that $|r-j|, |r^\prime-k|\leq 1$. Then for all $A=(\alpha_0, \alpha), B=(\beta_0, \beta), A^\prime=(\alpha_0^\prime, \alpha^\prime)\in \mathbb{Z}_{\geq 0}\times\mathbb{Z}_{\geq 0}^{n-1}$ such that $|A|+|B|+|A^\prime|\leq M$, 
\[
|\partial_q^A\partial_p^B\partial_{q^\prime}^{A^\prime} a(q, p, q^\prime)|\leq (\lambda^t_{jk})^{|\alpha|-|\beta|+|\alpha^\prime|}\jbracket{\rho\oplus (\lambda^t_{jk})^{-1}\eta}^m|a|_{\lambda: M, m, 0, t}. 
\]
Since $l$ is in a finite set 
\[
(\mathbb{Z}\times\mathbb{Z})\cap (\{ j\mid \mathrm{dist}(j, \pi_\mathrm{rad}(K))\leq 1\}\times\{ k\mid \mathrm{dist}(k, \pi_\mathrm{rad}(K^\prime))\leq 1\})
\]
where $\pi_\mathrm{rad}(r, \theta):=r$ is a projection to a radial component, we have $C_{KK^\prime t}^{-1}\leq \lambda^t_{jk} \leq C_{KK^\prime t}$ for all such $l$. Thus 
\[
(\lambda^t_{jk})^{|\alpha|-|\beta|+|\alpha^\prime|}\leq C_{MKK^\prime t}
, \quad
\sup_{\rho, \eta} \jbracket{\rho\oplus (\lambda^t_{jk})^{-1}\eta}/\jbracket{\rho\oplus\eta}\leq C_{KK^\prime t}
\]
and hence obtain the result. 
\end{proof}

\begin{proof}[Proof of Proposition \ref{prop_smoothness}]
The integrand of $\Op(a)u(q)=(2\pi)^{-n}\int F(q, p)\, dp $ is 
\[
F(q, p)=e^{iq\cdot p}\mathscr{F}_{q^\prime\to p}[a(q, p, q^\prime)u(q^\prime)](p). 
\]
if we denote the Fourier transform of $u\in C_c^\infty (\mathcal{V})$ by $\mathscr{F}u(p)=\int u(q)e^{-ip\cdot q}\, dq$. 
Note that the function $q^\prime\mapsto a(q, p, q^\prime)u(q^\prime)$ is smooth and compactly supported in $\mathbb{R}^n$ by $u\in C_c^\infty(\mathbb{R}^n)$. 

The derivative of $F$ is
\begin{align*}
&D_q^A (e^{ip\cdot q}\mathscr{F}_{q^\prime\to p}[a(q, p, q^\prime)u(q^\prime)](p)) \\
=&\sum_{B\leq A}
\begin{pmatrix}
A \\
B
\end{pmatrix}
e^{ip\cdot q}p^{A-B} 
\mathscr{F}_{q^\prime\to p}[t^{|B|}(D_q^B a)(q, p, q^\prime)u(q^\prime)](p),
\end{align*}
and, for an arbitrary integer $N$, this is equal to
\begin{align*}
&\sum_{B\leq A}
\begin{pmatrix}
A \\
B
\end{pmatrix}
t^{|B|}e^{ip\cdot q}p^{A-B} 
\mathscr{F}_{q^\prime\to p}[\transp{L}^N((D_q^B a)(q, p, q^\prime)u(q^\prime))](p)
\end{align*}
by the integration by parts by the differential operator $L=(1+ip\cdot \partial_{q^\prime})/\jbracket{p}^2$. 
Hence
\begin{align}
&|\partial_q^A F(q, p)| \nonumber \\
&\leq C_A\sum_{B \leq A} |(ip)^{A-B} \mathscr{F}_{q^\prime\to p}[\transp{L}^N((D_q^Ba)(q, p, q^\prime)u(q^\prime))](\rho, \eta)| \nonumber\\
&\leq C_A\jbracket{p}^{|A|}\sum_{B\leq A} \|\transp{L}^N((D_q^B a)(q, p, q^\prime)u(q^\prime))\|_{L^1(dq^\prime)} \nonumber \\
&\leq C_A \jbracket{p}^{|A|-N} 
\sum_{\substack{B\leq A \\ |B_1|+|B_2|\leq N}} 
\| \partial_q^B \partial_{q^\prime}^{B_1} a(q, p, q^\prime)\partial_{q^\prime}^{B_2}u(q^\prime)\|_{L^1(dq^\prime)}. \label{est2.1.}
\end{align}

Now we assume that $q=(r, \theta)\in K$ and $\mathrm{supp}(u)\subset K^\prime$ for compact sets $K, K^\prime$. By the above lemma \ref{lemm_bs_comp}, 
\begin{align*}
&\| \partial_q^B \partial_{q^\prime}^{B_1}a(q, p, q^\prime)\partial_{q^\prime}^{B_2}u(q^\prime)\|_{L^1(dq^\prime)} \\ 
&\leq C_{K^\prime}\|\partial_q^B \partial_{q^\prime}^{B_1} a(q, p, q^\prime)\|_{L^\infty(K\times\mathbb{R}^n\times K^\prime)}\|\partial_q^{B_2}u\|_{L^\infty} \\
&\leq C_{A NKK^\prime t}|a|_{\lambda: |A|+N, m, 0, t}\jbracket{p}^m\|\partial_q^{B_2}u\|_{L^\infty}. 
\end{align*}
Hence 
\[
(\ref{est2.1.})\leq C_{ANKK^\prime t} \jbracket{p}^{|A|+m-N} |a|_{\lambda: |A|+N, m, 0, t}
\sum_{|B_2|\leq N} \| \partial_q^{B_2}u\|_{L^\infty}. 
\]
If we take $N$ so large that $|A|+m-N<-n$ (we take the smallest one), then $\jbracket{p}^{|A|+m-N}$ is integrable in $d\rho d\eta$. Hence so is $|F(q, p)|$, and we obtain the differentiability of $\int F(q, p)\, d\rho d\eta$. 

The continuity of $\Op(a)$ follows from 
\begin{align*}
&\sup_{q\in K}\left|\partial_q^A \left(\int F(q, p)\, dp\right)\right| 
\leq \sup_{q\in K} \int |\partial_q^AF(q, p)|\, dp  \\
&\leq C_{ANKK^\prime t} \int \jbracket{p}^{|A|+m-N}\,dp \,|a|_{\lambda: |A|+N, m, 0, t}
\sum_{|B_2|\leq N} \| \partial_q^{B_2}u\|_{L^\infty} \\
&\leq C_{ANKK^\prime t} |a|_{\lambda: |A|+N, m, 0, t}\sum_{|B_2|\leq N} \| \partial_q^{B_2}u\|_{L^\infty}. 
\qedhere 
\end{align*} 
\end{proof}

Now we state the $L^2$ boundedness of pseudodifferential operators with bounded bisymbols.

\begin{theo}\label{thm_L2_bdd_bi}
Let $\lambda: \mathbb{R}\to (0, \infty)$ be a function which satisfies the condition  (\ref{assump_lambda}). Then there exists a positive constant $C>0$ and an integer $N\geq 0$ such that 
\[
\|\Op^g(a)u\|_{L^2(g^{1/2}dq)\to L^2(g^{1/2}dq)}\leq C|a|_{\lambda: M, 0, 0, t}\|u\|_{L^2(g^{1/2}dq)}
\]
for all $t\in [0, 1]$, $a\in BS^0_0(\lambda; t)$ and $u\in C_c^\infty(\mathbb{R}^n)$. 
\end{theo}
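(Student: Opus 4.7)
The plan is to reduce to the classical Calder\'on--Vaillancourt theorem for bisymbols on $\mathbb{R}^n$ by localizing $r$ and $r'$ to unit intervals and performing an anisotropic rescaling of the angular variables by the local scale $\lambda^t_{jk}$, then to patch the pieces together with the Cotlar--Stein lemma. First I would eliminate $g$: since $\Op^g(a)=g^{-1/4}\Op(a)g^{1/4}$ and the map $u\mapsto g^{1/4}u$ is a unitary isomorphism from $L^2(g^{1/2}dq)$ onto $L^2(dq)$, it suffices to establish $\|\Op(a)\|_{L^2(dq)\to L^2(dq)}\leq C|a|_{\lambda:M,0,0,t}$.

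I would then fix $\chi\in C_c^\infty(\mathbb{R})$ with $\mathrm{supp}\,\chi\subset(-1,1)$ and $\sum_{j\in\mathbb{Z}}\chi(r-j)=1$, set $\chi_j(q):=\chi(r-j)$, and decompose
\[
\Op(a)=\sum_{j,k\in\mathbb{Z}}T_{jk},\qquad T_{jk}:=M_{\chi_j}\Op(a)M_{\chi_k},
\]
where $M_f$ denotes multiplication by $f$. Each $T_{jk}$ has Schwartz kernel supported in $\{|r-j|\leq 1\}\times\{|r'-k|\leq 1\}$. For each $(j,k)$ I would introduce an anisotropic unitary rescaling $U_{jk}$ on $L^2(\mathbb{R}^n)$ of the form
\[
U_{jk}u(r,\theta):=(\lambda^t_{jk})^{(n-1)/2}u(r,\lambda^t_{jk}\theta),
\]
under which, after a direct change of variables in $\theta,\eta,\theta'$, the conjugate $U_{jk}^{-1}T_{jk}U_{jk}$ is a pseudodifferential operator whose bisymbol lies in a bounded subset of the standard H\"ormander amplitude class $S^0_{0,0}(\mathbb{R}^n\times\mathbb{R}^n\times\mathbb{R}^n)$, with bounds depending only on $|a|_{\lambda:M,0,0,t}$ and uniformly in $(j,k,t)$. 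This is precisely the scale invariance built into the seminorms. The classical Calder\'on--Vaillancourt theorem for amplitudes then yields $\|T_{jk}\|_{L^2\to L^2}\leq C|a|_{\lambda:M,0,0,t}$ with constants independent of $(j,k,t)$.

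The family $\{T_{jk}\}$ is summed via the Cotlar--Stein lemma. Support disjointness of the cutoffs gives $T_{jk}^*T_{j'k'}=0$ for $|j-j'|>2$ and $T_{jk}T_{j'k'}^*=0$ for $|k-k'|>2$, so almost-orthogonality in one of the two indices is automatic. For the remaining index I would extract decay by integrating by parts in $p$ against the oscillatory phase $e^{ip\cdot(q-q')}$ in the composed kernel, performed on the rescaled bisymbol where the derivative bounds are scale free. The unit-cell separation then produces factors $\jbracket{j-j'}^{-N}$ or $\jbracket{k-k'}^{-N}$ summable for large $N$, verifying the Cotlar--Stein hypotheses and giving $\|\Op(a)\|_{L^2\to L^2}\leq C|a|_{\lambda:M,0,0,t}$ with $M$ depending only on $n$.

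The main obstacle is the rescaling step: one must carefully track the Jacobians from the change of variables and verify that the bisymbol seminorms, with their supremum over the lattice $(j,k)\in\mathbb{Z}^2$, are exactly what is needed to make the rescaled symbols a bounded family in $S^0_{0,0}$. Once this is in hand, the remaining integration-by-parts argument for Cotlar--Stein is entirely classical; this bypasses any direct analysis of the off-diagonal kernel decay of $\Op(a)$ itself, which would be intractable when $\lambda$ grows exponentially, as the remark in the introduction explicitly points out.
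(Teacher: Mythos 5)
Your proposal follows essentially the same route as the paper: eliminating $g$ by unitary conjugation, localizing in $r$ on unit scales, rescaling anisotropically by $\lambda^t_{jk}$ to reduce each block to the classical Calder\'on--Vaillancourt theorem for bounded bisymbols, and summing via Cotlar--Stein with off-diagonal decay obtained by integration by parts against the oscillatory phase. One small bookkeeping slip: the decay the integration-by-parts step produces is $\jbracket{j-k}^{-N}$ for each localized block $T_{jk}$ (it measures the separation of the $r$-cutoffs $\chi_j$ and $\chi_k$ inside a single block), not $\jbracket{j-j'}^{-N}$ or $\jbracket{k-k'}^{-N}$; submultiplicativity of operator norms together with the automatic support constraint then yields the Cotlar--Stein summability, exactly as in the paper's proof.
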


\section{Proof of $L^2$ boundedness}\label{sec_L2bdd}

\subsection{Main argument}

Let $\{\psi_j=\psi(\cdot-j)\}_{j\in \mathbb{Z}}$ be a partition of unity of $\mathbb{R}$ where $\psi\in C_c^\infty((-1, 1))$,  $\psi\geq 0$. The multiplication operator by the function $\psi_j(r)$ is denoted as $\psi_j$ too: 
\[
(\psi_ju)(r, \theta):=\psi_j(r)u(r, \theta). 
\]

The important step for proving $L^2$ boundedness of $\Op(a)$ is an estimate of the $L^2$ operator norm of $\psi_j\Op(a)\psi_k$. 

\begin{prop}\label{prop_disjoint_union}
For any $N\geq 0$, there exists a constant $C=C_N>0$ and an integer $M=M_N>0$ such that 
\[
\|\psi_j\Op(a)\psi_ku\|_{L^2(dq)}\leq C|a|_{\lambda: M, 0, 0, t}\jbracket{j-k}^{-N}\|u\|_{L^2(dq)}
\]
holds for all $t\in [0, 1]$, $a\in BS(\lambda; t)$, $j$, $k\in\mathbb{Z}$ and $u\in C_c^\infty(\mathbb{R}^n)$. 
\end{prop}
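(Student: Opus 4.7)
The plan is to reduce to the standard ($\lambda\equiv 1$) Euclidean setting by an anisotropic rescaling in the angular variables, and then use integration by parts in $\rho$ to exploit the fact that the cut-offs $\psi_j(r), \psi_k(r')$ force $|r-r'|\gtrsim |j-k|$ on the support.

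Fix $j, k \in \mathbb{Z}$ and $t \in [0,1]$, and set $\Lambda := \lambda^t_{jk}$. I would introduce the unitary $U$ on $L^2(dq)$ defined by $(Uu)(r, \tilde\theta) := \Lambda^{-(n-1)/2}u(r, \Lambda^{-1}\tilde\theta)$; since this commutes with multiplication by $\psi_j(r)$ and $\psi_k(r)$, a direct change of variables gives $U\Op(a)U^{-1} = \Op(\tilde a)$ with
\[
\tilde a(r,\tilde\theta,\rho,\tilde\eta,r',\tilde\theta') := a(r,\Lambda^{-1}\tilde\theta,\rho,\Lambda\tilde\eta,r',\Lambda^{-1}\tilde\theta').
\]
The $\lambda$-weighted derivatives appearing in the definition of $BS^0_0(\lambda;t)$ are precisely designed so that on the slab $\{|r-j|\leq 1,\; |r'-k|\leq 1\}$ the rescaled bisymbol $\tilde a$ satisfies the uniform Euclidean bounds $|\partial_r^{\alpha_0}\partial_{\tilde\theta}^\alpha\partial_\rho^{\beta_0}\partial_{\tilde\eta}^\beta\partial_{r'}^{\alpha'_0}\partial_{\tilde\theta'}^{\alpha'}\tilde a| \leq |a|_{\lambda:M,0,0,t}$ for $|A|+|B|+|A'|\leq M$, with constants independent of $(j,k,t)$. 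This reduces the estimate to one for $\psi_j\Op(\tilde a)\psi_k$ in the standard $\lambda\equiv 1$ setting.

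For $|j-k|\geq 4$, on the joint support of $\psi_j(r)\psi_k(r')$ we have $|r-r'|\geq|j-k|-2\geq 2$; in particular $r\neq r'$. Using $e^{i\rho(r-r')} = (i(r-r'))^{-1}\partial_\rho e^{i\rho(r-r')}$ and integrating by parts in $\rho$ a total of $N$ times, I would rewrite $\psi_j\Op(\tilde a)\psi_k = \Op(\tilde b)$ with
\[
\tilde b(\tilde q,\tilde p,\tilde q') = \frac{(-1)^N\psi_j(r)\psi_k(r')}{(i(r-r'))^N}\,\partial_\rho^N\tilde a(\tilde q,\tilde p,\tilde q').
\]
On the joint support of the cut-offs, $|r-r'|^{-N}\leq C\jbracket{j-k}^{-N}$ and every $r$- or $r'$-derivative of $(r-r')^{-N}$ contributes only a bounded additional factor, so $\tilde b$ has uniform standard bisymbol seminorms bounded by $C_N|a|_{\lambda:M+N,0,0,t}\jbracket{j-k}^{-N}$. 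Applying the classical Calder\'on-Vaillancourt $L^2$-boundedness theorem in the Euclidean setting to $\Op(\tilde b)$ and using that $U$ is an isometry yields
\[
\|\psi_j\Op(a)\psi_k\|_{L^2\to L^2} \leq C_N\jbracket{j-k}^{-N}\,|a|_{\lambda:M,0,0,t}.
\]
The remaining cases $|j-k|\leq 3$ are handled directly: $\jbracket{j-k}^{-N}$ is then a bounded constant, and classical Calder\'on-Vaillancourt applies to $\psi_j(r)\tilde a\psi_k(r')$ itself.

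The main obstacle I expect is the bookkeeping of uniformity in $(j,k,t)$: one must verify that the scaling step produces $\tilde a$ with $(j,k,t)$-uniform standard seminorm bounds (this is exactly why the $BS^0_0(\lambda;t)$-seminorms weight $\partial_\theta$ and $\partial_\eta$ by $\lambda^t_{jk}$), and that the integration by parts in $\rho$ can be rigorously justified despite the absence of $\tilde p$-decay of $\tilde a$, which is handled by performing the computation inside the oscillatory integral paired against $u\in C_c^\infty$, invoking Proposition \ref{prop_smoothness} for smoothness of the image, together with a standard cut-off regularization in $\rho$.
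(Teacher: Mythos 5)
Your proof is correct and follows essentially the same route as the paper: rescale by the unitary $U^t_{jk}$ to pass to a $\lambda\equiv 1$ bisymbol with uniform seminorms (the paper's Lemma \ref{lemm_scaling_bisymbols}), then exploit the radial separation enforced by $\psi_j,\psi_k$ to integrate by parts and gain $\jbracket{j-k}^{-N}$, and finally invoke the Euclidean Calder\'on--Vaillancourt theorem. The only notable deviation is that you integrate by parts in $\rho$ alone using $(i(r-r'))^{-1}\partial_\rho$, whereas the paper's auxiliary Proposition \ref{prop_usual_bisymbols} (part 2) integrates by parts in the full momentum variable with $-i(q-q')\cdot\partial_p/|q-q'|^2$ and is stated for cut-offs supported apart in all of $q$; your variant is a perfectly valid (in fact slightly cleaner) specialization, since $\psi_j$ and $\psi_k$ depend only on $r$ and hence the separation is purely radial, and the required bound $|r-r'|\geq|j-k|-2$ on the joint support is exactly what your $\rho$-IBP exploits. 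You are also right that the delicate point is justifying the integration by parts inside the (non-absolutely-convergent) $p$-integral; the standard regularization $\phi(\varepsilon p)\to 1$ you invoke, or equivalently the argument underlying Proposition \ref{prop_smoothness}, takes care of it.
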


In order to prove this proposition, we employ a kind of scaling arguments. The scaling operator $U^t_{jk}$ dependent on $j$, $k\in \mathbb{Z}$ and $t\in [0, 1]$ is defined as
\[
U^t_{jk}f(r, \theta):=(\lambda^t_{jk})^{-\frac{n-1}{2}}f(r, (\lambda^t_{jk})^{-1}\theta), 
\]
where
\[
\lambda^t_{jk}:=\lambda(tj+(1-t)k). 
\]
This is a unitary operator on $L^2(dq)$. Then the conjugation of $\psi_j\Op(a)\psi_k: C_c^\infty(\mathbb{R}^n)\to C^\infty(\mathbb{R}^n)$ by $U^t_{jk}$ is 
\begin{align*}
&U^t_{jk}\psi_j\Op(a)\psi_k(U^t_{jk})^{-1}u(q) \\
&=\frac{1}{(2\pi)^n}\int_{\mathbb{R}^n}dp \int_{\mathbb{R}^n}dq^\prime \, a\left(r, (\lambda^t_{jk})^{-1}\theta, p, q^\prime\right)\psi_j(r)\psi_k(r^\prime)e^{i\rho(r-r^\prime)+i\eta\cdot ((\lambda^t_{jk})^{-1}\theta-\theta^\prime)} \\
&\quad\times u(r^\prime, \lambda^t_{jk}\theta^\prime) \\
&=\frac{1}{(2\pi)^n}\int_{\mathbb{R}^n}d\tilde p \int_{\mathbb{R}^n}d\tilde{q}^\prime \, a\left(r, (\lambda^t_{jk})^{-1}\theta, \rho, \lambda^t_{jk}\tilde\eta, r^\prime, (\lambda^t_{jk})^{-1}\tilde{\theta}^\prime\right) \\
&\quad \times \psi_j(r)\psi_k(r^\prime)e^{i\tilde p\cdot (q-\tilde{q}^\prime)}u(\tilde{q}^\prime) \\
&=\Op(a^t_{jk})u(q). 
\end{align*}
Here we put 
\[
a^t_{jk}(q, p, q^\prime):=a\left(r, (\lambda^t_{jk})^{-1}\theta, \rho, \lambda^t_{jk}\eta, r^\prime, (\lambda^t_{jk})^{-1}\theta^\prime\right)\psi_j(r)\psi_k(r^\prime)
\]
and changed the variables 
\[
\tilde{q}^\prime=(r, \tilde\theta)=(r, \lambda^t_{jk}\theta), \quad \tilde p=(\rho, \tilde\eta)=(\rho, (\lambda^t_{jk})^{-1}\eta). 
\]
Thus if $\Op(a^t_{jk})$ is bounded on $L^2(dq)$, then $\psi_j\Op(a)\psi_k$ is also bounded on $L^2(dq)$ and they have the same operator norm on $L^2(dq)$. 

\begin{lemm}\label{lemm_scaling_bisymbols}
If $a\in BS^m_\sigma(\lambda; t)$, then $a^t_{jk}\in BS^m_\sigma(1; t)$ for all $j$, $k\in\mathbb{Z}$. 

Moreover, for each integer $M\geq 0$, there exists a constant $C_M>0$ such that 
\[
|a^t_{jk}|_{1: M, m, \sigma, t}\leq C_M |a|_{\lambda: M, m, \sigma, t}
\]
holds for all $t\in [0, 1]$, $a\in BS^m_\sigma(\lambda; t)$ and $j$, $k\in \mathbb{Z}$. 
\end{lemm}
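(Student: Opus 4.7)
The plan is to verify the bound by direct chain-rule computation, tracking how the scaling factor $\mu := \lambda^t_{jk}$ enters and cancels. Write $\tilde q := (r, \mu^{-1}\theta)$, $\tilde p := (\rho, \mu\eta)$, $\tilde q' := (r', \mu^{-1}\theta')$ for the scaled arguments, so that $a^t_{jk}(q,p,q') = a(\tilde q, \tilde p, \tilde q')\psi_j(r)\psi_k(r')$. The chain rule yields $\partial_\theta^\alpha = \mu^{-|\alpha|}\partial_{\tilde\theta}^\alpha$, $\partial_\eta^\beta = \mu^{|\beta|}\partial_{\tilde\eta}^\beta$, $\partial_{\theta'}^{\alpha'} = \mu^{-|\alpha'|}\partial_{\tilde\theta'}^{\alpha'}$, while $\partial_r$, $\partial_\rho$, $\partial_{r'}$ pass through unchanged. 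The key observation is that these prefactors $\mu^{\mp|\cdot|}$ are exactly the ones built into the bisymbol-type derivatives $\partial_{\tilde q, lt}^A$, $\partial_{\tilde p, lt}^B$, $\partial_{\tilde q', lt}^{A'}$ from the definition of $BS^m_\sigma(\lambda;t)$ at the index $l = (j,k)$.

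Next I apply $\partial_q^A\partial_p^B\partial_{q'}^{A'}$ to $a^t_{jk}$ and distribute $\partial_r^{\alpha_0}$ and $\partial_{r'}^{\alpha_0'}$ across $a(\tilde q,\tilde p,\tilde q')$, $\psi_j(r)$ and $\psi_k(r')$ by Leibniz. After absorbing the $\mu^{\pm|\cdot|}$-factors produced by the chain rule into the bisymbol-derivative notation on $a$, each resulting term has the form
\[
c_{A,A'}\,(\partial_r^{\gamma_0}\psi_j)(r)(\partial_{r'}^{\gamma_0'}\psi_k)(r')\,(\partial_{\tilde q, lt}^{\tilde A}\partial_{\tilde p, lt}^B\partial_{\tilde q', lt}^{\tilde A'} a)(\tilde q,\tilde p,\tilde q')
\]
with $l = (j,k)$ and $|\tilde A|+|\tilde A'|\leq |A|+|A'|$. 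On the support of $\psi_j(r)\psi_k(r')$ one has $|r-j|<1$ and $|r'-k|<1$, so the bisymbol estimate for $a$ at this very $l$ applies and gives
\[
|(\partial_{\tilde q, lt}^{\tilde A}\partial_{\tilde p, lt}^B\partial_{\tilde q', lt}^{\tilde A'} a)(\tilde q,\tilde p,\tilde q')|\leq |a|_{\lambda:M,m,\sigma,t}\jbracket{\rho\oplus(\lambda^t_{jk})^{-1}\mu\eta}^{m-\sigma|B|} = |a|_{\lambda:M,m,\sigma,t}\jbracket{p}^{m-\sigma|B|},
\]
since $(\lambda^t_{jk})^{-1}\mu=1$. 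The cutoff derivatives $\|\partial^{\gamma_0}\psi\|_{L^\infty}$ are absorbed into a constant $C_M$ independent of $j$, $k$, $t$.

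To conclude, note that the seminorm $|a^t_{jk}|_{1:M,m,\sigma,t}$ for $BS^m_\sigma(1;t)$ involves ordinary derivatives with weight $\jbracket{p}^{m-\sigma|B|}$ and a supremum over all $l'\in\mathbb{Z}^2$; but off the support of $\psi_j\psi_k$ the symbol $a^t_{jk}$ vanishes identically, and on the support the pointwise bound above is already uniform in $l'$. The proof is therefore essentially bookkeeping. The only real subtlety is lining up the chain-rule factors with the bisymbol-derivative prefactors so that the weight $\jbracket{\rho\oplus(\lambda^t_{jk})^{-1}\mu\eta}$ collapses to $\jbracket{p}$ exactly, which is of course what the scaled arguments in the definition of $a^t_{jk}$ were designed to achieve.
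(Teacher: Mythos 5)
Your proof is correct and takes essentially the same route as the paper: Leibniz on the $r$-variables against the cutoffs, chain rule to convert $\partial_\theta,\partial_\eta,\partial_{\theta'}$ into the $\lambda^t_{jk}$-scaled derivatives, then apply the $BS^m_\sigma(\lambda;t)$ seminorm of $a$ at the index $l=(j,k)$, using that the scaled $\eta$-argument makes the weight collapse to $\jbracket{p}^{m-\sigma|B|}$. The only cosmetic difference is bookkeeping: the paper first factors out the power $(\lambda^t_{jk})^{-|\alpha|+|\beta|-|\alpha'|}$ and cancels it later against the seminorm, while you absorb the chain-rule powers directly into the bisymbol-derivative notation so no explicit cancellation is needed.
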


\begin{proof}
If $|A|+|B|+|A^\prime|\leq M$, then, on the support of $a^t_{jk}$, 
\begin{align*}
&|\partial_q^A\partial_p^B\partial_{q^\prime}^{A^\prime}a^t_{jk}(q,p, q^\prime)| \\ 
&\leq C_M(\lambda^t_{jk})^{-|\alpha|+|\beta|-|\alpha^\prime|} \\
&\quad\times
\left|
\sum_{\substack{0\leq\tilde \alpha_0\leq \alpha_0 \\ 0\leq \tilde \alpha_0^\prime\leq \alpha_0^\prime}}
(\partial_r^{\tilde \alpha_0}\partial_\theta^\alpha\partial_p^B\partial_{r^\prime}^{\tilde \alpha_0^\prime}\partial_{\theta^\prime}^{\alpha^\prime} a)(r, (\lambda^t_{jk})^{-1}\theta, \rho, \lambda^t_{jk}\eta, r^\prime, (\lambda^t_{jk})^{-1}\theta^\prime)\right. \\
&\left.\quad\times\partial_r^{\alpha_0-\tilde \alpha_0}\psi_j(r)\partial_{r^\prime}^{\alpha_0^\prime-\tilde \alpha_0^\prime}\psi_k(r^\prime)\right| \\
&\leq C_M|a|_{\lambda: M, m, \sigma, t}
\jbracket{\rho\oplus (\lambda^t_{jk})^{-1}\lambda^t_{jk}\eta}^{m-\sigma|B|}
=C_M|a|_{\lambda: M, m, \sigma, t}
\jbracket{p}^{m-\sigma|B|}. \qedhere
\end{align*}
\end{proof}

Note that the class of bisymbols $BS^m_\sigma(1; t)$ is independent of $t\in [0, 1]$ and is just the usual bisymbol classes
\begin{align*}
&BS^m_\sigma(1) \\
&:=\{\,a\in C^\infty(\mathbb{R}^{3n})\mid |\partial_q^A \partial_p^B \partial_{q^\prime}^{A^\prime}a(q, p, q^\prime)|\leq C_{ABA^\prime}\jbracket{p}^{m-\sigma|B|}, \, \exists C_{ABA^\prime}>0\,\}. 
\end{align*}
The seminorms $|\cdot|_{1; M, m, \sigma, t}$ in $BS^m_\sigma(1)=BS^m_\sigma(1; t)$ is simply denoted as $|\cdot|_{M, m, \sigma}$. 
We employ some facts for the pseudodifferential operators with bisymbols which is stated below. 

\begin{prop}\label{prop_usual_bisymbols}
\begin{enumerate}
\item There exists a constant $C>0$ and $M\geq 0$ such that 
\[
\|\Op(a)u\|_{L^2(dq)}\leq C|a|_{M, 0, 0}\|u\|_{L^2(dq)}
\]
holds for all $a\in BS^0_0(1)$ and $u\in C_c^\infty(\mathbb{R}^n)$. 
\item For all $N\geq 0$, there exists a constant $C>0$ and an integer $M\geq 0$ such that 
\begin{align*}
&\|\chi_1\Op(a)\chi_2u\|_{L^2(dq)} \\
&\leq C\mathrm{dist}(\mathrm{supp}(\chi_1), \mathrm{supp}(\chi_2))^{-N}|\chi_1|_M |\chi_2|_M\sum_{|B|=N}|\partial_p^B a|_{M, 0, 0} \|u\|_{L^2(dq)}
\end{align*}
holds for all $a\in BS^0_0$, $u\in C_c^\infty(\mathbb{R}^n)$ and $\chi_1$, $\chi_2\in \mathcal{B}$ with 
\[
\mathrm{dist}(\mathrm{supp}(\chi_1), \mathrm{supp}(\chi_2))>0. 
\]
Here 
\[
\mathcal{B}:=\{\, \chi\in C^\infty(\mathbb{R}^n)\mid \partial_q^A\chi\in L^\infty(\mathbb{R}^n)\text{ for all } A\in \mathbb{Z}_{\geq 0}^n\,\}
\]
and 
\[
|\chi|_M:=\sum_{|A|\leq M} \|\partial_q^A\chi\|_{L^\infty(\mathbb{R}^n)}. 
\]
\end{enumerate}
\end{prop}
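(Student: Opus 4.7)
My plan is to prove the two statements in turn. For Part 1, the $L^2$ boundedness of $\Op(a)$ for $a\in BS^0_0(1)$, I would follow a Calder\'on--Vaillancourt strategy adapted to bisymbols. Decompose $a=\sum_\nu \phi_\nu a$ using a unit-scale partition of unity $\{\phi_\nu\}_{\nu\in\mathbb{Z}^{3n}}$ on $\mathbb{R}^n_q\times\mathbb{R}^n_p\times\mathbb{R}^n_{q'}$ and set $T_\nu=\Op(\phi_\nu a)$. Each piece has compactly supported symbol, so its integral kernel can be estimated by integration by parts in $p$ and decays rapidly off the diagonal. The cross-term estimates $\|T_\nu^* T_{\nu'}\|+\|T_\nu T_{\nu'}^*\|\leq C_N\jbracket{\nu-\nu'}^{-N}$, obtained via repeated integration by parts in $q$, $q'$, and $p$ together with the support properties of the $\phi_\nu$, invoke only finitely many seminorms of $a$. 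The Cotlar--Stein almost-orthogonality lemma then produces the uniform $L^2$ bound $\|\Op(a)\|_{L^2\to L^2}\leq C|a|_{M,0,0}$ for some $M$ independent of $a$. An alternative is to reduce a bisymbol to a left-quantized symbol in $S^0_{0,0}$ via a standard oscillatory-integral formula and invoke the classical Calder\'on--Vaillancourt theorem.

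For Part 2, the key tool is integration by parts in the $p$ variable. On the region $|q-q'|\geq d:=\mathrm{dist}(\mathrm{supp}(\chi_1),\mathrm{supp}(\chi_2))>0$, relevant to the cutoff $\chi_1(q)\chi_2(q')$, the operator
\[
L=-i|q-q'|^{-2}\sum_{j=1}^n(q_j-q_j')\partial_{p_j}
\]
is smooth and satisfies $Le^{ip\cdot(q-q')}=e^{ip\cdot(q-q')}$. Iterating integration by parts $N$ times converts $\chi_1\Op(a)\chi_2$ into $\sum_{|B|=N}c_B\Op(b_B)$ with new bisymbols
\[
b_B(q,p,q')=\chi_1(q)\chi_2(q')\frac{(q-q')^B}{|q-q'|^{2N}}\partial_p^B a(q,p,q').
\]
On the support of $\chi_1\chi_2$ the factor $(q-q')^B/|q-q'|^{2N}$ has $L^\infty$ norm at most $d^{-N}$, and its $(q,q')$-derivatives produce extra negative powers of $d$ at higher order. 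After careful bookkeeping---absorbing these extra factors into the choice of $M$, or iterating the integration by parts a few more times---one controls $|b_B|_{M,0,0}$ by $d^{-N}|\chi_1|_M|\chi_2|_M|\partial_p^B a|_{M,0,0}$. Applying Part 1 to each $\Op(b_B)$ and summing over $|B|=N$ yields the stated estimate.

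The main obstacle is Part 1, since the symbol class $BS^0_0(1)$ admits no gain from differentiation in any variable; in particular the Schwartz kernel of $\Op(a)$ does not decay absolutely away from the diagonal, so Schur's test is unavailable. The Cotlar--Stein decomposition, while standard in spirit, requires careful seminorm bookkeeping in the bisymbol setting because the off-diagonal decay must be extracted by integration by parts in all three of $q$, $q'$, $p$ simultaneously. Once Part 1 is established, Part 2 is essentially mechanical modulo the tracking of $d$-factors coming from derivatives of $(q-q')^B/|q-q'|^{2N}$.
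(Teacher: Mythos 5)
Your proposal is essentially correct and, for Part 2, takes exactly the paper's route: integration by parts via $L=-i|q-q'|^{-2}(q-q')\cdot\partial_p$, the pointwise seminorm estimate on the new bisymbol, and reduction to Part 1 (the paper, like you, glosses over the fact that $q,q'$-derivatives hitting $(q-q')^B/|q-q'|^{2N}$ produce extra negative powers of $|q-q'|$, which is harmless when $d\geq 1$ and in the paper's application, but does affect the power of $d$ when $d<1$). For Part 1 the paper does not give a proof at all, but simply cites Kumano-go's textbook --- combining the reduction of a double-symbol (bisymbol) operator to a left quantization with the classical $S^0_{0,0}$ Calder\'on--Vaillancourt theorem --- which is precisely the ``alternative'' you mention at the end of your Part 1 discussion; your primary sketch via a unit-scale $\mathbb{Z}^{3n}$ decomposition and Cotlar--Stein is the standard direct proof of that cited theorem, so you are not taking a genuinely different route, merely supplying the details the paper outsources.
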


We prove Proposition \ref{prop_usual_bisymbols} in Appendix \ref{appendix_usual_psiDO}. What we do here is the proof of Proposition \ref{prop_disjoint_union} from Proposition \ref{prop_usual_bisymbols}. 

\begin{proof}[Proof of Proposition \ref{prop_disjoint_union}]
By the statement 1 of Proposition \ref{prop_usual_bisymbols} and Lemma \ref{lemm_scaling_bisymbols}, we obtain
\begin{align*}
&\|\psi_j\Op(a)\psi_k\|_{L^2(dq)\to L^2(dq)} \\
&=\|\Op(a^t_{jk})\|_{L^2(dq)\to L^2(dq)}\leq C|a^t_{jk}|_{M, 0, 0}\leq C|a|_{\lambda: M, m, \sigma, t}
\end{align*}
for some $C>0$ and $M\geq 0$ independent of $t\in [0,1]$ and $j$, $k\in \mathbb{Z}$. 

Assume that $|j-k|\geq 2$. Since $\mathrm{supp}(\psi)\subset (-1, 1)$, there exists a small $\delta>0$ such that $\mathrm{supp}(\psi)\subset (-1+\delta, 1-\delta)$. Take a smooth function $\tilde\psi: \mathbb{R}\to [0, \infty)$ such that
\[
\mathrm{supp}(\tilde\psi)\subset \left(-1+\frac{\delta}{2}, 1-\frac{\delta}{2}\right), \quad \tilde\psi=1 \text{ on } \mathrm{supp}(\psi) 
\]
and put $\tilde\psi_j:=\tilde\psi(\cdot-j)$. 
Then 
\[
\mathrm{dist}(\mathrm{supp}(\tilde\psi_j), \mathrm{supp}(\tilde\psi_k))\geq |j-k|-2+\delta>0
\]
and $\Op(a^t_{jk})=\tilde\psi_j\Op(a^t_{jk})\tilde\psi_k$. Hence we can apply the statement 2 of Proposition \ref{prop_usual_bisymbols} and obtain
\begin{align*}
&\|\Op(a^t_{jk})u\|_{L^2(dq)}=\|\tilde\psi_j\Op(a^t_{jk})\tilde\psi_ku\|_{L^2(dq)} \\
&\leq 
C_N(|j-k|-2+\delta)^{-N}|\tilde\psi|_{M_N}^2\sum_{|B|=N}|\partial_p^Ba^t_{jk}|_{M_N, 0, 0}\|u\|_{L^2(dq)} \\
&\leq C_N\jbracket{j-k}^{-N}\sum_{|B|=N}|\partial_p^Ba^t_{jk}|_{M_N, 0, 0}\|u\|_{L^2(dq)} \\
&\leq C_N\jbracket{j-k}^{-N}|a^t_{jk}|_{M_N+N, 0, 0}\|u\|_{L^2(dq)}
\leq C_N\jbracket{j-k}^{-N}|a|_{\lambda: M_N, 0, 0}\|u\|_{L^2(dq)}
\end{align*}
for some $C_N>0$ and $M_N\geq 0$, and for all $u\in C_c^\infty(\mathbb{R}^n)$. 
\end{proof}

We return to the proof of Theorem \ref{thm_L2_bdd_bi}. We need two more lemmas. 
\begin{lemm}\label{lem_formal_adjoint}
If $a\in BS^m_\sigma(\lambda; t)$, then $a^\dagger(q, p, q^\prime):=\overline{a(q^\prime, p, q)}\in BS^m_\sigma(\lambda; 1-t)$ and 
\[
(\psi_j\Op(a)\psi_k)^*|_{C_c^\infty(\mathbb{R}^n)}=\psi_k\Op(a^\dagger)\psi_j|_{C_c^\infty(\mathbb{R}^n)} 
\]
for $a\in BS^0_0(\lambda)$. 
\end{lemm}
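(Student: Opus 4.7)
The proof splits into the symbol-theoretic claim and the operator identity, and I would treat them separately.

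For the claim that $a^\dagger \in BS^m_\sigma(\lambda; 1-t)$, I would verify the seminorm estimates directly. Writing $a = a(\zeta, p, \zeta')$ for its three arguments, the chain rule gives
\[
\partial_q^A \partial_p^B \partial_{q^\prime}^{A^\prime} a^\dagger(q, p, q^\prime) = \overline{(\partial_\zeta^{A^\prime} \partial_p^B \partial_{\zeta^\prime}^A a)(q^\prime, p, q)}.
\]
To check the definition of $BS^m_\sigma(\lambda; 1-t)$ at a pair $l = (j, k)$ with $|r-j|\leq 1$, $|r^\prime - k|\leq 1$, I would apply the hypothesis that $a\in BS^m_\sigma(\lambda; t)$ at the swapped pair $l^\prime := (k, j)$. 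The point $(q^\prime, p, q)$ then satisfies $|r^\prime - k|\leq 1$ in the first slot and $|r - j|\leq 1$ in the third, and the scaling factor becomes
\[
\lambda^t_{l^\prime} = \lambda(tk + (1-t)j) = \lambda^{1-t}_{jk},
\]
which is precisely the factor demanded by the target class. The derivatives $\partial_{q,\, l(1-t)}^A$ in the $q$-variable match the $\partial_{q^\prime,\, l^\prime t}^A$ derivatives in the $\zeta^\prime$-slot of $a$, and analogously for the other two factors. Thus the estimate carries over, and one immediately reads off $|a^\dagger|_{\lambda: M, m, \sigma, 1-t} = |a|_{\lambda: M, m, \sigma, t}$.

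For the operator identity, I would pair both sides against test functions $u, v \in C_c^\infty(\mathbb{R}^n)$ and show that $\langle \psi_j \Op(a)\psi_k u, v\rangle_{L^2(dq)} = \langle u, \psi_k \Op(a^\dagger)\psi_j v\rangle_{L^2(dq)}$. Formally this is a matter of writing
\[
\langle \psi_j \Op(a)\psi_k u, v\rangle = \frac{1}{(2\pi)^n}\int\!\int\!\int a(q,p,q^\prime)\,e^{ip\cdot(q-q^\prime)}\,\psi_j(r)\psi_k(r^\prime)\,u(q^\prime)\,\overline{v(q)}\,dq^\prime\,dp\,dq,
\]
using $e^{ip\cdot(q-q^\prime)} = \overline{e^{ip\cdot(q^\prime-q)}}$, relabeling $q\leftrightarrow q^\prime$, and recognizing the result as the integral kernel of $\psi_k\Op(a^\dagger)\psi_j$ paired with $v$ against $u$.

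The only genuine difficulty is that $a$ is merely bounded in $p$, so the triple integral is not absolutely convergent and Fubini does not apply directly. This I would handle exactly as in the proof of Proposition \ref{prop_smoothness}: inserting $\transp{L}^N$ with $L = (1 + ip\cdot\partial_{q^\prime})/\jbracket{p}^2$ sufficiently many times (using that $u$ has compact support and $\psi_j, \psi_k$ localize $r, r^\prime$ to compact intervals) renders the integrand absolutely integrable in $(q, p, q^\prime)$. Fubini then applies legitimately, the relabeling and conjugation can be performed, and integrating by parts back recovers the analogous oscillatory-integral representation of $\psi_k\Op(a^\dagger)\psi_j v$. The main obstacle is this convergence justification, but it is routine given the machinery already deployed in Proposition \ref{prop_smoothness}; no new estimates are required.
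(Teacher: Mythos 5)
Your proof is correct and follows the same route as the paper's (which is quite terse: it asserts the symbol-class membership is clear from the definition and invokes Proposition \ref{prop_smoothness} to justify the formal-adjoint computation). Your explicit relabeling $l\mapsto l'=(k,j)$, together with the observation $\lambda^t_{kj}=\lambda^{1-t}_{jk}$ and the swap $A\leftrightarrow A'$ in the seminorm sum, is exactly what ``equivalent by definition'' unpacks to, and your use of the $\transp{L}^N$ integration by parts to legitimize Fubini is the content of the paper's appeal to Proposition \ref{prop_smoothness}.
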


\begin{proof}
The condition for $a^\dagger\in BS^m_\sigma(\lambda; 1-t)$ is equivalent to that for $a\in BS^m_\sigma(\lambda; t)$ by definition. 
Proposition \ref{prop_smoothness} justifies the argument in proving that $\Op(a^\dagger)$ is a formal adjoint of $\Op(a)$. Hence 
\[
\jbracket{\psi_j\Op(a)\psi_ku, v}=\jbracket{u, \psi_k\Op(a^\dagger)\psi_jv}
\]
for all $u$, $v\in C_c^\infty(\mathbb{R}^n)$. Hence
\[
\psi_k\Op(a^\dagger)\psi_jv=(\psi_j\Op(a)\psi_k)^*v
\]
for all $v\in C_c^\infty(\mathbb{R}^n)$. 
\end{proof}

\begin{lemm}[Cotlar-Stein lemma]
Let $\{A_\alpha: \mathcal{H}_1\to \mathcal{H}_2\}_{\alpha\in \Lambda}$ be a countable family of bounded operators between two Hilbert spaces $\mathcal{H}_1$ and $\mathcal{H}_2$. If 
\[
\sup_{\alpha\in \Lambda}\sum_{\beta\in \Lambda}\|A_\alpha^*A_\beta\|^\frac{1}{2}\leq M \text{ and } 
\sup_{\alpha\in \Lambda}\sum_{\beta\in \Lambda}\|A_\alpha A_\beta^*\|^\frac{1}{2}\leq M, 
\]
then 
\[
A:=\sum_{\alpha\in \Lambda}A_\alpha
\]
converges in a strong operator topology and $\|A\|\leq M$. 
\end{lemm}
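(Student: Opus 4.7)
My plan is to prove the Cotlar--Stein lemma in two stages: first a uniform norm bound for finite partial sums via the classical $TT^*$ iteration, then strong convergence by a density argument.

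\emph{Stage 1 (uniform bound for finite partial sums).} For any finite subset $F\subset\Lambda$, set $S_F:=\sum_{\alpha\in F}A_\alpha$ and write $a_{\alpha\beta}:=\|A_\alpha^*A_\beta\|^{1/2}$, $b_{\alpha\beta}:=\|A_\alpha A_\beta^*\|^{1/2}$. By hypothesis, $\sup_\alpha \sum_\beta a_{\alpha\beta}\leq M$ and $\sup_\alpha\sum_\beta b_{\alpha\beta}\leq M$; in particular $\|A_\alpha\|=a_{\alpha\alpha}\leq M$ for every $\alpha$. Since $S_F^*S_F$ is self-adjoint, $\|S_F\|^{2N}=\|(S_F^*S_F)^N\|$ for every $N\geq 1$, and expanding gives
\[
(S_F^*S_F)^N=\sum_{\alpha_1,\beta_1,\ldots,\alpha_N,\beta_N\in F}A_{\alpha_1}^*A_{\beta_1}A_{\alpha_2}^*A_{\beta_2}\cdots A_{\alpha_N}^*A_{\beta_N}.
\]
The key trick is to bound each product in two ways: grouping as $(A_{\alpha_1}^*A_{\beta_1})(A_{\alpha_2}^*A_{\beta_2})\cdots(A_{\alpha_N}^*A_{\beta_N})$ gives the bound $\prod_i a_{\alpha_i\beta_i}^2$, while grouping as $A_{\alpha_1}^*(A_{\beta_1}A_{\alpha_2}^*)\cdots(A_{\beta_{N-1}}A_{\alpha_N}^*)A_{\beta_N}$ gives $M^2\prod_i b_{\beta_i\alpha_{i+1}}^2$. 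Taking the geometric mean yields
\[
\|A_{\alpha_1}^*A_{\beta_1}\cdots A_{\alpha_N}^*A_{\beta_N}\|\leq M\,a_{\alpha_1\beta_1}b_{\beta_1\alpha_2}a_{\alpha_2\beta_2}b_{\beta_2\alpha_3}\cdots b_{\beta_{N-1}\alpha_N}a_{\alpha_N\beta_N}.
\]
Now sum over the chain of indices from right to left: each successive sum (alternately over a $\beta_i$ or an $\alpha_i$) contributes at most a factor $M$ by the respective hypothesis, giving $M^{2N-1}$ in all. The leftmost sum over $\alpha_1\in F$ contributes a factor $|F|$. Thus $\|(S_F^*S_F)^N\|\leq |F|M^{2N}$, so $\|S_F\|\leq |F|^{1/(2N)}M$; letting $N\to\infty$ removes the $|F|$ factor and yields $\|S_F\|\leq M$.

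\emph{Stage 2 (strong convergence of the full sum).} The net of partial sums $\{S_F\}_F$ indexed by finite subsets of $\Lambda$ is uniformly bounded by $M$, so the set $D:=\{u\in\mathcal{H}_1\mid (S_Fu)_F\text{ is Cauchy}\}$ is a closed subspace of $\mathcal{H}_1$. I will show $D=\mathcal{H}_1$ by orthogonal decomposition. For any $\gamma\in\Lambda$ and $w\in\mathcal{H}_2$, consider $u=A_\gamma^*w$. Using the symmetry $b_{\alpha\gamma}=b_{\gamma\alpha}$ and the elementary inequality $b_{\alpha\gamma}^2\leq M\cdot b_{\alpha\gamma}$, we get $\sum_\alpha \|A_\alpha A_\gamma^*\|=\sum_\alpha b_{\alpha\gamma}^2\leq M\sum_\alpha b_{\gamma\alpha}\leq M^2$, so $\sum_\alpha A_\alpha A_\gamma^*w$ converges absolutely in $\mathcal{H}_2$. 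Hence $A_\gamma^*w\in D$ for every $\gamma,w$, so $D$ contains the closed linear span of $\bigcup_\gamma \mathrm{ran}(A_\gamma^*)$. Its orthogonal complement is $\bigcap_\gamma \ker A_\gamma$, on which every $A_\alpha$ vanishes and $S_Fu\equiv 0$ trivially converges. Therefore $D=\mathcal{H}_1$, the limit $A$ exists in the strong operator topology, and $\|A\|\leq \liminf_F\|S_F\|\leq M$.

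The main obstacle is the combinatorial step in Stage 1: choosing the right pair of factorizations of the long product, taking their geometric mean, and recognizing that the resulting chain $a\,b\,a\,b\cdots a$ telescopes neatly under the two summability hypotheses, with the only ``loss'' being the harmless factor $|F|$ that disappears in the $N\to\infty$ limit. Everything else is bookkeeping: the upgrade from the finite bound to strong convergence follows from a standard dense-range argument once the uniform bound is in hand.
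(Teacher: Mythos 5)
The paper does not prove the Cotlar--Stein lemma; it simply cites the textbooks of Martinez and Zworski for the proof. Your argument is a correct and complete version of the standard $TT^*$ iteration proof found in those references: the uniform bound $\|S_F\|\leq M$ via the $\|(S_F^*S_F)^N\|$ expansion, the geometric-mean trick on the two factorizations of the long product, the telescoping of the $a\,b\,a\,b\cdots a$ chain to absorb all but the $|F|$ factor, the $N\to\infty$ limit, and the density/orthogonal-complement upgrade to strong convergence. All the steps check out, including the slightly delicate point that $b_{\alpha\gamma}=b_{\gamma\alpha}$ and $b_{\alpha\gamma}\leq M$ so that $\sum_\alpha \|A_\alpha A_\gamma^*\|\leq M^2$, and the observation that $D$ is closed because the partial-sum operators are uniformly bounded. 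Nothing to add.
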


We can find the proof of the Cotlar-Stein lemma in \cite{Martinez02}, \cite{Zworski12} for instance. 

\begin{proof}[Proof of Theorem \ref{thm_L2_bdd_bi}]
It is enough to show that
\[
\|\Op(a)u\|_{L^2(dq)\to L^2(dq)}\leq C|a|_{\lambda: M, 0, 0, t}\|u\|_{L^2(dq)}. 
\]
Put $A_{jk}:=\psi_j\Op(a)\psi_k$. We first want to prove
\[
A_{jk}A_{lm}^*=1_{\{|k-m|\leq 1\}}A_{jk}A_{lm}^*, \, A_{jk}^*A_{lm}=1_{\{|j-l|\leq 1\}}A_{jk}^*A_{lm},  
\]
which are formally obvious. 
For a technical reason, we take a smooth function $\chi\in C_c^\infty(\mathbb{R}^n)$ such that $\chi(q)=1$ near $q=0$ and define $\chi_\varepsilon:=\chi(\varepsilon\cdot)$. 
Let $u\in C_c^\infty(\mathbb{R}^n)$. Then 
\[
\chi_\varepsilon A_{lm}^*u=\chi_\varepsilon \psi_m\Op(a^\dagger)\psi_lu\in C_c^\infty(\mathbb{R}^n)
\]
by Lemma \ref{lem_formal_adjoint} and Proposition \ref{prop_smoothness}. Thus
\[
A_{jk}\chi_\varepsilon A_{lm}^*u=\psi_j\Op(a)\psi_k\chi_\varepsilon\psi_m\Op(a^\dagger)\psi_lu
=1_{\{|k-m|\leq 1\}}A_{jk}\chi_\varepsilon A_{lm}^*u. 
\]
We take a limit $\varepsilon \to 0$ in a $L^2(dq)$-norm topology and obtain
\[
A_{jk}A_{lm}^*u=1_{\{|k-m|\leq 1\}}A_{jk}A_{lm}^*u 
\]
for all $u\in C_c^\infty(\mathbb{R}^n)$. Here we used the $L^2(dq)$ boundedness of $A_{jk}$. 

The other equality $A_{jk}^*A_{lm}=1_{\{|j-l|\leq 1\}}A_{jk}^*A_{lm}$ is proved similarly. Since $\chi_\varepsilon A_{lm}u\in C_c^\infty(\mathbb{R}^n)$ for $u\in C_c^\infty(\mathbb{R}^n)$ by Proposition \ref{prop_smoothness}, we obtain 
\[
A_{jk}^*\chi_\varepsilon A_{lm}u=\psi_k\Op(a^\dagger)\psi_j\chi_\varepsilon \psi_m \Op(a)\psi_lu
=1_{\{|j-m|\leq 1\}}A_{jk}^*\chi_\varepsilon A_{lm}u
\]
by applying Lemma \ref{lem_formal_adjoint}. We only have to take a limit $\varepsilon\to 0$ in an $L^2(dq)$-norm topology. 

Now we estimate
\[
\sum_{l, m\in \mathbb{Z}}\|A_{jk}A_{lm}^*\|^\frac{1}{2}\leq C|a|_{\lambda: M, 0, 0, t}\sum_{\substack{l, m\in \mathbb{Z} \\ |k-m|\leq 1}}\jbracket{j-k}^{-\frac{3}{2}}\jbracket{l-m}^{-\frac{3}{2}}\leq C|a|_{\lambda: M, 0, 0, t}
\]
and 
\[
\sum_{l, m\in \mathbb{Z}}\|A_{jk}^*A_{lm}\|^\frac{1}{2}\leq C|a|_{\lambda: M, 0, 0, t}\sum_{\substack{l, m\in \mathbb{Z} \\ |j-l|\leq 1}}\jbracket{j-k}^{-\frac{3}{2}}\jbracket{l-m}^{-\frac{3}{2}}\leq C|a|_{\lambda: M, 0, 0, t}
\]
for some $C>0$ and $M\geq 0$ independent of $t\in [0, 1]$, $a\in BS^0_0(\lambda; t)$ and $j$, $k\in\mathbb{Z}$ by Proposition \ref{prop_disjoint_union} setting $N=3$. Hence, by the Cotlar-Stein lemma, we obtain
\[
\left\|\sum_{j, k\in \mathbb{Z}}A_{jk}\right\|_{L^2(dq)\to L^2(dq)}\leq C|a|_{\lambda: M, 0, 0, t}. \qedhere
\]
\end{proof}

\subsection{The semiclassical case}

We record the semiclassical estimate of semiclassical pseudodifferential operators here, although we do not use it in the argument of the parametrices of resolvents. 

\begin{defi}
Let $\hbar>0$ be a small parameter. For $a\in S^m_\sigma(\lambda)$, we define
\[
\Op^{g, t}_\hbar(a):=\Op^{g, t}(a(q, \hbar p)), \, \Op^t_\hbar(a):=\Op^t(a(q, \hbar p)). 
\]
For $a\in BS^m_\sigma(\lambda)$, we define 
\[
\Op^g_\hbar (a):=\Op^g(a(q, \hbar p, q^\prime)), \, \Op_\hbar (a):=\Op(a(q, \hbar p, q^\prime)). 
\]
\end{defi}

Noting that 
\[
\sum_{|B|=N}|\partial_p^B(a(q, \hbar p, q^\prime))|_{M, 0, 0}=\hbar^N\sum_{|B|=N}|\partial_p^Ba|_{M, 0, 0}, 
\]
we have the semiclassical estimate of $\|\psi_j\Op_\hbar(a)\psi_k\|_{L^2(dq)\to L^2(dq)}$ for $|j-k|\geq 2$. 

\begin{prop}
For all $N\geq 0$, there exist constants $C>0$ and $M\geq 0$ such that 
\[
\|\psi_j\Op_\hbar(a)\psi_k\|_{L^2(dq)\to L^2(dq)}\leq C\hbar^N\jbracket{j-k}^{-N}|a|_{\lambda: M, 0, 0, t}
\]
for all $t\in [0, 1]$, $a\in BS(\lambda; t)$, $|j-k|\geq 2$ and $\hbar\in (0, 1]$. 
\end{prop}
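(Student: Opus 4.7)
The plan is to transport the scaling argument from the proof of Proposition \ref{prop_disjoint_union} directly to the semiclassical setting, using the fact that the dilation $p\mapsto \hbar p$ in the momentum variable commutes with the $\lambda^t_{jk}$-scaling (the former acts only on $(\rho,\eta)$, the latter only on $(\theta,\eta)$, up to the harmless rescaling $\eta\mapsto \lambda^t_{jk}\eta$ which is absorbed back in). Concretely, if we set $\tilde a(q,p,q'):=a(q,\hbar p,q')$, then a direct computation from the definition gives $(\tilde a)^t_{jk}(q,p,q')=a^t_{jk}(q,\hbar p,q')$, where $a^t_{jk}$ is the scaled bisymbol introduced between Lemma \ref{lemm_scaling_bisymbols} and Proposition \ref{prop_disjoint_union}. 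Conjugating by the unitary $U^t_{jk}$ therefore yields
\[
U^t_{jk}\psi_j\Op_\hbar(a)\psi_k(U^t_{jk})^{-1}=\Op\bigl(a^t_{jk}(q,\hbar p,q')\bigr),
\]
and it suffices to estimate the $L^2(dq)$-norm of the right-hand side.

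Next, I would reuse the thicker cutoffs $\tilde\psi_j,\tilde\psi_k$ of Proposition \ref{prop_disjoint_union}, which satisfy $\tilde\psi_j=1$ on $\mathrm{supp}(\psi_j)$, $\tilde\psi_k=1$ on $\mathrm{supp}(\psi_k)$, and
\[
\mathrm{dist}(\mathrm{supp}(\tilde\psi_j),\mathrm{supp}(\tilde\psi_k))\geq |j-k|-2+\delta>0 \quad\text{for }|j-k|\geq 2.
\]
Because the cutoffs $\psi_j(r)\psi_k(r')$ are built into $a^t_{jk}$ and are unaffected by the substitution $p\mapsto\hbar p$, we may insert them for free:
\[
\Op\bigl(a^t_{jk}(q,\hbar p,q')\bigr)=\tilde\psi_j\,\Op\bigl(a^t_{jk}(q,\hbar p,q')\bigr)\,\tilde\psi_k.
\]
Now apply part 2 of Proposition \ref{prop_usual_bisymbols} to the bisymbol $a^t_{jk}(q,\hbar p,q')\in BS^0_0(1)$, which is the step where the $\hbar$-factor is generated: the bound involves $\sum_{|B|=N}|\partial_p^B(a^t_{jk}(q,\hbar p,q'))|_{M,0,0}$, and each differentiation in $p$ produces one power of $\hbar$, so this quantity equals $\hbar^N\sum_{|B|=N}|\partial_p^Ba^t_{jk}|_{M,0,0}\leq C\hbar^N|a^t_{jk}|_{M+N,0,0}$.

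Finally, Lemma \ref{lemm_scaling_bisymbols} gives $|a^t_{jk}|_{M+N,0,0}\leq C|a|_{\lambda:M+N,0,0,t}$ uniformly in $j,k,t$, and for $|j-k|\geq 2$ one has $(|j-k|-2+\delta)^{-N}\leq C_N\jbracket{j-k}^{-N}$. Combining these estimates,
\[
\|\psi_j\Op_\hbar(a)\psi_k\|_{L^2(dq)\to L^2(dq)}\leq C_N\hbar^N\jbracket{j-k}^{-N}|a|_{\lambda:M,0,0,t},
\]
with $M=M_N$ and $C=C_N$ independent of $\hbar\in(0,1]$, $t\in[0,1]$, and $j,k\in\mathbb{Z}$ with $|j-k|\geq 2$. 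There is no real obstacle here: every ingredient has been set up in the earlier pages, and the only point that needs verification is the bookkeeping observation that the $\hbar$-scaling and the $\lambda^t_{jk}$-scaling commute, which is immediate from the definitions.
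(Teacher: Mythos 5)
Your argument is correct and follows exactly the route the paper sketches in the one-line observation preceding the proposition: conjugate by the unitary $U^t_{jk}$ to reduce to $\Op\bigl(a^t_{jk}(q,\hbar p,q')\bigr)$, insert the thicker cutoffs $\tilde\psi_j,\tilde\psi_k$ (valid because $\psi_j(r)\psi_k(r')$ is built into $a^t_{jk}$ and untouched by $p\mapsto\hbar p$), apply part 2 of Proposition \ref{prop_usual_bisymbols}, and harvest $\hbar^N$ from the $N$ momentum derivatives before finishing with Lemma \ref{lemm_scaling_bisymbols}. The only nitpick is that $\sum_{|B|=N}|\partial_p^B(a^t_{jk}(q,\hbar p,q'))|_{M,0,0}$ is bounded above by $\hbar^N\sum_{|B|=N}|\partial_p^B a^t_{jk}|_{M,0,0}$ rather than literally equal to it (the seminorm $|\cdot|_{M,0,0}$ itself involves further $p$-derivatives, each contributing an extra $\hbar\leq 1$), but the paper makes the same informal claim and the inequality is all that is needed.
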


\begin{prop}\label{prop_scaling_curved}
There exist constants $C>0$ and $M\geq 0$ such that 
\[
\|\psi_j\Op_\hbar(a)\psi_ku\|_{L^2(dq)}\leq C(\|a\|_{L^\infty(\mathbb{R}^{3n})}+\hbar^\frac{1}{2}|a|_{\lambda: M, 0, 0, t})\|u\|_{L^2(dq)}
\]
holds for all $t\in [0, 1]$, $a\in BS(\lambda; t)$, $j$, $k\in\mathbb{Z}$, $\hbar\in (0, 1]$ and $u\in C_c^\infty(\mathbb{R}^n)$. 
\end{prop}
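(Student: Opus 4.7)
The plan is to combine the scaling conjugation $U^t_{jk}$ used earlier in this section with a second, semiclassical, rescaling by $\sqrt{\hbar}$, and then invoke the $L^2$ boundedness of ordinary (non-semiclassical) bisymbols from Proposition \ref{prop_usual_bisymbols}(1).

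First, exactly as in the derivation preceding Lemma \ref{lemm_scaling_bisymbols}, the unitary conjugation by $U^t_{jk}$ on $L^2(dq)$ gives
\[
U^t_{jk}\,\psi_j\Op_\hbar(a)\psi_k\,(U^t_{jk})^{-1}=\Op_\hbar(a^t_{jk}),
\]
because the conjugation commutes with replacing $p$ by $\hbar p$ inside the symbol. In particular, $\|\psi_j\Op_\hbar(a)\psi_k\|_{L^2(dq)\to L^2(dq)}=\|\Op_\hbar(a^t_{jk})\|_{L^2(dq)\to L^2(dq)}$, and Lemma \ref{lemm_scaling_bisymbols} together with $\|a^t_{jk}\|_{L^\infty}\leq \|\psi\|_{L^\infty}^2\|a\|_{L^\infty}$ reduce everything to proving the flat estimate
\[
\|\Op_\hbar(b)\|_{L^2(dq)\to L^2(dq)}\leq C\bigl(\|b\|_{L^\infty}+\hbar^{1/2}|b|_{M,0,0}\bigr)\qquad (b\in BS^0_0(1)).
\]

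For this flat estimate I would use a semiclassical dilation. Define the unitary $Vu(x):=\hbar^{n/4}u(\sqrt\hbar x)$ on $L^2(dq)$. A direct change of variables, first $q=\sqrt\hbar x$, $q^\prime=\sqrt\hbar x^\prime$ and then $\xi=\tilde p/\sqrt\hbar$ in the oscillatory integral
\[
\Op_\hbar(b)u(q)=\frac{1}{(2\pi\hbar)^n}\int\!\!\int b(q,\tilde p,q^\prime)e^{i\tilde p\cdot(q-q^\prime)/\hbar}u(q^\prime)\,d\tilde p\,dq^\prime,
\]
yields
\[
V\Op_\hbar(b)V^{-1}=\Op(\tilde b),\qquad \tilde b(x,\xi,x^\prime):=b(\sqrt\hbar x,\sqrt\hbar\xi,\sqrt\hbar x^\prime),
\]
so the two operators have equal operator norms. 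Every $(q,p,q^\prime)$-derivative falling on $\tilde b$ brings down a factor $\sqrt\hbar$, and since $\hbar\in(0,1]$ only the zeroth-order term need not carry a $\sqrt\hbar$; hence
\[
|\tilde b|_{M,0,0}\leq \|b\|_{L^\infty}+\sqrt\hbar\,|b|_{M,0,0}.
\]
Applying Proposition \ref{prop_usual_bisymbols}(1) to $\tilde b\in BS^0_0(1)$ gives the desired bound $\|\Op_\hbar(b)\|\leq C(\|b\|_{L^\infty}+\sqrt\hbar|b|_{M,0,0})$.

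Combining the two steps with Lemma \ref{lemm_scaling_bisymbols}, and using that the seminorm in the introduction only requires control of $\|a\|_\infty$ plus derivatives (so $\|a^t_{jk}\|_\infty\leq C\|a\|_\infty$), completes the proof. The only real technical point is verifying the intertwining identity $V\Op_\hbar(b)V^{-1}=\Op(\tilde b)$ together with the bookkeeping of Jacobian factors; once that is in hand, the seminorm estimate of $\tilde b$ is immediate from the chain rule, and the outer scaling has already been set up in this section. I do not anticipate any essential obstacle beyond this careful tracking of the two successive change-of-variable arguments.
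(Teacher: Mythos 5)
Your proposal is correct and matches the paper's proof step for step: the outer conjugation by $U^t_{jk}$ reduces to the symbol $a^t_{jk}$, the semiclassical dilation $V$ (the paper's $U_\hbar$ in Proposition \ref{prop_scaling_flat}) converts $\Op_\hbar$ to $\Op$ with each derivative bringing down a $\sqrt\hbar$, and Proposition \ref{prop_usual_bisymbols}(1) together with Lemma \ref{lemm_scaling_bisymbols} close the estimate. The paper merely packages the dilation step as the separate Proposition \ref{prop_scaling_flat}, which you inlined.
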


The basic fact is the proposition below, which is proved by the standard semiclassical scaling argument. (For the detail, see Appendix \ref{appendix_scaling}.)
\begin{prop}\label{prop_scaling_flat}
There exists a constant $C>0$ and $M\geq 0$ such that 
\[
\|\Op_\hbar(a)\|_{L^2(dq)\to L^2(dq)}\leq C\sum_{|A|+|B|+|A^\prime|\leq M} \hbar^{\frac{1}{2}(|A|+|B|+|A^\prime|)} \|\partial_q^A\partial_p^B\partial_{q^\prime}^{A^\prime}a\|_{L^\infty(\mathbb{R}^{3n})}
\]
holds for all $a\in BS^0_0(1)$ and $\hbar\in (0, 1]$. 
\end{prop}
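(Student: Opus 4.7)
The plan is to reduce the semiclassical estimate to the non-semiclassical bisymbol bound already recorded in Proposition \ref{prop_usual_bisymbols}(1) via a unitary change of scale on $L^2(dq)$.

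First I would introduce the unitary dilation $V_\hbar: L^2(dq)\to L^2(dy)$ given by $V_\hbar u(y):=\hbar^{n/4}u(\sqrt{\hbar}\,y)$. Starting from the identity
\[
\Op_\hbar(a)u(q)=\frac{1}{(2\pi\hbar)^n}\int\!\!\int a(q,\xi,q')\,e^{i\xi\cdot(q-q')/\hbar}u(q')\,d\xi\,dq',
\]
I would substitute $q=\sqrt{\hbar}\,y$, $q'=\sqrt{\hbar}\,y'$, $\xi=\sqrt{\hbar}\,\eta$ in the integral kernel and check directly that conjugation by $V_\hbar$ yields
\[
V_\hbar\,\Op_\hbar(a)\,V_\hbar^{-1}=\Op(\tilde a_\hbar),\qquad \tilde a_\hbar(y,\eta,y'):=a(\sqrt{\hbar}\,y,\sqrt{\hbar}\,\eta,\sqrt{\hbar}\,y').
\]
Since $V_\hbar$ is unitary, $\|\Op_\hbar(a)\|_{L^2\to L^2}=\|\Op(\tilde a_\hbar)\|_{L^2\to L^2}$.

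Next, since $a\in BS^0_0(1)$, the rescaled function $\tilde a_\hbar$ also lies in $BS^0_0(1)$ (with $\hbar$-dependent seminorms), so Proposition \ref{prop_usual_bisymbols}(1) applies and gives
\[
\|\Op(\tilde a_\hbar)\|_{L^2\to L^2}\leq C\,|\tilde a_\hbar|_{M,0,0}
\]
for some $C>0$ and $M\geq 0$ independent of $\hbar\in(0,1]$. A direct chain-rule computation yields
\[
\partial_y^A\partial_\eta^B\partial_{y'}^{A'}\tilde a_\hbar(y,\eta,y')=\hbar^{(|A|+|B|+|A'|)/2}(\partial_q^A\partial_p^B\partial_{q'}^{A'}a)(\sqrt{\hbar}\,y,\sqrt{\hbar}\,\eta,\sqrt{\hbar}\,y'),
\]
so that
\[
|\tilde a_\hbar|_{M,0,0}=\sum_{|A|+|B|+|A'|\leq M}\hbar^{(|A|+|B|+|A'|)/2}\|\partial_q^A\partial_p^B\partial_{q'}^{A'}a\|_{L^\infty(\mathbb{R}^{3n})},
\]
which is exactly the right-hand side of the desired inequality.

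The only subtle point is the bookkeeping of powers of $\hbar$ in the three changes of variable (in $q$, $q'$ and $\xi$) and in the unitary dilation; these must combine to cancel precisely, producing a plain $\Op$ in the $y$-variables with no residual $\hbar$ prefactor. Once that identity is established, the $L^2$ bound and the seminorm computation are routine, so I expect no real obstacle beyond keeping track of the Jacobian factors carefully.
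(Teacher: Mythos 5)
Your proposal is correct and takes essentially the same approach as the paper's own proof: both conjugate $\Op_\hbar(a)$ by the unitary dilation $U_\hbar f(q)=\hbar^{n/4}f(\hbar^{1/2}q)$ (your $V_\hbar$), identify the conjugated operator as $\Op(a_\hbar)$ with $a_\hbar(q,p,q')=a(\hbar^{1/2}q,\hbar^{1/2}p,\hbar^{1/2}q')$, compute the seminorm scaling, and invoke Proposition \ref{prop_usual_bisymbols}(1). The Jacobian bookkeeping you flag does cancel exactly, so there is no gap.
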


\begin{proof}[Proof of Proposition \ref{prop_scaling_curved}]
First, by Proposition \ref{prop_scaling_flat}, we obtain 
\begin{align*}
&\|\psi_j\Op_\hbar(a)\psi_k\|_{L^2(dq)\to L^2(dq)}=\|\Op_\hbar(a^t_{jk})\|_{L^2(dq)\to L^2(dq)} \\
&\leq C\sum_{|A|+|B|+|A^\prime|\leq M} \hbar^{\frac{1}{2}(|A|+|B|+|A^\prime|)} \|\partial_q^A\partial_p^B\partial_{q^\prime}^{A^\prime}a^t_{jk}\|_{L^\infty(\mathbb{R}^{3n})}. 
\end{align*}
The right hand side is smaller than
\[
C(\|a\|_{L^\infty(\mathbb{R}^{3n})}+\hbar^\frac{1}{2}|a|_{\lambda: M, 0, 0, t})
\]
by Lemma \ref{lemm_scaling_bisymbols}. This completes the proof. 
\end{proof}

\begin{theo}
There exist constants $C>0$ and $M\geq 0$ such that 
\[
\|\Op_\hbar(a)\|_{L^2(dq)\to L^2(dq)}\leq C(\|a\|_{L^\infty(\mathbb{R}^{3n})}+\hbar^\frac{1}{2}|a|_{\lambda: M, 0, 0, t})
\]
holds for all $t\in [0, 1]$, $a\in BS(\lambda; t)$, $j$, $k\in\mathbb{Z}$ and $\hbar\in (0, 1]$. 
\end{theo}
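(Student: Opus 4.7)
The plan is to run the Cotlar–Stein block decomposition exactly as in the proof of Theorem~\ref{thm_L2_bdd_bi}, feeding it the two semiclassical block estimates (Proposition~\ref{prop_scaling_curved} and the preceding off-diagonal decay proposition). The goal is to obtain a single block bound that is both $O(\jbracket{j-k}^{-N})$ in the block indices and linear in $\|a\|_{L^\infty}+\hbar^{1/2}|a|_{\lambda:M,0,0,t}$, and then sum via Cotlar–Stein.

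First, I set $A_{jk}:=\psi_j\Op_\hbar(a)\psi_k$ so that, formally, $\Op_\hbar(a)=\sum_{j,k}A_{jk}$. The support-disjointness argument used in the proof of Theorem~\ref{thm_L2_bdd_bi} carries over verbatim, because the bisymbol $a(q,\hbar p,q^\prime)$ still lies in $BS^0_0(\lambda;t)$ for every $\hbar\in(0,1]$ (with bisymbol seminorms bounded by those of $a$), so Proposition~\ref{prop_smoothness} still gives $\Op_\hbar(a),\Op_\hbar(a^\dagger):C_c^\infty(\mathbb R^n)\to C^\infty(\mathbb R^n)$. The same cutoff trick with $\chi_\varepsilon$ then yields, on $C_c^\infty(\mathbb R^n)$,
\[
A_{jk}A_{lm}^*=\mathbf 1_{\{|k-m|\le 1\}}A_{jk}A_{lm}^*,\qquad A_{jk}^*A_{lm}=\mathbf 1_{\{|j-l|\le 1\}}A_{jk}^*A_{lm}.
\]

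Second, I combine the two block estimates. For any integer $N\ge 1$ I claim that there exists $M_N$ such that
\[
\|A_{jk}\|_{L^2(dq)\to L^2(dq)}\le C_N\jbracket{j-k}^{-N}\bigl(\|a\|_{L^\infty(\mathbb R^{3n})}+\hbar^{1/2}|a|_{\lambda:M_N,0,0,t}\bigr)
\]
uniformly in $j,k\in\mathbb Z$, $t\in[0,1]$ and $\hbar\in(0,1]$. For $|j-k|\le 1$ the factor $\jbracket{j-k}^{-N}$ is bounded below, so this is Proposition~\ref{prop_scaling_curved}; for $|j-k|\ge 2$ it follows from the preceding off-diagonal decay proposition together with the trivial inequality $\hbar^N\le\hbar^{1/2}$ for $\hbar\in(0,1]$ and $N\ge 1$.

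Third, I apply the Cotlar–Stein lemma with $N=3$. Using the combined bound and the support-disjointness identities,
\[
\sum_{l,m}\|A_{jk}^*A_{lm}\|^{1/2}\le\sum_{\substack{l,m\\|j-l|\le 1}}\|A_{jk}\|^{1/2}\|A_{lm}\|^{1/2}\le C\bigl(\|a\|_{L^\infty}+\hbar^{1/2}|a|_{\lambda:M,0,0,t}\bigr),
\]
where the finite sum over $l$ (three values) and the convergent sum $\sum_m\jbracket{l-m}^{-3/2}$ produce a constant uniform in $(j,k)$; the estimate for $\sum_{l,m}\|A_{jk}A_{lm}^*\|^{1/2}$ is analogous. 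Cotlar–Stein then delivers the claimed operator-norm bound for $\Op_\hbar(a)=\sum_{j,k}A_{jk}$.

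The main obstacle is purely bookkeeping rather than conceptual: one must check that every step in the proof of Theorem~\ref{thm_L2_bdd_bi} survives the substitution $p\mapsto\hbar p$, in particular that the smoothness argument underlying the support-disjointness identities does not lose uniformity in $\hbar$. Since the $\hbar$-rescaled bisymbol stays in $BS^0_0(\lambda;t)$ with uniformly bounded seminorms for $\hbar\in(0,1]$, no new difficulty arises, and the whole estimate is governed by the pair of semiclassical block bounds already proved.
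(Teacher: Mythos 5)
Your proposal is correct, and it takes the route the paper clearly intends but does not write out: combine Proposition~\ref{prop_scaling_curved} (which controls all blocks, in particular the near-diagonal ones, by $\|a\|_{L^\infty}+\hbar^{1/2}|a|_{\lambda:M,0,0,t}$) with the preceding off-diagonal semiclassical decay estimate (absorbing $\hbar^N\le\hbar^{1/2}$ for $N\ge 1$) into a single block bound of the form $C_N\jbracket{j-k}^{-N}(\|a\|_{L^\infty}+\hbar^{1/2}|a|_{\lambda:M_N,0,0,t})$, and then run the Cotlar--Stein argument with the same support-disjointness identities used in Theorem~\ref{thm_L2_bdd_bi}. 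Your observation that the rescaled bisymbol $a(q,\hbar p,q')$ stays in $BS^0_0(\lambda;t)$ with seminorms uniformly bounded for $\hbar\in(0,1]$, so Proposition~\ref{prop_smoothness} and the $\chi_\varepsilon$ cutoff trick apply verbatim, correctly disposes of the only point that deserves checking.
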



\section{Construction of the parametrix of resolvents}\label{sec_parametrix}

\subsection{Composition with differential operators}

First we define differential operators acting on functions in terms of polar coordinates. We introduce coefficient function classes 
\[
\mathcal{B}(\Omega, \lambda):=\{\, a\in C^\infty(\Omega)\mid (\lambda^{-1}\partial_\theta)^\alpha\partial_r^{\alpha_0}a\in L^\infty(\Omega), \, \forall (\alpha_0, \alpha)\in \mathbb{Z}_{\geq 0}^n\,\}. 
\]
for an open subset $\Omega \subset \mathbb{R}^n$. If $\Omega=\mathbb{R}^n$, we denote $\mathcal{B}(\mathbb{R}^n, \lambda)$ as $\mathcal{B}(\lambda)$. Although we can equip these sets with a topological structure by seminorms $\| (\lambda^{-1}\partial_\theta)^\alpha\partial_r^{\alpha_0}a\|_{L^\infty}$, we do not use it in this paper. 

\begin{defi}\label{def_differential_operator}
Let $\Omega \subset \mathbb{R}^n$ be an open subset. A linear operator $P: C_c^\infty(\Omega)\to C_c^\infty(\Omega)$ is a differential operator belonging to the set $\mathrm{Diff}^m(\Omega, \lambda)$ if $P$ is the form 
\[
P=\sum_{|\Gamma|\leq m} a_\Gamma(q)(\lambda^{-1}D_\theta)^\gamma D_r^{\gamma_0}, \quad a_\Gamma \in \mathcal{B}(\Omega, \lambda). 
\]
We denote $\mathrm{Diff}^m(\mathbb{R}^n, \lambda)$ as $\mathrm{Diff}^m(\lambda)$. 
The principal symbol is defined as 
\[
\sigma(P)(q, p):=\sum_{|\Gamma|= m}a_\Gamma(q)\rho^{\gamma_0}\eta^\gamma 
\]
for $(q, p)\in \Omega\times\mathbb{R}^n$. 
\end{defi}

We add a suitable condition to $\lambda: \mathbb{R}\to (0, \infty)$. 

\begin{assump}\label{cond_w2}
$\lambda: \mathbb{R}\to (0, \infty)$ is a smooth function satisfying
\[
\partial_r^{j+1}\log \lambda(r)\in L^\infty(\mathbb{R}) \quad \text{for all } j\geq 0. 
\]
\end{assump}

It is easy to check that Assumption \ref{cond_w2} implies the condition (\ref{assump_lambda}). In following, we always assume that $\lambda: \mathbb{R}\to (0, \infty)$ satisfies Assumption \ref{cond_w2}. 

\begin{rema*}
If $\lambda$ satisfies Assumption \ref{cond_w2}, then $\mathrm{Diff}(\Omega, \lambda):=\cup_{m\geq 0} \mathrm{Diff}^m(\Omega, \lambda)$ forms an algebra with respect to the product defined as the composition of derivatives, with the property
\[
\mathrm{Diff}^{m_1}(\Omega, \lambda)\cdot \mathrm{Diff}^{m_2}(\Omega, \lambda)\subset \mathrm{Diff}^{m_1+m_2}(\Omega, \lambda) \quad \forall m_1, \, m_2\geq 0.
\]
\end{rema*}


We consider the composition of differential operators and pseudodifferential operators $\Op^1(b)$. In following we denote $\Op^1(b)$ as $\Op(b)$ for simplicity. 

\begin{prop}\label{prop_composition_do_pdo}
For $b\in S^m_\sigma(\lambda)$ and $u\in C_c^\infty(\mathbb{R}^n)$, we obtain 
\[
(\lambda^{-1}D_\theta)^\gamma D_r^{\gamma_0}\Op(b)u=\Op((\lambda^{-1} \eta+\lambda^{-1} D_\theta)^\gamma (\rho+D_r)^{\gamma_0}b)u. 
\]
\end{prop}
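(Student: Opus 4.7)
The plan is to prove the identity by differentiating under the integral sign in the oscillatory representation of $\Op(b)u$ and then carefully tracking how the differential operators act on the phase and on the symbol.

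First I would rewrite
\[
\Op(b)u(q)=\frac{1}{(2\pi)^n}\int_{\mathbb{R}^n} b(q,p)e^{ip\cdot q}\hat u(p)\,dp,
\]
by performing the inner $q'$-integration (with $u\in C_c^\infty$, one gets the Schwartz function $\hat u(p)=\int e^{-ip\cdot q'}u(q')\,dq'$). Polynomial bounds on the $q$-derivatives of $b$ (from $b\in S^m_\sigma(\lambda)$) combined with the rapid decay of $\hat u$ make the integrand dominated by an integrable function in $p$, uniformly for $q$ in compact sets, so differentiation in $r$ and $\theta$ under the integral is legitimate. This is essentially the argument already used in the proof of Proposition \ref{prop_smoothness}.

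Next I would apply the derivatives to the integrand. Since $D_r e^{ip\cdot q}=\rho e^{ip\cdot q}$ and $D_{\theta^j}e^{ip\cdot q}=\eta_j e^{ip\cdot q}$, the Leibniz rule yields $D_{r}\bigl(b(q,p)e^{ip\cdot q}\bigr)=\bigl((\rho+D_r)b\bigr)(q,p)\,e^{ip\cdot q}$ and analogously for $D_{\theta^j}$. Because the operators $\rho+D_r$ and $\eta_j+D_{\theta^j}$ (acting on the $q$-variable of the symbol) commute pairwise, iterating gives
\[
D_\theta^\gamma D_r^{\gamma_0}\bigl(b(q,p)e^{ip\cdot q}\bigr)=\bigl((\eta+D_\theta)^\gamma(\rho+D_r)^{\gamma_0}b\bigr)(q,p)\,e^{ip\cdot q},
\]
so that $D_\theta^\gamma D_r^{\gamma_0}\Op(b)u=\Op\bigl((\eta+D_\theta)^\gamma(\rho+D_r)^{\gamma_0}b\bigr)u$.

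Finally I incorporate the weight. Since $\lambda$ depends only on $r$, it commutes with $D_\theta$ and with multiplication by the coordinate $\eta$; hence $(\lambda^{-1}D_\theta)^\gamma=\lambda(r)^{-|\gamma|}D_\theta^\gamma$ as operators on functions of $q$, and
\[
\lambda(r)^{-|\gamma|}(\eta+D_\theta)^\gamma=(\lambda^{-1}\eta+\lambda^{-1}D_\theta)^\gamma
\]
as operators on $b(q,p)$. The factor $\lambda(r)^{-|\gamma|}$ depends only on the outer variable $q$, so it can be absorbed inside the $dp\,dq'$ integral, giving
\[
(\lambda^{-1}D_\theta)^\gamma D_r^{\gamma_0}\Op(b)u=\Op\bigl((\lambda^{-1}\eta+\lambda^{-1}D_\theta)^\gamma(\rho+D_r)^{\gamma_0}b\bigr)u,
\]
as required.

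There is no serious obstacle here; the only slightly delicate point is the justification of differentiation under the integral, which is handled exactly as in Proposition \ref{prop_smoothness} via $\hat u\in\mathscr{S}(\mathbb{R}^n)$ and the polynomial growth of $\partial_q^A b$.
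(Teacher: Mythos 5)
Your proof is correct and follows essentially the same route as the paper: the paper's entire proof consists of observing that the justification of differentiation under the integral sign given in Proposition \ref{prop_smoothness} applies, and you spell out the same Leibniz computation in detail (including the minor point that $\lambda(r)$ commutes with $D_\theta$ and $\eta$ so the weight can be distributed). Nothing is missing.
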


\begin{proof}
The proof of Proposition \ref{prop_smoothness} shows that we can change the order of integration and differentiation. 
\end{proof}

We can calculate a composition of differential operators and pseudodifferential operators by Proposition \ref{prop_composition_do_pdo}. Note that the composed symbol $(\lambda^{-1} \eta+\lambda^{-1} D_\theta)^\gamma (\rho+D_r)^{\gamma_0}b$ is equal to 
\[
\rho^{\gamma_0}(\lambda^{-1}\eta)^\gamma b
+\sum_{\substack{B\leq \Gamma \\ |B|\geq 1}} \rho^{\gamma_0-\beta_0}(\lambda^{-1}\eta)^{\gamma-\beta}(\lambda^{-1}D_\theta)^\beta D_r^{\beta_0} b
\]
and belongs to $S^{m+|\Gamma|}_\sigma(\lambda)$ if $b\in S^m_\sigma(\lambda)$. 
Hence if $P=\sum_{|\Gamma|\leq m}a_\Gamma (q)(\lambda^{-1}D_\theta)^\gamma D_r^{\gamma_0}$, then the symbol of $P\mathrm{Op}(b)$ is $L_0(P)b+L_1(P)b+L_2(P)b+\cdots+L_m(P)b$, where
\[
L_k(P)b:=\sum_{\substack{B\leq \Gamma \\ |B|=k \\ |\Gamma|\leq m}} a_\Gamma(q)\rho^{\gamma_0-\beta_0}(\lambda^{-1}\eta)^{\gamma-\beta}(\lambda^{-1}D_\theta)^\beta D_r^{\beta_0} b. 
\]
In particular, $L_0(P)$ is a multiplication operator
\[
L_0(P)=\sum_{|\Gamma|\leq m}a_\Gamma(q)\rho^{\gamma_0}(\lambda^{-1}\eta)^\gamma. 
\]
We put
\[
\widetilde{L}_0(P):=L_0(P)-\sigma(P). 
\]

\subsection{Local calculation of parametrix of resolvent}

For a complex number $z\in\mathbb{C}$, we define 
\[
\delta(z, \sigma(P)):=\inf_{q, p}|z-\sigma(P)(q, p)|. 
\]

If we take $z$ such that $\delta(z, \sigma(P))>0$, then $(z-\sigma(P))^{-1}=1/(z-\sigma(P))$ is well-defined as both function and multiplication operator. We add an ellipticity condition to differential operators in order to restrict the behavior of $(z-\sigma(P))^{-1}$ in a high energy area.  

\begin{defi}\label{def_elliptic}
Let $\Omega \subset \mathbb{R}^n$ be an open subset. A differential operator $P\in \mathrm{Diff}^m(\Omega, \lambda)$ is \textit{elliptic} if for all $z\in\mathbb{C}$ with $\delta(z, \sigma(P))>0$ there exists a constant $C>0$ such that 
\[
C^{-1}\jbracket{\rho\oplus \lambda(r)^{-1}\eta}^m\leq |z-\sigma(P)(q, p)|\leq C\jbracket{\rho\oplus \lambda(r)^{-1}\eta}^m
\]
holds for all $(q, p)\in \Omega\times \mathbb{R}^n$. 
\end{defi}

\begin{prop}\label{prop_parametrix}
Fix a function $\lambda: \mathbb{R}\to (0, \infty)$ satisfying Assumption \ref{cond_w2}. Let $P\in \mathrm{Diff}^m(\Omega, \lambda)$ be the form $P=\sum_{|\Gamma|\leq m}a_\Gamma (q)(\lambda^{-1}D_{\theta})^\gamma D_r^{\gamma_0}$. Suppose that $P$ is elliptic and $z\in\mathbb{C}$ satisfies $\delta(z, \sigma(P))>0$. We also take a function (of $q$) $\chi\in \mathcal{B}(\lambda)$ such that $\mathrm{supp}(\chi)\subset \Omega$. 

If we define symbols $b_0^N(z)=b_0(z)+b_1(z)+\cdots +b_N(z)$ by
\begin{align}
&b_0(z)(q, p):=\chi(q)(z-\sigma(P)(q, p))^{-1}, \nonumber \\
&b_j(z):=(z-\sigma(P))^{-1}\left( \widetilde{L}_0(P)b_{j-1}(z)+\sum_{0\leq k<j} L_{j-k}(P)b_k(z)\right) \quad \text{for } j\geq 1, \label{eq_bjs}
\end{align} 
then $b_j(z)\in S^{-m-j}_1(\lambda)$, $\mathrm{supp}(b_j(z))\subset \mathrm{supp}(\chi)\times \mathbb{R}^n$ and we have
\[
(z-P)\mathrm{Op}(b_0^N(z))u=\chi u+\Op(e_{N+1}(z))u
\]
for all $u\in C_c^\infty(\mathbb{R}^n)$ and some symbol $e_{N+1}(z, h)\in S^{-N-1}_1(\lambda)$ supported in $\mathrm{supp}(\chi)\times \mathbb{R}^n$ with estimate
\begin{equation}
|e_{N+1}(z)|_{\lambda: M, 0, 1, 1}\leq C\delta(z, \sigma(P))^{-\frac{N+1}{m}}\sum_{l=0}^{M+N+1}\sup_{q, p}\left(\frac{\jbracket{\rho\oplus \lambda^{-1}\eta}^m}{|z-\sigma(P)|}\right)^{l+1-\frac{N+1}{m}}  \label{ineq_remainder}
\end{equation}
for some $C=C_M>0$. 
\end{prop}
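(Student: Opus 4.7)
The proof splits into three steps: the symbol-class membership of each $b_j(z)$, the telescoping identity for $e_{N+1}$, and the $z$-dependent seminorm bound on the remainder.

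First I would show by induction on $j$ that $b_j(z)\in S^{-m-j}_1(\lambda)$ and $\mathrm{supp}(b_j(z))\subset\mathrm{supp}(\chi)\times\mathbb{R}^n$. The principal symbol $\sigma(P)$ lies in $S^m_1(\lambda)$: the coefficients $a_\Gamma\in\mathcal{B}(\lambda)$ have all $(\lambda^{-1}\partial_\theta)^\alpha\partial_r^{\alpha_0}$-derivatives bounded, while $\partial_\rho^{\beta_0}(\lambda\partial_\eta)^\beta$ lowers the order of $\rho^{\gamma_0}(\lambda^{-1}\eta)^\gamma$ by $|B|$. Together with the ellipticity $|z-\sigma(P)|\asymp\jbracket{\rho\oplus\lambda^{-1}\eta}^m$ and the Faà di Bruno formula applied to $1/(z-\sigma(P))$, this gives $b_0(z)=\chi(q)(z-\sigma(P))^{-1}\in S^{-m}_1(\lambda)$. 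For the inductive step, $\widetilde{L}_0(P)$ is a polynomial in $p$ of degree at most $m-1$ (hence in $S^{m-1}_1(\lambda)$), and $L_k(P)$ is a $k$-th order differential operator in $q$ whose coefficients are polynomials in $p$ of degree at most $m-k$; thus $\widetilde{L}_0(P)b_{j-1}$ and each $L_{j-k}(P)b_k$ lies in $S^{-j}_1(\lambda)$, and multiplying by $(z-\sigma(P))^{-1}\in S^{-m}_1(\lambda)$ yields $b_j\in S^{-m-j}_1(\lambda)$. The support condition is preserved because only multiplication by functions of $q$ and $q$-differentiations intervene.

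From Proposition \ref{prop_composition_do_pdo} together with $L_0(P)=\sigma(P)+\widetilde{L}_0(P)$,
\[
(z-P)\Op(b)=\Op\!\left((z-\sigma(P))b-\widetilde{L}_0(P)b-\sum_{l=1}^{m}L_l(P)b\right)
\]
on $C_c^\infty(\mathbb{R}^n)$. Applying this with $b=b_0^N(z)$, substituting the recursion (\ref{eq_bjs}) into each $(z-\sigma(P))b_j$ for $j\geq 1$, and reindexing the resulting double sums telescopes everything down to $\chi$ together with the leftover symbol
\[
e_{N+1}(z)=-\widetilde{L}_0(P)b_N-\sum_{l=1}^{m}L_l(P)b_N-\sum_{k=0}^{N-1}\sum_{l=N-k+1}^{m}L_l(P)b_k.
\]
The order counts of the first step show each summand belongs to $S^{-N-1}_1(\lambda)$ with support in $\mathrm{supp}(\chi)\times\mathbb{R}^n$.

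The delicate part is the seminorm estimate (\ref{ineq_remainder}). I would expand $\partial_q^A\partial_p^B b_j(z)$ via Leibniz and Faà di Bruno as a finite sum of terms of the form (derivatives of $\chi$ and $a_\Gamma$) $\cdot$ (product of derivatives of $\sigma(P)$) $\cdot\,(z-\sigma(P))^{-(l+1)}$, with $l$ in a range bounded by $j$, $|A|$, $|B|$. The pointwise bound $|\partial^{A'B'}\sigma(P)|\lesssim\jbracket{\rho\oplus\lambda^{-1}\eta}^{m-|B'|}$ dominates each such term by $\jbracket{\rho\oplus\lambda^{-1}\eta}^{-|B|}\cdot R^{l+1}$, where $R(q,p):=\jbracket{\rho\oplus\lambda^{-1}\eta}^m/|z-\sigma(P)|$ and the prefactor $\jbracket{\rho\oplus\lambda^{-1}\eta}^{-|B|}$ is precisely the weight required for the $S^0_1(\lambda)$-seminorm. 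To extract the factor $\delta(z,\sigma(P))^{-(N+1)/m}$, I would combine these bounds with the order gain $\jbracket{\rho\oplus\lambda^{-1}\eta}^{-(N+1)}$ inherent to $e_{N+1}\in S^{-N-1}_1(\lambda)$ via the identity
\[
\jbracket{\rho\oplus\lambda^{-1}\eta}^{-(N+1)}R^{l+1}=R^{l+1-(N+1)/m}|z-\sigma(P)|^{-(N+1)/m}\leq\sup_{q,p}R^{l+1-(N+1)/m}\cdot\delta(z,\sigma(P))^{-(N+1)/m},
\]
which upon summing over $l$ reproduces the right-hand side of (\ref{ineq_remainder}). The main obstacle is the combinatorial bookkeeping: verifying that after taking all $|A|+|B|+|A'|\leq M$ derivatives in the seminorm and collecting contributions from every summand of $e_{N+1}$, the values of $l$ that actually occur fill out exactly $\{0,1,\ldots,M+N+1\}$, so that the shifted exponent $l+1-(N+1)/m$ and the stated range in (\ref{ineq_remainder}) emerge cleanly from the above accounting.
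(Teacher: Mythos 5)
Your proposal follows the same route as the paper's proof: induct to show $b_j\in S^{-m-j}_1(\lambda)$, write out the telescoping identity to identify $e_{N+1}$, then expand derivatives of $b_j$ via Leibniz/Fa\`a di Bruno as finite sums of the form $(z-\sigma(P))^{-(l+1)}\cdot(\text{symbol of controlled order})$, and finally use $\jbracket{\rho\oplus\lambda^{-1}\eta}^{-(N+1)}R^{l+1}=R^{l+1-(N+1)/m}|z-\sigma(P)|^{-(N+1)/m}$ to peel off the $\delta(z,\sigma(P))^{-(N+1)/m}$ factor. The paper makes this explicit in Steps 1--5 by defining, for each $j$ and multi-index $\mathfrak{a}$, auxiliary $z$-independent symbols $c^j_{l,\mathfrak{a}}\in S^{m(l+1)-j-|B|}_1(\lambda)$ with $\partial_\lambda^{\mathfrak{a}}b_j(z)=\sum_{l}(z-\sigma(P))^{-l-2}c^j_{l,\mathfrak{a}}$, proving this by a double induction; that is exactly the ``combinatorial bookkeeping'' you flag as the main obstacle.

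One imprecision worth noting: in the middle paragraph you state the per-term bound as $\jbracket{\rho\oplus\lambda^{-1}\eta}^{-|B|}R^{l+1}$ and then propose to ``combine'' this afterwards with the extra $\jbracket{\rho\oplus\lambda^{-1}\eta}^{-(N+1)}$ decay of $e_{N+1}$. As written this is circular --- the $-(N+1)$ gain is not a separate ingredient you can multiply in afterwards; it has to come out of the expansion itself. Concretely, you must show that each term contributing to $\partial_\lambda^{\mathfrak{a}}e_{N+1}(z)$ is already of the form $(z-\sigma(P))^{-l-1}f_l$ with $f_l\in S^{m(l+1)-N-1-|B|}_1(\lambda)$ (uniform in $z$), which is what forces the $\jbracket{\cdot}^{-(N+1)}$ decay into the pointwise bound \emph{before} you apply your identity. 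The paper accomplishes this by applying $L_{j-k}(P)$ to the explicit decomposition of $b_k$ and re-collecting powers of $(z-\sigma(P))^{-1}$; this is precisely the step your sketch defers.
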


\begin{proof}
A direct computation shows that the symbol of $(z-P)\Op_h(b_0^N(z))$ is
\begin{align*}
&\sum_{k=0}^N \left((z-\sigma(P))b_k(z)-1_{\{k\geq 1\}}\widetilde{L}_0(P)b_{k-1}(z)-\sum_{j=0}^{k-1} L_{k-j}(P)b_j(z)\right) \\
&+e_{N+1}(z)
\end{align*}
if we put
\[
e_{N+1}(z):=-\widetilde{L}_0(P)b_N(z)-\sum_{\substack{j\leq m, k\leq N \\ j+k>N}} L_{j}(P)b_k(z). 
\]
We only have to solve the algebraic equations
\[
(z-\sigma(P))b_0(z)=\chi, \, (z-\sigma(P))b_j(z)-\widetilde{L}_0(P)b_{j-1}(z)-\sum_{0\leq k<j} L_{j-k}(P)b_k(z)=0
\]
for $1\leq j\leq N$ to obtain formulas for symbols. $\mathrm{supp}(b_j(z))\subset \mathrm{supp}(\chi)\times \mathbb{R}^n$ is obvious from these equations. 

We estimate the symbol $e_{N+1}(z)$. We introduce a shorthand notation 
\[
\partial_\lambda^\mathfrak{a}:=(\lambda^{-1}\partial_\theta)^\alpha (\lambda\partial_\eta)^\beta \partial_r^{\alpha_0}\partial_\rho^{\beta_0}
\]
for $\mathfrak{a}=(A, B)=(\alpha_0, \alpha, \beta_0, \beta)$.  We also abbreviate $L_k(P)$ to $L_k$.  First we record that the derivative of $(z-\sigma(P))^{-k}$ for $|B|\geq 1$ is the form 
\[
\partial_\lambda^\mathfrak{a}(z-\sigma(P))^{-k}=\sum_{l=0}^{|\mathfrak{a}|}(z-\sigma(P))^{-l-k}c_{kl, \mathfrak{a}}, 
\]
where $c_{kl, \mathfrak{a}}$ is 
\begin{align*}
c_{kl,\mathfrak{a}}(q, p)
&=\sum_{\substack{\mathfrak{a}=\mathfrak{a}_1+\cdots+\mathfrak{a}_l \\ |\mathfrak{a}_1|, \ldots, |\mathfrak{a}_l|\geq 1}} \partial_\lambda^{\mathfrak{a}_1}\sigma(P) \cdots\partial_\lambda^{\mathfrak{a}_l}\sigma(P) \\
&\in \sum_{\substack{\mathfrak{a}=\mathfrak{a}_1+\cdots+\mathfrak{a}_l \\ |\mathfrak{a}_1|, \ldots, |\mathfrak{a}_l|\geq 1}} 
S^{m-|B_1|}_1\cdots S^{m-|B_l|}_1 
\subset S^{ml-|B|}_1(\lambda). 
\end{align*}

We estimate the symbols $b_j(z)$ and $e_{N+1}(z)$ applying this fact. The procedure of estimate will be divided into 5 steps. 


\noindent \textbf{Step 1.} 
The derivative $\partial_\lambda^\mathfrak{a}b_0(z)$ is equal to 
\[
\sum_{\mathfrak{b}\leq \mathfrak{a}}
\begin{pmatrix}
\mathfrak{a} \\
\mathfrak{b}
\end{pmatrix}
\partial_\lambda^{\mathfrak{a}-\mathfrak{b}}\chi
\sum_{l=0}^{|\mathfrak{b}|}(z-\sigma(P))^{-l-1}c_{1l, \mathfrak{b}}
=\sum_{l=0}^{|\mathfrak{a}|}(z-\sigma(P))^{-l-1}c^0_{l, \mathfrak{a}}. 
\]
Here we put 
\[
c^0_{l, \mathfrak{a}}:=
\sum_{\substack{\mathfrak{b}\leq \mathfrak{a} \\ |\mathfrak{b}|\geq l}}
\begin{pmatrix}
\mathfrak{a} \\
\mathfrak{b}
\end{pmatrix}
c_{1l, \mathfrak{b}}
\partial_\lambda^{\mathfrak{a}-\mathfrak{b}}\chi 
\in S^{ml-l}_1(\lambda). 
\]
Thus 
\[
|\partial_\lambda^\mathfrak{a}b_0(z)|
\leq C_\mathfrak{a}|z-\sigma(P)|^{-1}\sum_{l=0}^{|\mathfrak{a}|}\frac{|c_{1l, \mathfrak{a}}|}{|z-\sigma(P)|^l}\leq C_\mathfrak{a}\delta(z, L_0)^{-1}\jbracket{\rho\oplus \lambda^{-1}\eta}^{-|B|}. 
\]
Hence
\[
|b_0(z)|_{\lambda: M, 0, 1, 1}=\sum_{|\mathfrak{a}|\leq M}\sup\jbracket{\rho\oplus \lambda^{-1}\eta}^{-|B|}|\partial_\lambda^\mathfrak{a} b_0(z)|
\leq C_M\delta(z, L_0)^{-1}. 
\]

\noindent \textbf{Step 2.} 
Next we show that $\partial_\lambda^\mathfrak{a}b_j(z)$ is the form
\begin{equation}
\partial_\lambda^\mathfrak{a}b_j(z)=\sum_{l=0}^{|B|+2j-1}(z-\sigma(P))^{-l-2}c^j_{l, \mathfrak{a}}, \label{derivative_bj}
\end{equation}
with symbols $c^j_{l, \mathfrak{a}}\in S^{m(l+1)-j-|B|}_1(\lambda)$ independent of $z$ by induction in $j\geq 1$. It is convenient to introduce more shorthand notations for representing differential operators $L_k$ as
\[
L_k=\sum_{\substack{\Gamma^\prime\leq \Gamma  \\ |\Gamma^\prime|=k \\ |\Gamma|\leq m}}a_\Gamma(q)p_\lambda^{\Gamma-\Gamma^\prime}D_{q\lambda}^{\Gamma^\prime}, \,\, 
p_\lambda^{\Gamma-\Gamma^\prime}=\rho^{\gamma_0-\gamma_0^\prime}(\lambda^{-1}\eta)^{\gamma-\gamma^\prime}, \,\, 
D_{q\lambda}^{\Gamma^\prime}=(\lambda^{-1}D_\theta)^{\gamma^\prime} D_r^{\gamma_0^\prime}. 
\]

We first calculate $b_1(z)$. 
\begin{align*}
&b_1(z)=(z-\sigma(P))^{-1}(\widetilde{L}_0(P)b_0(z)+L_1b_0(z)) \\
&=(z-\sigma(P))^{-1}\left(\widetilde{L}_0(P)b_0(z)+\sum_{\substack{\Gamma^\prime\leq \Gamma \\ |\Gamma^\prime|=1 \\ |\Gamma|\leq m}} a_\Gamma(q)p_\lambda^{\Gamma-\Gamma^\prime}D_{q\lambda}^{\Gamma^\prime} b_0(z)\right) \\ 
&=\sum_{l=0}^1(z-\sigma(P))^{-l-2}c^1_{l0}. 
\end{align*}
Here we defined 
\[
c^1_{l0}(q, p):=
\delta_{0l}\sum_{|\Gamma|< m}a_\Gamma(q)\rho^{\gamma_0}(\lambda^{-1}\eta)^\gamma
+\sum_{\substack{\Gamma^\prime\leq \Gamma \\ |\Gamma^\prime|=1 \\ |\Gamma|\leq m}} i^{-|\Gamma^\prime|}a_\Gamma(q)p_\lambda^{\Gamma-\Gamma^\prime} c^0_{l, \Gamma^\prime},
\]
\[
c^0_{l, \Gamma^\prime}:=c_{1l, (\Gamma^\prime, 0)}
\]
for $\Gamma^\prime\in \mathbb{Z}_{\geq 0}^n$ and $l\in \{0, 1\}$. 
It turns out that $c^1_{l0}\in S^{m(l+1)-1}_1(\lambda)$, since 
\[
a_\Gamma p_\lambda^{\Gamma-\Gamma^\prime}c^0_{l, \Gamma^\prime}\in S^{|\Gamma|-|\Gamma^\prime|+ml}_1\subset S^{(m-l^\prime)-(1-l^\prime)+ml}_1=S^{m(l+1)-1}_1
\]
if $|\Gamma^\prime|+l^\prime=1$ and $|\Gamma|+l^\prime\leq m$. The derivatives are calculated as 
\begin{align*}
\partial_\lambda^\mathfrak{a}b_1(z)
&=\sum_{l=0}^1 \sum_{\mathfrak{a}_1+\mathfrak{a}_2=\mathfrak{a}} C_{\mathfrak{a}_1\mathfrak{a}_2}\partial_\lambda^{\mathfrak{a}_1}(z-\sigma(P))^{-l-2}\partial_\lambda^{\mathfrak{a}_2}c^1_{l, 0} \\
&=\sum_{l=0}^1 \sum_{\mathfrak{a}_1+\mathfrak{a}_2=\mathfrak{a}} C_{\mathfrak{a}_1\mathfrak{a}_2}\sum_{l^\prime=0}^{|\mathfrak{a}_1|}(z-\sigma(P))^{-l-2-l^\prime}c_{l+1, l^\prime, \mathfrak{a}_1}\partial_\lambda^{\mathfrak{a}_2}c^1_{l, 0} \\
&=\sum_{l=0}^{|\mathfrak{a}|+1}(z-\sigma(P))^{-l-2}c^1_{l, \mathfrak{a}}. 
\end{align*}
Here we put
\[
c^1_{l\mathfrak{a}}(q, p):=\sum_{\substack{\mathfrak{a}_1+\mathfrak{a}_2=\mathfrak{a} \\ 0\leq l^\prime\leq |\mathfrak{a}_1| \\ l^\pprime\geq 0 \\ l^\prime+l^\pprime=l}}C_{\mathfrak{a}_1\mathfrak{a}_2}c_{l^\pprime+1, l^\prime, \mathfrak{a}_1}(q, p)\partial_\lambda^{\mathfrak{a}_2}c^1_{l^\pprime, 0}(q, p) 
\]
for $|\mathfrak{a}|\geq 1$. 
$c^1_{l, \mathfrak{a}}$ belongs to $S^{m(l+1)-1-|B|}_1$ since 
\[
c_{l^\pprime+1, l^\prime, \mathfrak{a}_1}\partial_\lambda^{\mathfrak{a}_2}c^0_{l^\pprime, \mathfrak{a}_2}
\in S^{ml^\prime-|B_1|+m(l^\pprime+1)-1-|B_2|}_1=S^{m(l+1)-1-|B|}_1
\]
if $l^\prime+l^\pprime=l$ and $B_1+B_2=B$. 

\noindent \textbf{Step 3.} 
Assume that this holds for all $j^\prime\leq j-1$. For simplicity, we abuse the notation $c^k_{l, A}:=c^k_{l, (A, 0)}$ for $A\in \mathbb{Z}_{\geq 0}^n$. 
We calculate
\begin{align*}
L_{j-k}b_k(z)
&=\sum_{\substack{\Gamma^\prime\leq \Gamma \\ |\Gamma^\prime|=j-k \\ |\Gamma|\leq m}} a_\Gamma(q)p_\lambda^{\Gamma-\Gamma^\prime} \sum_{l=0}^{|\Gamma^\prime|+2k-1}(z-\sigma(P))^{-l-2}c^k_{l, \Gamma^\prime} \\
&=\sum_{l=1}^{j+k}(z-\sigma(P))^{-l-1}\sum_{\substack{\Gamma^\prime\leq \Gamma\geq 0 \\ |\Gamma^\prime|=j-k \\ |\Gamma|\leq m}} a_\Gamma(q)p_\lambda^{\Gamma-\Gamma^\prime} c^k_{l-1, \Gamma^\prime}
\end{align*}
for $1\leq k\leq j-1$ and 
\begin{align*}
L_jb_0(z)
&=\sum_{\substack{\Gamma^\prime\leq \Gamma \\ |\Gamma^\prime|=j \\ |\Gamma|\leq m}} a_\Gamma (q)p_\lambda^{\Gamma-\Gamma^\prime} \sum_{l=0}^{|\Gamma^\prime|}(z-\sigma(P))^{-l-1}c^0_{l, \Gamma^\prime} \\
&=\sum_{l=0}^j(z-\sigma(P))^{-l-1}\sum_{\substack{\Gamma^\prime\leq \Gamma \\ |\Gamma^\prime|=j \\ |\Gamma|\leq m}} a_\Gamma(q)p_\lambda^{\Gamma-\Gamma^\prime} c^0_{l, \Gamma^\prime}
\end{align*}
for $k=0$. We sum these up over $0\leq k\leq j-1$ and a trivial identity $\widetilde{L}_0(P)=\widetilde{L}_0(P)$. The result is 
\[
b_j(z)=
\sum_{l=0}^{2j-1} (z-\sigma(P))^{-l-2} c^j_{l, 0}, 
\]
where 
\[
c^j_{0, 0}(q, p):=
\sum_{\substack{\Gamma^\prime\leq \Gamma \\ |\Gamma^\prime|=j \\ |\Gamma|\leq m \\ }} a_\Gamma (q)p_\lambda^{\Gamma-\Gamma^\prime} c^0_{0, \Gamma^\prime}
\]
and 
\begin{align*}
&c^j_{l, 0}(q, p):= \\
&1_{\{l\leq 2j-2\}}\sum_{|\Gamma|< m}a_\Gamma(q)\rho^{\gamma_0}(\lambda^{-1}\eta)^\gamma c^{j-1}_{l-1, 0}
+
1_{\{l\leq j\}}\sum_{\substack{\Gamma^\prime\leq \Gamma \\ |\Gamma^\prime|=j \\ |\Gamma|\leq m}} a_\Gamma (q)p_\lambda^{\Gamma-\Gamma^\prime} c^0_{l, \Gamma^\prime} \\
&+\sum_{\substack{1\leq k\leq j-1 \\ j+k\geq l}}\sum_{\substack{\Gamma^\prime\leq \Gamma   \\ |\Gamma^\prime|=j-k \\ |\Gamma|\leq m}} a_\Gamma (q)p_\lambda^{\Gamma-\Gamma^\prime} c^k_{l-1, \Gamma^\prime} 
\end{align*}
for $l\geq 1$. These symbols satisfy $c^j_{l, 0}\in S^{m(l+1)-j}_1(\lambda)$ for all $l=0, \ldots, 2j-1$. 

\noindent \textbf{Step 4.} 
Finally we calculate $\partial_\lambda^\mathfrak{a}b_j(z)$, $\mathfrak{a}\in \mathbb{Z}_{\geq 0}^n\times \{ B\}$. The method is same as that in the calculation of $\partial_\lambda^\mathfrak{a}b_1(z)$. The result is 
\begin{align*}
&\partial_\lambda^\mathfrak{a}b_j(z)=\sum_{l=0}^{|\mathfrak{a}|+2j-1}(z-\sigma(P))^{-l-2}c^j_{l, \mathfrak{a}}, \\
&c^j_{l, \mathfrak{a}}(q, p):=\sum_{\substack{\mathfrak{a}_1+\mathfrak{a}_2=\mathfrak{a} \\ 0\leq l^\prime\leq |\mathfrak{a}_1| \\ l^\pprime\geq 0 \\ l^\prime+l^\pprime=l}}C_{\mathfrak{a}_1\mathfrak{a}_2}c_{l^\pprime+2, l^\prime, \mathfrak{a}_1}(q, p)c^j_{l^\pprime, 0}(q, p)\in S^{m(l+1)-j-|B|}_1(\lambda). 
\end{align*}
Now the proof of (\ref{derivative_bj}) is completed. 

\noindent \textbf{Step 5.} 
Recall the definition of $e_{N+1}(z)$ 
\[
e_{N+1}(z)=-\widetilde{L}_0(P)b_N(z)-\sum_{\substack{j\leq m, k\leq N \\ j+k>N}} L_jb_k(z). 
\]

Since
\begin{align*}
L_jb_k(z)
&=\sum_{l=1}^{j+k}(z-\sigma(P))^{-l-1}\sum_{\substack{\Gamma^\prime\leq \Gamma \\ |\Gamma^\prime|=j \\ |\Gamma|\leq m}} a_\Gamma (q)p_\lambda^{\Gamma-\Gamma^\prime} c^k_{l-1, \Gamma^\prime} \\
&\in \sum_{l=1}^{j+k}(z-\sigma(P))^{-l-1}S^{m(l+1)-j-k}_1
\end{align*}
for $k\geq 1$ and 
\begin{align*}
L_jb_0(z)
&=\sum_{l=0}^j(z-\sigma(P))^{-l-1}\sum_{\substack{\Gamma^\prime\leq \Gamma \\ |\Gamma^\prime|=j \\ |\Gamma|\leq m}} a_\Gamma (q)p_\lambda^{\Gamma-\Gamma^\prime} c^0_{l, \Gamma^\prime} \\
&\in \sum_{l=0}^j(z-\sigma(P))^{-l-1} S^{m(l+1)-j}_1, 
\end{align*}
we obtain 
\begin{align*}
e_{N+1}(z) \in 
&\sum_{l=0}^{2N-1}(z-\sigma(P))^{-l-2}S^{m(l+1)-N+m-1}_1 \\
&+\sum_{\substack{j\leq m, k\leq N \\ j+k>N}} \left(1_{\{k\geq 1\}}\sum_{l=1}^{j+k}(z-\sigma(P))^{-l-1}S^{m(l+1)-j-k}_1\right. \\
&\qquad\qquad\quad\left.+\delta_{k0}\sum_{l=0}^j(z-\sigma(P))^{-l-1} S^{m(l+1)-j}_1\right) \\
\subset 
&\sum_{l=0}^{N^\prime}(z-\sigma(P))^{-l-1}S^{m(l+1)-N-1}_1. 
\end{align*}
Here $N^\prime:=\max\{ 2N-1, 1\}$. 
Note that we can more precisely denote $e_{N+1}(z)=\sum_l (z-\sigma(P))^{-l-1}f_l$ with $f_l\in S^{m(l+1)-N-1}_1$ independent of $z$. This implies that $e_{N+1}(z)\in S^{-N-1}_1(\lambda)$. The derivative of $e_{N+1}(z)$ is
\begin{align*}
\partial_\lambda^\mathfrak{a}e_{N+1}(z) 
&\in \sum_{l=0}^{N+1} \sum_{\mathfrak{a}_1+\mathfrak{a}_2=\mathfrak{a}}C_{\mathfrak{a}_1\mathfrak{a}_2}\sum_{l^\prime=0}^{|\mathfrak{a}_1|}(z-\sigma(P))^{-l^\prime-l-1}c_{l+1, l^\prime,  \mathfrak{a}_1}S^{m(l+1)-N-1-|B_2|}_1 \\
&\subset  \sum_{l=0}^{N+1} \sum_{\mathfrak{a}_1+\mathfrak{a}_2=\mathfrak{a}}\sum_{l^\prime=0}^{|\mathfrak{a}_1|}(z-\sigma(P))^{-l^\prime-l-1}S^{ml^\prime-|B_1|}_1S^{m(l+1)-N-1-|B_2|}_1 \\
&\subset \sum_{l=0}^{N+1+|\mathfrak{a}|}(z-\sigma(P))^{-l-1}S^{m(l+1)-N-1-|B|}_1
\end{align*}
We estimate $|\partial_\lambda^\mathfrak{a}e_{N+1}(z, h)|$ as
\begin{align*}
&\jbracket{\rho\oplus \lambda^{-1}\eta}^{|B|}|\partial_\lambda^\mathfrak{a}e_{N+1}(z, h)| \\
&\leq C_\mathfrak{a}\sum_{l=0}^{N+1+|\mathfrak{a}|} |z-\sigma(P)|^{-l-1}\jbracket{\rho\oplus \lambda^{-1}\eta}^{m(l+1)-N-1} \\
&\leq C_\mathfrak{a} \delta(z, \sigma(P))^{-\frac{N+1}{m}}\sum_{l=0}^{N+1+|\mathfrak{a}|}\sup_{q, p}\left(\frac{\jbracket{\rho\oplus \lambda^{-1}\eta}^m}{|z-\sigma(P)|}\right)^{l+1-\frac{N+1}{m}}. 
\end{align*}
This is the result which we want. 
\end{proof}

\subsection{Weighted quantization}

Let $g: \mathbb{R}^n\to (0, \infty)$ be a smooth function in $\mathcal{B}(\lambda)$ which has a positive infimum. Note that $g^a$ also belongs to $\mathcal{B}(\lambda)$ for all $a\in \mathbb{R}$. We introduce differential operators
\[
\mathcal{D}_r:=g^{-\frac{1}{4}}D_rg^\frac{1}{4}, \, \mathcal{D}_{\theta^j}:=g^{-\frac{1}{4}}D_{\theta^j}g^\frac{1}{4}. 
\]
Then $\mathcal{D}_r$ and $\lambda^{-1}\mathcal{D}_{\theta^j}$ are differential operators in $\mathrm{Diff}^1(\lambda)$. For general $P\in\mathrm{Diff}^m(\Omega, \lambda)$, $g^aPg^{-a}\in \mathrm{Diff}^m(\Omega, \lambda)$ and the (modified) principal symbol is invariant: $\sigma(P)=\sigma(g^aPg^{-a})$. 

\begin{coro}
Fix a positive function $\lambda(r)$ satisfying Assumption \ref{cond_w2}. Let $P\in \mathrm{Diff}^m(\lambda)$ be the form $P=\sum_{|\Gamma|\leq m}a_\Gamma (q)(\lambda^{-1}\mathcal{D}_{\theta})^\gamma \mathcal{D}_r^{\gamma_0}$. Suppose that $P$ is elliptic in $\Omega$ and $z\in\mathbb{C}$ satisfies $\delta(z, \sigma(P))>0$. We also take a function (of $q$) $\chi\in \mathcal{B}(\lambda)$ such that $\mathrm{supp}(\chi)\subset \Omega$. 

If we define symbols $b_N(z)=b_0(z)+b_1(z)+\cdots +b_N(z)$ by the same formulas as in (\ref{eq_bjs}), 
then $b_j(z)\in S^{-m-j}_1(\lambda)$, $\mathrm{supp}(b_j(z))\subset \mathrm{supp}(\chi)\times \mathbb{R}^n$ and we have
\[
(z-P)\Op^g(b_N(z))u=\chi u+\Op^g(e_{N+1}(z))u
\]
for all $u\in C_c^\infty(\mathbb{R}^n)$ and some symbol $e_{N+1}(z, h)\in S^{-N-1}_1(\lambda)$ supported in $\mathrm{supp}(\chi)\times \mathbb{R}^n$ with the estimate as in (\ref{ineq_remainder}). 
\end{coro}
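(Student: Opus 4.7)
The plan is to reduce the weighted statement to Proposition \ref{prop_parametrix} by conjugating with $g^{1/4}$. Set $\tilde{P}:=g^{1/4}Pg^{-1/4}$. The first step is to verify that
\[
\tilde{P}=\sum_{|\Gamma|\leq m}a_\Gamma(q)(\lambda^{-1}D_\theta)^\gamma D_r^{\gamma_0}\in \mathrm{Diff}^m(\lambda),
\]
so that $\tilde P$ fits the hypotheses of Proposition \ref{prop_parametrix}. This reduces to the identity
\[
g^{1/4}(\lambda^{-1}\mathcal{D}_\theta)^\gamma \mathcal{D}_r^{\gamma_0}g^{-1/4}=(\lambda^{-1}D_\theta)^\gamma D_r^{\gamma_0},
\]
which follows because $\lambda$ is a function of $r$ alone and therefore commutes with $g^{\pm 1/4}$, and because the definition $\mathcal{D}_r=g^{-1/4}D_rg^{1/4}$, $\mathcal{D}_{\theta^j}=g^{-1/4}D_{\theta^j}g^{1/4}$ produces a telescoping cancellation when taking products; the coefficients $a_\Gamma\in\mathcal{B}(\lambda)$ commute with $g^{1/4}$ since both are multiplication operators. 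In particular $\sigma(\tilde P)=\sigma(P)$, so $\tilde P$ is elliptic, and the operators $L_k(\tilde P)$, $\widetilde{L}_0(\tilde P)$ built from the $D$-form of $\tilde P$ coincide with those appearing in the recursion (\ref{eq_bjs}) for $P$. Hence the symbols $b_j(z)$ defined by the same formulas are simultaneously the parametrix symbols for $\tilde P$.

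Next, apply Proposition \ref{prop_parametrix} to $\tilde{P}$ with the same cutoff $\chi$. This yields $b_j(z)\in S^{-m-j}_1(\lambda)$ supported in $\mathrm{supp}(\chi)\times\mathbb{R}^n$ and a remainder $e_{N+1}(z)\in S^{-N-1}_1(\lambda)$ obeying the bound (\ref{ineq_remainder}), such that
\[
(z-\tilde P)\Op(b_0^N(z))v=\chi v+\Op(e_{N+1}(z))v
\]
for every $v\in C_c^\infty(\mathbb{R}^n)$.

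Finally, given $u\in C_c^\infty(\mathbb{R}^n)$, set $v:=g^{1/4}u$, which is again in $C_c^\infty(\mathbb{R}^n)$ because $g^{1/4}\in\mathcal{B}(\lambda)$ is smooth. Using $\Op^g(a)=g^{-1/4}\Op(a)g^{1/4}$ and the conjugation $g^{-1/4}(z-\tilde P)g^{1/4}=z-P$, we compute
\begin{align*}
(z-P)\Op^g(b_0^N(z))u
&=g^{-1/4}(z-\tilde P)\Op(b_0^N(z))v \\
&=g^{-1/4}\bigl(\chi v+\Op(e_{N+1}(z))v\bigr) \\
&=\chi u+\Op^g(e_{N+1}(z))u,
\end{align*}
which is the desired identity. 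The symbol estimate on $e_{N+1}(z)$ is the one supplied by Proposition \ref{prop_parametrix} and is preserved verbatim, since conjugation by $g^{\pm 1/4}$ does not alter the symbol itself.

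The only nontrivial point is the algebraic reduction in the first paragraph, namely checking that the $\mathcal{D}$-form of $P$ conjugates exactly to the $D$-form with identical coefficients; the rest is a direct application of Proposition \ref{prop_parametrix} plus the intertwining formula $\Op^g=g^{-1/4}\Op(\cdot)g^{1/4}$.
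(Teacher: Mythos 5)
Your proof is correct and follows exactly the route the paper sets up for this corollary: the intertwining identity $\Op^g(a)=g^{-1/4}\Op(a)g^{1/4}$ and the remark preceding the corollary (that $g^aPg^{-a}\in\mathrm{Diff}^m(\Omega,\lambda)$ with unchanged principal symbol) are precisely the ingredients you use, and the telescoping computation $g^{1/4}(\lambda^{-1}\mathcal{D}_\theta)^\gamma\mathcal{D}_r^{\gamma_0}g^{-1/4}=(\lambda^{-1}D_\theta)^\gamma D_r^{\gamma_0}$ is the only nontrivial check, which you carry out correctly (noting that $\lambda$, $a_\Gamma$, and $g^{\pm1/4}$ all commute as multiplication operators). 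The conjugation then transports Proposition \ref{prop_parametrix} verbatim, including the support and seminorm estimates on $b_j(z)$ and $e_{N+1}(z)$, since the symbols themselves are not altered by the conjugation.
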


\section{Resolvents of differential operators on manifolds with ends}\label{sec_ess_selfadj}

\subsection{Differential operators on manifolds with ends}

Let $\mathcal{M}$ be a manifold with ends. We take a finite atlas $\{\Psi_\iota: \mathcal{U}_\iota\to \mathcal{V}_\iota\}_{\iota\in I}$ as in introduction. We equip $\mathcal{M}$ with a positive function $\lambda: \mathcal{M}\to (0, \infty)$ which appears in the Assumption \ref{assump_metric_M}. Recall that for each $\iota\in I_\infty$, $\lambda_\iota:=(\Psi_\iota^{-1})^*\lambda$ is a function depending only on $r\in (1, \infty)$. 

\begin{defi}\label{def_elliptic_M}
An operator $\mathcal{P}: C_c^\infty(\mathcal{M})\to C_c^\infty(\mathcal{M})$ is an \textit{differential operator on} $\mathcal{M}$ \textit{of degree at most} $m$ if 
\begin{itemize}
\item $\mathrm{supp}(Pu)\subset \mathrm{supp}(u)$ for all $u\in C^\infty(\mathcal{M})$, 
\item for all $\iota\in I_\infty$, $(\Psi_\iota^{-1})^*\mathcal{P}\Psi_\iota^*\in \mathrm{Diff}^m(\mathcal{V}_\iota, \lambda_\iota)$, 
\item for all $\iota \in I_\mathcal{K}$, $(\Psi_\iota^{-1})^*\mathcal{P}\Psi_\iota^*\in\mathrm{Diff}^m(\mathcal{V}_\iota, 1)$. 
\end{itemize}
The set of all differential operators on $\mathcal{M}$ of degree at most $m$ is denoted as $\mathrm{Diff}^m(\mathcal{M}, \lambda)$. 
The principal symbol $\sigma(\mathcal{P}): T^*\mathcal{M}\to \mathbb{C}$ of $\mathcal{P}\in \mathrm{Diff}^m(\mathcal{M}, \lambda)$ is defined as 
\[
\sigma(\mathcal{P})(x, \xi):=\sigma(\Psi_\iota^*\mathcal{P}(\Psi_\iota^{-1})^*)(\tilde\Psi_\iota(x, \xi)). 
\]
Here $\iota \in I$ is an index such that $x\in \mathcal{U}_\iota$. $\sigma(\mathcal{P})$ is independent of the choice of such $\iota\in I$. $\tilde\Psi_\iota: T^*\mathcal{U}_\iota\to \mathcal{V}_\iota\times \mathbb{R}^n$ is the canonical coordinates associated with $\Psi_\iota: \mathcal{U}_\iota\to\mathcal{V}_\iota$. 
\end{defi}

We define the ellipticity of differential operators on $\mathcal{M}$ in coordinate-free terms. 

\begin{defi}
A differential operator $\mathcal{P}\in \mathrm{Diff}^m(\mathcal{M}, \lambda)$ is \textit{elliptic} if, for all $z\in\mathbb{C}$ with $\mathrm{dist}(z, \sigma(\mathcal{P})(T^*\mathcal{M}))>0$, there exists a constant $C>0$ such that the inequality
\[
C^{-1}(1+|\xi|_{g^{-1}})^m\leq |z-\sigma(\mathcal{P})(x, \xi)| \leq C(1+|\xi|_{g^{-1}})^m
\]
holds for all $(x, \xi)\in T^*\mathcal{M}$.  
\end{defi}

We can easily characterize the ellipticity of differential operators on $\mathcal{M}$ by coordinate-dependent terms. 

\begin{prop}
$\mathcal{P}\in \mathrm{Diff}^m(\mathcal{M})$ is elliptic if and only if $(\Psi_\iota^{-1})^*\mathcal{P}\Psi_\iota^*\in \mathrm{Diff}^m(\mathcal{V}_\iota, \lambda_\iota)$ is elliptic for all $\iota\in I_\infty$ and $(\Psi_\iota^{-1})^*\mathcal{P}\Psi_\iota^*\in \mathrm{Diff}^m(\mathcal{V}_\iota, 1)$ is elliptic for all $\iota\in I_\mathcal{K}$. 
\end{prop}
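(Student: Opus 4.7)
Since $\{\mathcal{U}_\iota\}_{\iota\in I}$ is a \emph{finite} atlas covering $\mathcal{M}$, the global ellipticity inequality
\[
C^{-1}(1+|\xi|_{g^{-1}})^m\leq|z-\sigma(\mathcal{P})(x,\xi)|\leq C(1+|\xi|_{g^{-1}})^m
\]
holds on $T^*\mathcal{M}$ with a uniform constant $C$ if and only if, after pulling back by each $\tilde\Psi_\iota$, the analogous inequality holds on $\mathcal{V}_\iota\times\mathbb{R}^n$ with a constant $C_\iota$ (take the max/min over finitely many $\iota$'s). Recalling that $\sigma(\mathcal{P})\circ\tilde\Psi_\iota^{-1}$ equals $\sigma\bigl((\Psi_\iota^{-1})^*\mathcal{P}\Psi_\iota^*\bigr)$ by Definition~\ref{def_elliptic_M}, the task reduces to comparing the coordinate expression $(1+|\xi|_{g^{-1}})^m$ with $\jbracket{\rho\oplus\lambda_\iota^{-1}\eta}^m$ (for $\iota\in I_\infty$) or with $\jbracket{p}^m$ (for $\iota\in I_\mathcal{K}$) appearing in Definition~\ref{def_elliptic}.

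For $\iota\in I_\mathcal{K}$, the set $\overline{\mathcal{V}_\iota}$ is compact and $(g_{\mu\nu})$ is smooth and positive definite there, so its eigenvalues are bounded above and away from zero. Hence $|\xi|_{g^{-1}}^2$ is equivalent to $|p|^2$ uniformly on $\mathcal{V}_\iota\times\mathbb{R}^n$, giving $(1+|\xi|_{g^{-1}})^m\asymp\jbracket{p}^m$, which matches the ellipticity in $\mathrm{Diff}^m(\mathcal{V}_\iota,1)$.

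For $\iota\in I_\infty$, the main step is the structural observation that Assumption~\ref{assump_metric_M} writes the metric matrix as
\[
\bigl(g_{\mu\nu}\bigr)=D_\iota(r)\,\bigl(G^\iota_{\mu\nu}(r,\theta)\bigr)\,D_\iota(r),\qquad D_\iota(r):=\mathrm{diag}(1,\lambda_\iota(r),\ldots,\lambda_\iota(r)),
\]
where $(G^\iota_{\mu\nu})$ is uniformly bounded and uniformly positive definite on $\mathcal{V}_\iota$. Consequently
\[
|\xi|_{g^{-1}}^2=\bigl(D_\iota^{-1}\xi\bigr)^{\!T}(G^\iota)^{-1}\bigl(D_\iota^{-1}\xi\bigr),
\]
and $D_\iota^{-1}(\rho,\eta)=(\rho,\lambda_\iota^{-1}\eta)$, so
\[
C^{-1}\bigl(\rho^2+\lambda_\iota(r)^{-2}|\eta|^2\bigr)\leq|\xi|_{g^{-1}}^2\leq C\bigl(\rho^2+\lambda_\iota(r)^{-2}|\eta|^2\bigr)
\]
uniformly on $\mathcal{V}_\iota\times\mathbb{R}^n$. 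Adding $1$ and raising to the $m$-th power yields $(1+|\xi|_{g^{-1}})^m\asymp\jbracket{\rho\oplus\lambda_\iota^{-1}\eta}^m$, again uniformly.

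Combining these equivalences, the pulled-back inequality on $\mathcal{V}_\iota\times\mathbb{R}^n$ with the weight $(1+|\xi|_{g^{-1}})^m$ is equivalent, up to adjusting the constant, to the coordinate ellipticity of Definition~\ref{def_elliptic} with the weight $\jbracket{\rho\oplus\lambda_\iota^{-1}\eta}^m$ (or $\jbracket{p}^m$ for $\iota\in I_\mathcal{K}$). Taking the maximum over the finite index set $I$ then gives both implications. The only nontrivial analytic point is the uniform comparability of $|\xi|_{g^{-1}}^2$ with $\rho^2+\lambda_\iota^{-2}|\eta|^2$ on the ends; this is where Assumption~\ref{assump_metric_M}(2) (the factorisation via $D_\iota$ together with the uniform bounds on $(G^\iota_{\mu\nu})$) enters and constitutes the heart of the argument.
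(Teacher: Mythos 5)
The paper states this proposition without proof, so there is nothing to compare against; your argument is the natural one and is correct in substance. The central observation — that Assumption~\ref{assump_metric_M}(2) factors the metric as $(g_{\mu\nu})=D_\iota(G^\iota_{\mu\nu})D_\iota$ with $D_\iota=\mathrm{diag}(1,\lambda_\iota,\dots,\lambda_\iota)$ and $(G^\iota_{\mu\nu})$ uniformly equivalent to the identity, so that $|\xi|_{g^{-1}}^2\asymp\rho^2+\lambda_\iota(r)^{-2}|\eta|^2$ uniformly on the end charts — is exactly what makes the two weights comparable, and the compactness argument for $\iota\in I_\mathcal{K}$ is routine.

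One point you gloss over: the set of admissible $z$ is not the same on both sides. Globally one quantifies over $z$ with $\mathrm{dist}(z,\sigma(\mathcal{P})(T^*\mathcal{M}))>0$, while locally one quantifies over $z$ with $\delta(z,\sigma(P_\iota))>0$, and since $\sigma(P_\iota)(\mathcal{V}_\iota\times\mathbb{R}^n)\subset\sigma(\mathcal{P})(T^*\mathcal{M})$ the local set of admissible $z$ is a priori larger. Thus the ``only if'' direction is not literally a pullback of the same inequality. To close this, observe that the upper bound $|z-\sigma(P_\iota)|\leq C\jbracket{\rho\oplus\lambda_\iota^{-1}\eta}^m$ is automatic from boundedness of the coefficients, and the essential content of ellipticity is the lower bound $|\sigma(P_\iota)(q,p)|\geq c\,\jbracket{\rho\oplus\lambda_\iota^{-1}\eta}^m$ for $|\rho\oplus\lambda_\iota^{-1}\eta|$ large, which is $z$-independent. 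Having that estimate for one admissible $z_0$ (coming from the global ellipticity and your weight comparability), one recovers it for any $z$ with $\delta(z,\sigma(P_\iota))>0$ by splitting into the bounded region (where $|z-\sigma(P_\iota)|\geq\delta(z,\sigma(P_\iota))$ suffices) and the unbounded region (where $|z-\sigma(P_\iota)|\geq|\sigma(P_\iota)|-|z|$ suffices). This is a standard remark but worth making explicit, since your phrasing ``if and only if the analogous inequality holds'' hides the asymmetry in the $z$-quantification.
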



\subsection{Proof of main theorem}

\begin{proof}[Proof of Theorem \ref{thm_parametrix}]
Let $\{\kappa_\iota\}_{\iota\in I}$ be a partition of unity subordinate to $\{\mathcal{U}_\iota\}_{\iota\in I}$ such that 
\begin{itemize}
\item $\sum_{\iota \in I}\kappa_\iota(x)^2=1$ for all $x\in \mathcal{M}$, 
\item for each $\iota\in I_\infty$, $\kappa_\iota(r, \sigma)=\kappa_\iota^S(\sigma)$ if $(r, \sigma)\in [2, +\infty)\times \mathcal{U}_\iota^\prime$. Here $\{\kappa_\iota^S\}_{\iota\in I_\infty}$ is a partition of unity subordinate to a finite atlas $\{\mathcal{U}_\iota^\prime\}_{\iota\in I_\infty}$ on $S$. 
\end{itemize}
Note that each $\kappa_\iota$ above is not necessarily compactly supported. 
$\tilde\kappa_\iota$ belongs to $\mathcal{B}(\lambda_\iota)$ since $\lambda_\iota$ satisfies $\liminf_{r\to \infty}\lambda_\iota(r)>0$ in Assumption \ref{assump_metric_M}. 

Put $P_\iota:=(\Psi_\iota^{-1})^*\mathcal{P}\Psi_\iota^*\in \mathrm{Diff}^m(\lambda_\iota)$ and $g_\iota(q):=\det(g_{\mu\nu}^\iota(q))_{\mu, \nu}$. We construct symbols $b^\iota_j(z)\in S^{-m-j}_1(\lambda_\iota)$ ($j=0, 1, \ldots , N$) such that $\mathrm{supp}(b^\iota_j(z))\subset \mathcal{V}_\iota \times \mathbb{R}^n$ and 
\[
(z-P_\iota)\Op^{g_\iota}(b^\iota_N(z))u=\kappa_\iota u+\Op^{g_\iota}(e^\iota_{N+1}(z))u
\]
for all $u\in C_c^\infty(\mathcal{V}_\iota)$. The symbol $e^\iota_{N+1}(z, h)\in S^{-N-1}_1(\lambda_\iota)$ has estimate
\[
|e^\iota_{N+1}(z)|_{\lambda: M, 0, 1, 1}\leq C\delta(z, \sigma(P_\iota))^{-\frac{N+1}{m}}\sum_{l=0}^{M+N+1}\sup_{q, p}\left(\frac{\jbracket{\rho\oplus \lambda_\iota^{-1}\eta}^m}{|z-\sigma(P_\iota)|}\right)^{l+1-\frac{N+1}{m}} 
\] 
for some $C=C_M>0$. $b_j(z)$ and $e_{N+1}(z)$ are supported in $\mathrm{supp}(\tilde\kappa_\iota)\times \mathbb{R}^n\subset \mathcal{V}_\iota\times \mathbb{R}^n$. 

We set 
\[
\mathcal{Q}^\iota_N(z):=\Psi_\iota^*\Op^{g_\iota}(b^\iota_N(z))\tilde\kappa_\iota(\Psi_\iota^{-1})^*: 
C_c^\infty(\mathcal{M})\to C^\infty(\mathcal{M}) 
\]
and 
\[
\mathcal{R}^\iota_N(z):=\Psi_\iota^*\Op^{g_\iota}(e^\iota_{N+1}(z))\tilde\kappa_\iota(\Psi_\iota^{-1})^*: 
C_c^\infty(\mathcal{M})\to C^\infty(\mathcal{M}).  
\]
Then 
\begin{equation}
(z-\mathcal{P})\mathcal{Q}_N^\iota(z)u=\tilde\kappa_\iota^2 u+\mathcal{R}_{N+1}^\iota(z)u. \label{eq_part_parametrix}
\end{equation}
The $L^2$ boundedness theorem \ref{thm_L2_bdd_simplest} gives us the estimate of remainder terms 
\begin{align*}
&\|\mathcal{R}^\iota_{N+1}(z)\|_{L^2(\mathcal{M}, g)\to L^2(\mathcal{M}, g)} \\
&\leq C\delta(z, \sigma(P_\iota))^{-\frac{N+1}{m}}\sum_{l=0}^{N+M}\sup_{(q, p)\in \mathcal{V}_\iota\times\mathbb{R}^n}\left(\frac{\jbracket{\rho\oplus \lambda_\iota^{-1}\eta}^m}{|z-\sigma(P_\iota)|}\right)^{l+1-\frac{N+1}{m}} \\
&\leq C\delta(z, \sigma(\mathcal{P}))^{-\frac{N+1}{m}}\sum_{l=0}^{N+M}\sup_{(x, \xi)\in T^*\mathcal{M}}\left(\frac{(1+|\xi|_{g^{-1}})^m}{|z-\sigma(\mathcal{P})|}\right)^{l+1-\frac{N+1}{m}}
\end{align*}
for some integer $M\in \mathbb{N}$. 

For $\iota\in I_\mathcal{K}$, we construct $L^2(\mathcal{M}, g)$ bounded operators $\mathcal{Q}^\iota_N(z)$ and $\mathcal{R}^\iota_N(z)$ satisfying the above equation (\ref{eq_part_parametrix}) by usual pseudodifferential operators. Thus summing (\ref{eq_part_parametrix}) over $\iota\in I$, we obtain 
\[
(z-\mathcal{P})\sum_{\iota\in I}\mathcal{Q}^\iota_N(z)u=u+\sum_{\iota\in I}\mathcal{R}^\iota_{N+1}(z)u. 
\]
Hence $\mathcal{Q}_N(z):=\sum_{\iota\in I}\mathcal{Q}^\iota_N(z): L^2(\mathcal{M}, g)\to L^2(\mathcal{M}, g)$ is an approximate resolvent and $\mathcal{R}_{N+1}(z):=\sum_{\iota\in I}\mathcal{R}^\iota_{N+1}(z): L^2(\mathcal{M}, g)\to L^2(\mathcal{M}, g)$ is the remainder term. 
\end{proof}

\subsection{Essential self-adjointness of elliptic differential operators}

As an application, we investigate fundamental properties of elliptic differential operators on manifolds with ends. In this section we focus on the essential self-adjointness of symmetric and elliptic differential operators. 

\begin{theo}
Suppose that $\mathcal{P}\in \mathrm{Diff}^m(\mathcal{M}, \lambda)$ is elliptic and symmetric in $L^2(\mathcal{M}, g)$. Then $\mathcal{P}$ is essentially self-adjoint in $L^2(\mathcal{M}, g)$. 
\end{theo}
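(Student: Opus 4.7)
The plan is to use the standard deficiency-index criterion: a densely defined symmetric operator is essentially self-adjoint if and only if, for some $\lambda\neq 0$, the range $\mathrm{Ran}(\mathcal{P}-i\lambda)|_{C_c^\infty(\mathcal{M})}$ is dense in $L^2(\mathcal{M},g)$. Orthogonality to this range is equivalent (by the formal self-adjointness of $\mathcal{P}$) to the distributional equation $(\mathcal{P}+i\lambda)v=0$ for $v\in L^2(\mathcal{M},g)$, so it suffices to show that any such $v$ vanishes. I will take $\lambda\gg 1$ and use Theorem \ref{thm_parametrix} to invert $i\lambda-\mathcal{P}$ modulo a remainder of small operator norm.

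Since $\mathcal{P}$ is symmetric, $\sigma(\mathcal{P})$ is real-valued, so $|i\lambda-\sigma(\mathcal{P})(x,\xi)|\geq\max(\lambda,|\sigma(\mathcal{P})(x,\xi)|)$ and $\delta(i\lambda,\sigma(\mathcal{P}))\geq\lambda$. Ellipticity applied at a fixed reference value yields $|\sigma(\mathcal{P})|\geq c(1+|\xi|_{g^{-1}})^m-C$, so $\sup_{(x,\xi)}(1+|\xi|_{g^{-1}})^m/|i\lambda-\sigma(\mathcal{P})|$ is bounded uniformly in $\lambda\geq 1$. Inserting these bounds into the estimate of Theorem \ref{thm_parametrix} gives $\|\mathcal{R}_{N+1}(i\lambda)\|_{L^2\to L^2}\leq C_N\lambda^{-(N+1)/m}\to 0$. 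Fix $\lambda$ so large that $\|\mathcal{R}_{N+1}(i\lambda)\|<1/2$; then $I+\mathcal{R}_{N+1}(i\lambda)$ is invertible on $L^2(\mathcal{M},g)$ via Neumann series. Classical local elliptic regularity, obtained in each chart from Proposition \ref{prop_parametrix}, also gives $v\in C^\infty(\mathcal{M})$, hence $\mathcal{P} v=-i\lambda v$ pointwise.

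The key computation is a cutoff argument. Choose $\chi_R\in C_c^\infty(\mathcal{M})$ equal to $1$ on $\mathcal{K}\cup\{r\leq R\}$ and supported in $\mathcal{K}\cup\{r\leq 2R\}$, of the form $\chi_R(r,\theta)=\chi(r/R)$ on the ends so that $|\partial_r^k\chi_R|=O(R^{-k})$. For $\phi\in C_c^\infty(\mathcal{M})$ the function $\chi_R\mathcal{Q}_N(i\lambda)\phi$ lies in $C_c^\infty(\mathcal{M})$, and the parametrix identity gives
\begin{equation*}
(\mathcal{P}-i\lambda)\bigl(\chi_R\mathcal{Q}_N(i\lambda)\phi\bigr)=-\chi_R(I+\mathcal{R}_{N+1}(i\lambda))\phi+[\mathcal{P},\chi_R]\mathcal{Q}_N(i\lambda)\phi.
\end{equation*}
Pairing with $v$ and using the orthogonality hypothesis yields
\begin{equation*}
\langle\chi_R(I+\mathcal{R}_{N+1}(i\lambda))\phi,v\rangle=\langle[\mathcal{P},\chi_R]\mathcal{Q}_N(i\lambda)\phi,v\rangle;
\end{equation*}
the left-hand side tends to $\langle(I+\mathcal{R}_{N+1}(i\lambda))\phi,v\rangle$ as $R\to\infty$ by dominated convergence.

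The main obstacle is showing that the right-hand side vanishes in the limit. The commutator $[\mathcal{P},\chi_R]$ is of order $m-1$ with coefficients supported on the annulus $A_R:=\{R\leq r\leq 2R\}$ and of size $O(R^{-1})$. Formal self-adjointness of $\mathcal{P}$ makes $[\mathcal{P},\chi_R]$ formally anti-self-adjoint, and the compact support of its coefficients together with the smoothness of both $\mathcal{Q}_N(i\lambda)\phi$ and $v$ on $A_R$ justifies the integration by parts
\begin{equation*}
\langle[\mathcal{P},\chi_R]\mathcal{Q}_N(i\lambda)\phi,v\rangle=-\langle\mathcal{Q}_N(i\lambda)\phi,[\mathcal{P},\chi_R]v\rangle,
\end{equation*}
so it suffices to show $\|[\mathcal{P},\chi_R]v\|_{L^2}\to 0$. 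A direct bound gives $\|[\mathcal{P},\chi_R]v\|_{L^2}\leq CR^{-1}\|v\|_{H^{m-1}(A_R)}$, and local elliptic regularity — whose constants can be made uniform in $R$ thanks to the bounded-geometry Assumption \ref{assump_metric_M} and the scaling arguments underlying Section \ref{sec_L2bdd} — combined with $\mathcal{P} v=-i\lambda v$ controls $\|v\|_{H^{m-1}(A_R)}$ by $C(1+\lambda)\|v\|_{L^2(A_R')}$ on a slightly thickened annulus; this tends to $0$ because $v\in L^2$. We conclude $\langle(I+\mathcal{R}_{N+1}(i\lambda))\phi,v\rangle=0$ for every $\phi\in C_c^\infty(\mathcal{M})$, whence by density $(I+\mathcal{R}_{N+1}(i\lambda))^*v=0$ in $L^2$; invertibility forces $v=0$, and the same argument with $-i\lambda$ in place of $i\lambda$ handles the other deficiency space.
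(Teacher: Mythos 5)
Your argument is correct in outline, but it takes a genuinely different route from the paper, and the route you chose requires an ingredient that the paper does not supply and that you invoke without proof.

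The paper proves density of $(iy-\mathcal{P})C_c^\infty(\mathcal{M})$ directly: given $u\in L^2$, it chooses $v$ with $(1+\mathcal{R}(iy))v\approx u$, then cuts off the (non-compactly-supported) parametrix $\mathcal{Q}(iy)v$ with $\chi(\delta r)$ and shows the resulting error is small. The crucial device there is to observe that $[\mathcal{P},\chi(\delta r)]\,\Op^{g_\iota}(b^\iota_0(iy))$ equals $\delta$ times a pseudodifferential operator with symbol in $S^{-1}_1(\lambda_\iota)$ that is \emph{uniformly bounded in $\delta$}, so Theorem~\ref{thm_L2_bdd_simplest} gives an $O(\delta)$ bound in $L^2$ operator norm, applied to a fixed test function. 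This keeps the whole estimate on the level of operator norms of pseudodifferential operators, which is exactly the machinery the paper has built.

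You instead pass to the orthogonal complement: you take $v\perp(\mathcal{P}-i\lambda)C_c^\infty$, derive the distributional equation $(\mathcal{P}+i\lambda)v=0$, integrate the commutator $[\mathcal{P},\chi_R]$ by parts onto $v$, and then need the bound $\|[\mathcal{P},\chi_R]v\|_{L^2}\le CR^{-1}\|v\|_{H^{m-1}(A_R)}\le CR^{-1}(1+\lambda)\|v\|_{L^2(A_R')}$ with constants \emph{uniform in $R$}. Two things are being used here that are not established in the paper. First, the smoothness of $v$: you assert it follows ``in each chart from Proposition~\ref{prop_parametrix},'' but that proposition constructs a parametrix modulo a remainder that is bounded but not smoothing, so a separate (standard, but unstated) bootstrap is needed to upgrade a distributional $L^2$ solution to a $C^\infty$ one. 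Second, and more seriously, the uniform interior elliptic estimate over the annuli $A_R$ on the ends is not a result in the paper; it is plausible by conjugating with the scaling operators $U^t_{jk}$ of Section~\ref{sec_L2bdd} (which turns the weighted $(\partial_r,\lambda^{-1}\partial_\theta)$-derivatives into ordinary ones on a uniformly bounded coefficient problem), but you would have to actually carry this out, paying attention to the fact that the relevant Sobolev norms must be the $\lambda$-weighted ones. The paper's choice of keeping the commutator composed with the explicitly controlled operator $\Op^{g_\iota}(b^\iota_0)$, rather than transferring it to $v$, is precisely what lets it avoid proving any a priori elliptic estimate for $v$ itself. So your proof is a valid alternative in spirit, but as written it has a real gap at the uniform elliptic regularity step; filling it is more work than the paper's direct argument, which uses only Theorem~\ref{thm_L2_bdd_simplest} and the structure of the parametrix symbol.
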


\begin{proof}
We prove that $(\pm iy-\mathcal{P})C_c^\infty(\mathcal{M})$ is dense in $L^2(\mathcal{M}, g)$ for $y\in\mathbb{R}$ such that $|y|\gg 1$. Construct a parametrix $\mathcal{Q}(z):=\mathcal{Q}_0(z)$ as in Theorem \ref{thm_parametrix}. Then 
\[
(iy-\mathcal{P})\mathcal{Q}(iy)u=u+\mathcal{R}(iy)u
\]
for all $u\in C_c^\infty(\mathcal{M})$. The remainder term $\mathcal{R}(iy)=\mathcal{R}_1(iy)$ has the estimate 
\[
\|\mathcal{R}(iy)\|_{L^2(\mathcal{M}, g)\to L^2(\mathcal{M}, g)}\leq C|y|^{-\frac{1}{m}}
\]
for all $|y|\geq 1$. This estimate is proved by the fact that the reality of principal symbol $\sigma(\mathcal{P})$ implies $|iy-\sigma(\mathcal{P})|\geq |i-\sigma(\mathcal{P})|$ and hence 
\[
\sup_{\substack{(x, \xi)\in \\ T^*\mathcal{M}}}\left(\frac{(1+|\xi|_{g^{-1}})^m}{|iy-\sigma(\mathcal{P})|}\right)^{l+1-\frac{1}{m}}\leq C
\]
for some $C>0$ independent of $y$. We take $|y|$ so large that 
\[
\|\mathcal{R}(iy)\|_{L^2(\mathcal{M}, g)\to L^2(\mathcal{M}, g)}<1. 
\]
Then $1+\mathcal{R}(iy)$ is invertible in $L^2(\mathcal{M}, g)$. 

Now we take an arbitrary $u\in L^2(\mathcal{M}, g)$ and $\varepsilon>0$. Since $1+\mathcal{R}(iy)$ is invertible and $C_c^\infty(\mathcal{M})$ is dense in $L^2(\mathcal{M}, g)$, we can take $v\in C_c^\infty(\mathcal{M})$ such that 
\[
\|u-(1+\mathcal{R}(iy))v\|_{L^2(\mathcal{M}, g)}< \frac{\varepsilon}{2}. 
\]

Next we take $\chi\in C_c^\infty(\mathbb{R})$ with $\chi(r)=1$ if $|r|<2$. For each $\iota\in I_\infty$ and $\delta>0$, we define 
\[
\mathcal{Q}^\iota_\delta(iy):=\Psi_\iota^*(\chi(\delta r)\Op^{g_\iota}(b^\iota_0(iy)))(\Psi_\iota^{-1})^*\kappa_\iota. 
\]
Then 
\begin{align*}
&\mathcal{P}\mathcal{Q}^\iota_\delta(iy)v-\mathcal{P}\mathcal{Q}^\iota(iy)v \\
&=\Psi_\iota^*([P_\iota, \chi(\delta r)]\Op^{g_\iota}(b^\iota_0(iy))-(1-\chi(\delta r))\Op^{g_\iota}(b^\iota_0(iy)))(\Psi_\iota^{-1})^*\kappa_\iota v. 
\end{align*}
The second term in the right hand side is easy to estimate as 
\[
\|\Psi_\iota^*((1-\chi(\delta r))\Op^{g_\iota}(b^\iota_0(iy)))(\Psi_\iota^{-1})^*\kappa_\iota v\|_{L^2(\mathcal{M}, g)}=o(1) \quad (\delta\to 0). 
\]
Thus 
\begin{align*}
&\|\mathcal{P}\mathcal{Q}^\iota_\delta(iy)v-\mathcal{P}\mathcal{Q}^\iota(iy)v\|_{L^2(\mathcal{M}, g)} \\
&\leq\|\Psi_\iota^*([P_\iota, \chi(\delta r)]\Op^{g_\iota}(b^\iota_0(iy)))(\Psi_\iota^{-1})^*\kappa_\iota v\|_{L^2(\mathcal{M}, g)}+o(1) \quad (\delta\to 0). 
\end{align*}

$[P_\iota, \chi(\delta r)]=\delta P^\prime_\iota(\delta)$ for some $P^\prime_\iota(\delta)=\sum_{|\Gamma|\leq m-1} a^\prime_{\Gamma, \delta}(q)(\lambda^{-1}\mathcal{D}_\theta)^\gamma \mathcal{D}_r^{\gamma_0} \in \mathrm{Diff}^{m-1}(\lambda_\iota)$ with coefficients $a^\prime_{\Gamma, \delta}\in \mathcal{B}(\lambda_\iota)$ uniformly bounded with respect to $\delta\ll 1$. Since $b^\iota_0(iy)\in S^{-m}_1(\lambda_\iota)$, the symbol of $P^\prime_\iota(\delta)\Op^{g_\iota}(b^\iota_0(iy))$ is in $S^{-1}_1(\lambda_\iota)\subset S^0_1(\lambda_\iota)$ and uniformly bounded with respect to $\delta\ll 1$. Thus 
\[
\|\Psi_\iota^*([P_\iota, \chi(\delta r)]\Op^{g_\iota}(b^\iota_0(iy)))(\Psi_\iota^{-1})^*\kappa_\iota v\|_{L^2(\mathcal{M}, g)}=O(\delta) \quad (\delta\to 0). 
\]
Hence if we define $\mathcal{Q}_\delta(iy):=\sum_{\iota\in I_\mathcal{K}}\mathcal{Q}^\iota(iy)+\sum_{\iota\in I_\infty} \mathcal{Q}^\iota_\delta(iy)$, then we have
\[
\|\mathcal{P}\mathcal{Q}_\delta(iy)v-\mathcal{P}\mathcal{Q}(iy)v\|_{L^2(\mathcal{M}, g)}\to 0 \quad (\delta\to 0). 
\]
This implies the existence of $\delta>0$ such that 
\[
\|\mathcal{P}\mathcal{Q}_\delta(iy)v-\mathcal{P}\mathcal{Q}(iy)v\|_{L^2(\mathcal{M}, g)}<\frac{\varepsilon}{2}. 
\]
We define $w:=\mathcal{Q}_\delta(iy)v\in C_c^\infty(\mathcal{M})$. Then $\mathcal{P}w$ is close to $u$ since
\[
\|\mathcal{P}w-u\|_{L^2(\mathcal{M}, g)}
\leq \|\mathcal{P}w-\mathcal{P}\mathcal{Q}(iy)v\|_{L^2(\mathcal{M}, g)}
+\|(1+\mathcal{R}(iy))v-u\|_{L^2(\mathcal{M}, g)}<\varepsilon. \qedhere
\]
\end{proof}

\begin{coro}
The Laplacian $-\triangle_g$ associated with $g$ is an elliptic differential operator in $\mathrm{Diff}^2(\mathcal{M}, \lambda)$ and essentially self-adjoint on $L^2(\mathcal{M}, g)$. 
\end{coro}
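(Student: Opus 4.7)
The plan is to verify the three hypotheses of the essential self-adjointness theorem just proved for $\mathcal{P}=-\triangle_g$: membership in $\mathrm{Diff}^2(\mathcal{M},\lambda)$, ellipticity, and symmetry on $L^2(\mathcal{M},g)$.

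I would first work in the end charts $\mathcal{U}_\iota$, $\iota\in I_\infty$; the compact piece is standard. By Assumption \ref{assump_metric_M} the pullback metric has the factorization
$$(g^\iota_{\mu\nu})=D_\iota(r)\,(G^\iota_{\mu\nu}(r,\theta))\,D_\iota(r),\quad D_\iota(r):=\mathrm{diag}(1,\lambda_\iota(r),\dots,\lambda_\iota(r)),$$
so $(g^{\iota,\mu\nu})=D_\iota^{-1}(\tilde G^\iota_{\mu\nu})D_\iota^{-1}$ with $(\tilde G^\iota_{\mu\nu}):=(G^\iota_{\mu\nu})^{-1}$. Uniform positive-definiteness of $(G^\iota_{\mu\nu})$ together with Cramer's rule and the product rule give $\tilde G^\iota_{\mu\nu}\in\mathcal{B}(\mathcal{V}_\iota,\lambda_\iota)$; similarly $\sqrt{\det g^\iota}=\lambda_\iota^{n-1}\sqrt{\det G^\iota}$ with $\sqrt{\det G^\iota}\in\mathcal{B}(\mathcal{V}_\iota,\lambda_\iota)$ bounded away from zero.

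Expanding
$$-\triangle_g u=-\frac{1}{\sqrt{\det g^\iota}}\,\partial_\mu\!\left(\sqrt{\det g^\iota}\,g^{\iota,\mu\nu}\partial_\nu u\right)$$
and pairing each $\partial_{\theta^j}$ with its natural prefactor $\lambda_\iota^{-1}$, every angular derivative appears in the combination $\lambda_\iota^{-1}D_{\theta^j}$. The only delicate terms are those in which $\partial_r$ hits a factor $\lambda_\iota^{\pm 1}$: these produce multiplicative factors $\partial_r\log\lambda_\iota$, whose $\partial_r$- and $\lambda_\iota^{-1}\partial_\theta$-derivatives are uniformly bounded by Assumption \ref{cond_w2}. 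Collecting terms places the operator in the required form
$$(\Psi_\iota^{-1})^*(-\triangle_g)\Psi_\iota^*=\sum_{|\Gamma|\le 2}a^\iota_\Gamma(q)(\lambda_\iota^{-1}D_\theta)^\gamma D_r^{\gamma_0},\quad a^\iota_\Gamma\in\mathcal{B}(\mathcal{V}_\iota,\lambda_\iota),$$
so $-\triangle_g\in\mathrm{Diff}^2(\mathcal{M},\lambda)$. Its principal symbol equals $|\xi|_{g^{-1}}^2\ge 0$, and for any $z\in\mathbb{C}\setminus[0,\infty)$ the two-sided bound required by Definition \ref{def_elliptic_M} is immediate, so $-\triangle_g$ is elliptic. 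Symmetry on $C_c^\infty(\mathcal{M})$ with respect to the natural volume $\sqrt{\det g}\,dq$ is the usual integration by parts on a Riemannian manifold. The previous essential self-adjointness theorem then delivers the conclusion.

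The only genuine obstacle is the calculational bookkeeping showing that the inverse matrix $(G^\iota_{\mu\nu})^{-1}$ together with every coefficient produced by the divergence-form expansion remains in $\mathcal{B}(\mathcal{V}_\iota,\lambda_\iota)$. This reduces to a routine induction using the product and chain rules, the uniform ellipticity of $(G^\iota_{\mu\nu})$, and the logarithmic-derivative bounds on $\lambda_\iota$ supplied by Assumption \ref{cond_w2}; the argument is tedious but presents no real difficulty.
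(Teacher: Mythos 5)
Your proposal is correct and takes the intended approach: verify that $-\triangle_g$ satisfies the three hypotheses of the preceding essential self-adjointness theorem. The paper states this corollary without proof, and your chart-by-chart computation via the factorization $(g^\iota_{\mu\nu})=D_\iota G^\iota D_\iota$, together with the observation that the coefficients produced by the divergence-form expansion only involve $\tilde G^\iota_{\mu\nu}$, $\sqrt{\det G^\iota}$ and bounded powers of $\partial_r\log\lambda_\iota$, supplies exactly the expected routine verification.
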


\appendix
\section*{Appendix}
\section{Facts on pseudodifferential operators with usual bisymbols}
\label{appendix_usual_psiDO}

\subsection{Proof of Proposition \ref{prop_usual_bisymbols}}
\begin{proof}[Proof of 1]
This is a well-known result. See, for instance, the textbook of Kumano-go \cite{Kumano-go81} (Combine Theorem 2.5 in Chapter 2 with Theorem 1.6 in Chapter 7). 
\end{proof}

\begin{proof}[Proof of 2]
Since $q\neq q^\prime$ on the support of $a(q, p, q^\prime)\chi_1(q)\chi_2(q^\prime)$, we can apply the integration by parts by a differential operator $-i(q-q^\prime)\cdot\partial_p/|q-q^\prime|^2$ and obtain
\[
\chi_1\Op(a)\chi_2u=\Op\left(\chi_1(q)\chi_2(q^\prime)\left(\frac{i(q-q^\prime)\cdot\partial_p}{|q-q^\prime|^2}\right)^Na(q, p, q^\prime)\right)u 
\]
for all $N\geq 0$ and $u\in C_c^\infty(\mathbb{R}^n)$. 

The derivative of the bisymbol is estimated as 
\begin{align*}
&\sum_{|A|+|B|+|A^\prime|\leq M}\left|\partial_q^A\partial_p^B\partial_{q^\prime}^{A^\prime}\left(\chi_1(q)\chi_2(q^\prime)\left(\frac{i(q-q^\prime)\cdot\partial_p}{|q-q^\prime|^2}\right)^Na(q, p, q^\prime)\right)\right| \\
&\leq C_{MN}|\chi_1|_M|\chi_2|_M\sum_{|B^\prime|=N}|\partial_p^{B^\prime}a|_{M, 0, 0}|q-q^\prime|^{-N}1_{\mathrm{supp}(\chi_1)}(q)1_{\mathrm{supp}(\chi_2)}(q^\prime) \\
&\leq C_{MN}\mathrm{dist}(\mathrm{supp}(\chi_1), \mathrm{supp}(\chi_2))^{-N}|\chi_1|_M |\chi_2|_M\sum_{|B^\prime|=N}|\partial_p^{B^\prime} a|_{M, 0, 0}  
\end{align*}
for all $M\geq 0$. 

Combining this estimate with the statement 1 in Proposition \ref{prop_usual_bisymbols}, we finish the proof. 
\end{proof}

\subsection{The semiclassical scaling}\label{appendix_scaling}
\begin{proof}[Proof of Proposition \ref{prop_scaling_flat}]
We define
\[
U_\hbar f(q):=\hbar^\frac{n}{4}f(\hbar^\frac{1}{2}q). 
\]
This is a unitary operator on $L^2(dq)$. For $a\in BS^0_0(1)$, 
\[
U_\hbar\Op_\hbar(a)U_h^{-1}=\Op(a_\hbar), 
\]
where
\[
a_\hbar(q, p, q^\prime):=a(\hbar^\frac{1}{2}q, \hbar^\frac{1}{2}p, \hbar^\frac{1}{2}q^\prime). 
\]
$a_\hbar$ is a bisymbol belonging to $BS^0_0(1)$ with 
\[
|a_\hbar|_{M, 0, 0, t}=\sum_{|A|+|B|+|A^\prime|\leq M} \hbar^{\frac{1}{2}(|A|+|B|+|A^\prime|)}
\|\partial_q^A\partial_p^B\partial_{q^\prime}^{A^\prime}a\|_{L^\infty(\mathbb{R}^{3n})}. 
\]
We apply the statement 1 in Proposition \ref{prop_usual_bisymbols} to $\Op(a_\hbar)$. 
\end{proof}

\subsection*{Acknowledgments}

I would like to thank Professor Shu Nakamura and Kenichi Ito for a lot of discussion and advice.



\begin{thebibliography}{19}
\bibitem{Bouclet11} J.-M. Bouclet, Semi-classical functional calculus on manifolds with ends and weighted $L^p$ estimates, Ann. Inst. Fourier (Grenoble) 61, no. 3 (2011), 1181-1223. 
\bibitem{Bouclet13} J.-M. Bouclet, Strichartz estimates on asymptotically hyperbolic manifolds, Anal. PDE 4, no. 1 (2011), 1-84. 
\bibitem{Calderon-Vaillancourt71} A.-P. Calder\'on, R. Vaillancourt, On the boundedness of pseudo-differential operators, J. Math. Soc. Japan 23 (1971), 374-378. 
\bibitem{Derezinski-Latosinski-Siemssen20} J. Derezi\'nski, A. Latosi\'nski, D. Siemssen, Pseudodifferential Weyl Calculus on (Pseudo-)Riemannian Manifolds, Ann. Henri Poincaré 21, no. 5 (2020), 1595-1635. 
\bibitem{DeWitt57} B. S. DeWitt, Dynamical Theory in Curved Spaces. I. A Review of the Classical and Quantum Action Principles, Rev. Mod. Phys. 29, no. 3 (1957), 377-397. 
\bibitem{Grosse-Schneider13} N. Grosse, C. Schneider, Sobolev spaces on Riemsnnian manifolds with bounded geometry: General coordinates and traces, Math. Nachr. 286, No. 16 (2013), 1586-1613. 
\bibitem{Guillemin-Sternberg13} V. Guillemin, S. Sternberg, \textit{Semi-classical analysis}, International Press, Boston (2013). 
\bibitem{Hormander85} L. H\"ormander, \textit{The Analysis of Linear Partial Differential Operators III}, Springer, Berlin, Heidelberg, New York, Tokyo (1985). 
\bibitem{Kumano-go81} H. Kumano-go, \textit{Pseudo-differential operators}, MIT Press (1981). 
\bibitem{Levy} C. Levy, Pseudodifferential Operators on Manifolds with Linearization, arXiv: 0811.1667 [math.FA]. 
\bibitem{Martinez02} A. Martinez, \textit{An Introduction to Semiclassical and Microlocal Analysis}, Springer-Verlag, Berlin (2002). 
\bibitem{Mazzeo-Melrose87} R. Mazzeo, R. B. Melrose, Meromorphic extension of the resolvent on complete spaces with asymptotically constant negative curvature, J. Funct. Anal. 75, no. 2, (1987), 260-310. 
\bibitem{Shubin92} M. A. Shubin, Spectral theory of elliptic operators on noncompact manifolds. M\'ethodes semi-classiques, Vol.1 (Nantes, 1991), Ast\'erisque no. 207 (1992), 35-108. 
\bibitem{Zworski12} M. Zworski, \textit{Semiclassical Analysis}, American Mathematical Society (2012). 
\end{thebibliography}
\end{document}